\documentclass[11pt, reqno]{amsart}
\usepackage{fullpage}

\newif\ifArxiv
\Arxivtrue

 \newif\ifHideFoot
\HideFoottrue

\ifHideFoot
\else
\usepackage[notref,notcite]{showkeys}
\fi

\usepackage{amssymb,amsmath,amsthm,amscd,mathrsfs,graphicx, color}
\usepackage[cmtip,all,matrix,arrow,tips,curve]{xy}
 \usepackage{hyperref}
\usepackage[usenames,dvipsnames]{xcolor}
\usepackage{xypic}
\hypersetup{colorlinks=true,citecolor=ForestGreen,linkcolor=Maroon,urlcolor=NavyBlue}
 \usepackage{mathtools}
\usepackage{mathpazo}
\usepackage[normalem]{ulem}
\usepackage{cleveref}
\usepackage{enumitem}
\usepackage{tikz-cd}
  
\numberwithin{equation}{section}
\newtheorem{teo}{Theorem}[section]
\newtheorem{pro}[teo]{Proposition}
\newtheorem{lem}[teo]{Lemma}

\newtheorem{teoalpha}{Theorem}

\theoremstyle{definition}

\newtheorem{dfn}[teo]{Definition}
\theoremstyle{remark}
\newtheorem{rem}[teo]{Remark}

\ifHideFoot

\newcommand{\Yano}[1]{}
\newcommand{\Shend}[1]{}

\else

\newcommand{\marg}[1]{\normalsize{{
\color{red}\footnote{{\color{blue}#1}}}{\marginpar[\vskip
-.25cm{\color{red}\hfill\tiny\thefootnote$\implies$}]{\vskip
-.2cm{\color{red}$\impliedby$\tiny\thefootnote}}}}}
\newcommand{\Yano}[1]{\marg{(Yano) #1}}
\newcommand{\Shend}[1]{\marg{(Shend) #1}}

\fi

\newcommand{\m}[1]{\mathcal{#1}}

\newcommand{\X}{\mathcal{X}}

\newcommand{\Z}{\mathcal{Z}}
\newcommand{\Y}{\mathcal{Y}}

\title[Positivity for Hodge modules on stacks]{Positivity in the context of Hodge modules and Higgs bundles on Deligne--Mumford stacks}

\author{Sebastian Casalaina-Martin}
\address{University of Colorado, Department of Mathematics, 
Boulder, CO 80309, USA }
\email{casa@math.colorado.edu}

\author{Shend Zhjeqi}
\address{University of Michigan, Department of Mathematics, 
Ann Arbor, MI 48109, USA }
\email{shendzh@umich.edu}

\thanks{Research of the first named author is supported in part by a grant from the Simons Foundation (SFI-MPS-TSM-00013682). The second named author was partially supported by the Simons Collaboration grant Moduli of Varieties.}

\date{\today}

\begin{document}

\begin{abstract}
We generalize positivity results due to Popa--Wu and Popa--Schnell for Hodge modules and Higgs bundles on smooth projective varieties to the case of smooth proper DM stacks admitting projective coarse moduli spaces.  This paper is the first in a series  aiming to generalize results of Popa--Schnell and Wei--Wu on Viehweg hyperbolicity to the setting of DM stacks, and in particular, to certain KSBA moduli spaces.  
\end{abstract}

\maketitle

\section*{Introduction}

In recent work, Popa--Schnell \cite{PS17} used certain positivity results for Hodge modules and Higgs bundles on smooth projective varieties \cite{PW16,PS17}  to study the log Kodaira dimension of smooth projective parameter spaces of  varieties.   These type of positivity results go back to  Griffiths' study of semi-positive metrics on vector bundles obtained as 
the lowest term in the Hodge filtration of a variation of Hodge structures, and have been generalized in a number of ways since then by 
Fujita  \cite{fujita_fiber_spaces},
Kawamata \cite{Kawamata},
Fujino--Fujisawa \cite{FF14variationsHMS}, and 
Fujino--Fujisawa--Saito  \cite{FFS14Remarks}. Zuo's work   \cite{Zuo00negativity}  
reframes these results in terms of  kernels of Kodaira--Spencer type maps associated to meromorphic connections with log-poles and unipotent monodromy, providing an approach that  has proven quite useful in applications, and was explored further in Brunebarbe  \cite{brunebarbe18}.  
    Strategies  for using these positivity results to study the geometry of base spaces of families of varieties were developed in the work of Viehweg--Zuo 
\cite{VZ01isotriv, VZ02, VZ03}, 
who, in particular,  use these techniques to establish the existence of what are now called Viehweg--Zuo sheaves.

To apply these techniques to study the log Kodaira dimension of moduli spaces, ideally one would like to have such techniques available for smooth algebraic stacks, and, at the very least, for  smooth proper DM stacks that admit projective coarse moduli spaces.  In this paper, we consider Hodge modules and Higgs bundles on such stacks, and generalize the positivity results of Popa--Wu and Popa--Schnell in this setting.   The present article is the first in a series with the aim of generalizing \cite{PS17} and \cite{WW23} to the case of Deligne--Mumford stacks, with the ultimate motivation being to show that certain KSBA moduli stacks, all of which are DM stacks, but not all of which are schemes, are of log general type.

To begin, one should specify what is meant by positivity in this setting.   We do this in \Cref{S:positivityI}.  In short, the  stacks we consider all admit finite flat covers from smooth projective varieties (see \S \ref{S:DM-intro}),  and a sheaf on such a  stack has the positivity property if the pull back  via this finite flat cover has the positivity property.   This turns out to be a convenient condition to work with, and implies that if a sheaf on the stack descends to a sheaf on the coarse moduli space with a  given positivity property, then the sheaf on the stack has that positivity property.  
 The next preliminary is to specify what is meant by   $D$-modules and Hodge modules on such stacks, which we do in \Cref{S:DM-HM-stacks}.
In general, if one wants to obtain a $6$-functor formalism, there are more sophisticated approaches to Hodge modules on stacks, e.g., \cite{tubach_2024}, or \cite{paulin13} for $D$-modules; however, for our purposes here, where we only need to pull back  pure polarizable Hodge modules along non-characteristic morphisms,  the more elementary approach we take here, closely following say the treatment of $D$-modules on stacks in \cite{BDstacks}, suffices.

Now suppose $\mathcal X$ is a smooth proper DM stack over $\mathbb C$ with projective coarse moduli space and 
  $\mathsf M$ is a pure polarizable Hodge module on $\mathcal X$ with associated filtered  regular holonomic left $D_{\mathcal X}$-module $(\mathcal M,F_\bullet )$.  In this situation we have a natural Kodaira--Spencer type $\mathcal O_{\mathcal X}$-module homomorphism 
$$
\theta_p:\operatorname{gr}_p^F\mathcal M\longrightarrow  \operatorname{gr}_{p+1}^F \mathcal M\otimes \Omega^1_{\mathcal X}.
$$
We denote the kernel by $\mathcal K_p(\mathsf M):=\ker\theta_p$, which is a coherent $\mathcal O_{\mathcal X}$-module. 
Our first result is a generalization of \cite[Thm.~A]{PW16}:

\begin{teoalpha}\label{TA:PW-TA}
 Let $\mathcal X$ be a smooth proper integral DM stack over $\mathbb C$ with projective coarse moduli space.  If $\mathsf M$ is a pure polarizable  Hodge module on $\mathcal X$ with strict support $\mathcal X$, then for all $p$, the torsion-free sheaf ${\mathcal K}_p(\mathsf M)^\vee$ is  weakly positive (\Cref{D:PotWP}).
\end{teoalpha}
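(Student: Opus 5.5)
The plan is to reduce to the scheme case \cite[Thm.~A]{PW16} by pulling back along a finite flat cover. By \S\ref{S:DM-intro} there is a finite flat surjective morphism $f\colon X\to\mathcal X$ from a smooth projective variety $X$. By \Cref{D:PotWP}, a sheaf on $\mathcal X$ is weakly positive exactly when its pullback $f^*$ is weakly positive on $X$. So it suffices to show that $f^*(\mathcal K_p(\mathsf M)^\vee)$ is weakly positive on $X$, and to identify this sheaf, at least generically, with $\mathcal K_p$ of a Hodge module on $X$, or with a sheaf built from one.

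Since $X$ and $\mathcal X$ are both smooth, a finite flat morphism between them is étale away from a proper closed subset, namely the branch divisor. After shrinking, I would first try to arrange that $f$ is étale: locally $\mathcal X=[U/G]$, and $U\to\mathcal X$ is étale. A global finite étale cover by a scheme need not exist, though, so in general $f$ is only flat and finite. An étale morphism is non-characteristic, so $f^*\mathsf M$ is a pure polarizable Hodge module on the étale locus. Over all of $X$, I would take $f^*\mathsf M$ to be the pullback along the non-characteristic locus, extended as the minimal extension with strict support $X$ (each component of $X$ maps dominantly, since $f$ is flat). Call this $\mathsf N$. Away from the ramification locus, $\operatorname{gr}^F_\bullet\mathcal N=f^*\operatorname{gr}^F_\bullet\mathcal M$ and $\Omega^1_X=f^*\Omega^1_{\mathcal X}$, and $\theta^{\mathsf N}_p=f^*\theta_p$. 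Because $f$ is flat, $f^*$ commutes with kernels, so $f^*\mathcal K_p(\mathsf M)\cong \mathcal K_p(\mathsf N)$ on a big open set, and hence on the whole étale locus.

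Next, apply \cite[Thm.~A]{PW16} on each component of $X$: $\mathcal K_p(\mathsf N)^\vee$ is weakly positive. Weak positivity only depends on the sheaf away from codimension $\geq 2$ and behaves well under generic isomorphisms in the right direction, so we need a map $\mathcal K_p(\mathsf N)^\vee\to f^*(\mathcal K_p(\mathsf M)^\vee)$ that is surjective, or at least generically surjective. A generically surjective map from a weakly positive sheaf gives weak positivity of the target, and reflexive hulls can be dropped since the statement concerns torsion-free duals. Alternatively, we need an inclusion $f^*\mathcal K_p(\mathsf M)\hookrightarrow\mathcal K_p(\mathsf N)$ whose dual is generically an isomorphism.

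The main obstacle is comparing the two sheaves along the ramification divisor of $f$. There $\Omega^1_X$ and $f^*\Omega^1_{\mathcal X}$ differ, and the minimal extension $\mathsf N$ may have extra filtration pieces. My first attempt is to exploit that $f^*\Omega^1_{\mathcal X}\hookrightarrow\Omega^1_X$ is injective and $f^*\operatorname{gr}^F\mathcal M\to\operatorname{gr}^F\mathcal N$ is defined, which would give $f^*\mathcal K_p(\mathsf M)\to\mathcal K_p(\mathsf N)$, generically an isomorphism. Dualizing then gives a generically surjective map the right way. If the second map is not available, I would instead use a local-chart argument. On étale charts $U\to\mathcal X$, the sheaves $\mathcal K_p$ pull back exactly, since $U\to\mathcal X$ is non-characteristic. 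I would then use the fibre product $X\times_{\mathcal X}U$, which is étale over $X$, to see that $f^*\mathcal K_p(\mathsf M)$ agrees with the analogous kernel for the honest non-characteristic pullback. With this, the only discrepancy is in codimension two or on a locus where the torsion-free dual is unaffected generically.
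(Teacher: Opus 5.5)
There is a genuine gap at the step where you say a map $f^*\operatorname{gr}^F_\bullet\mathcal M\to\operatorname{gr}^F_\bullet\mathcal N$ is defined. For an arbitrary finite flat cover this is false. Along the ramification divisor, $f$ is in general \emph{characteristic} for $\mathcal M$. In that case the naive pullback $(f^*\mathcal M,f^*F_\bullet)$ need not coincide with, or even map to, the filtered $D$-module of the minimal extension $\mathsf N$.

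Here is a local example. On a disc with coordinate $w$, let $\mathsf M$ be the minimal extension of the rank-one variation with monodromy $-1$. Then $\mathcal M=\mathcal O[w^{-1}]\,w^{1/2}$, with lowest Hodge piece $\mathcal O\cdot w^{-1/2}$; this is the anti-invariant part of $\pi_*\omega$ for $\pi(z)=z^2$, passed to left modules. For $f(z)=z^2$ one finds $f^*\mathcal M\cong\mathcal O[z^{-1}]$ and $f^*F_{\mathrm{low}}\mathcal M=\mathcal O\cdot z^{-1}$. On the other hand, $\mathsf N$ is the trivial Hodge module, with $F_{\mathrm{low}}\mathcal N=\mathcal O$. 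So no morphism $f^*F_{\mathrm{low}}\mathcal M\to F_{\mathrm{low}}\mathcal N$ restricts to the identity over $z\neq 0$. (Here $d_0f=0$, while $\operatorname{Ch}(\mathcal M)$ contains the cotangent fibre over $w=0$.)

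Your fallback does not fix this. The map $X\times_{\mathcal X}U\to U$ is a base change of $f$, so it is again a finite flat, generally ramified and characteristic map of varieties; the comparison problem is only moved to $U$. Also, the discrepancy lives on the ramification \emph{divisor}, not in codimension two, and weak positivity is sensitive there (think of twisting by $\mathcal O(\pm E)$). It might still be true that a generically isomorphic inclusion $f^*\mathcal K_p(\mathsf M)\hookrightarrow\mathcal K_p(\mathsf N)$ exists for arbitrary ramified covers. Proving that would need a real $V$-filtration analysis along the branch locus, which the proposal does not supply.

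The missing idea is to choose the cover so that this problem never arises. The paper proves (\Cref{T:nccover}) that there is a finite flat cover $q\colon V\to\mathcal X$ by a smooth projective variety that is non-characteristic for $\mathcal M$. It does this by rerunning the Kresch--Vistoli hyperplane-section construction with the Bertini-type \Cref{P:bertini}. For such $q$, \Cref{L:ffNCpbH} (via the uniqueness in \Cref{L:ExtFilt}) shows that the naive pullback $(q^*\mathcal M,q^*F_\bullet)$ is exactly the filtered $D$-module underlying the minimal extension on all of $V$. Hence $q^*\operatorname{gr}^F_p\mathcal M=\operatorname{gr}^F_p(q^*\mathcal M)$, even along the ramification.

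Your ``first attempt'' then goes through verbatim. Flatness and the injection $q^*\Omega^1_{\mathcal X}\hookrightarrow\Omega^1_V$ (which need not be surjective) give $q^*\mathcal K_p(\mathsf M)\hookrightarrow\mathcal K_p(q^*\mathsf M)$, generically an isomorphism. Dualizing, \cite[Thm.~A]{PW16} together with \Cref{L:surjWP} finishes the proof.
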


\Cref{TA:PW-TA} is proved in \Cref{T:PW-TA}.  The strategy of our proof is to show that there exists a finite flat non-characteristic cover of $\mathcal X$ with respect to $(\mathcal M,F_\bullet)$ by a smooth projective variety  (\Cref{T:nccover}), and to then reduce to \cite[Thm.~A]{PW16} on varieties (which in turn builds on results in \cite{Zuo00negativity, brunebarbe18})   
by pull back   along the finite flat non-characteristic morphism, using some results about positivity on DM stacks that we establish in \S\ref{S:positivityI}.  We believe the result on non-characteristic covers could be of independent interest; the argument is to show that the Bertini type argument in \cite[Thm.~2.1]{KV04}, establishing the existence of a finite flat cover of the stack by a smooth projective variety, can be modified to provide the non-characteristic cover. 
We also use a similar strategy to prove a related theorem, \Cref{T:PW-4.8},  on graded logarithmic Higgs bundles, which generalizes  \cite[Thm.~4.8]{PW16} (which, similarly, builds on results in \cite{Zuo00negativity, brunebarbe18}).

\medskip 
We use \Cref{TA:PW-TA} ($=$\Cref{T:PW-TA}) and \Cref{T:PW-4.8} to show that under certain hypotheses on a Hodge module and Higgs bundle on $\mathcal X$, one obtains Viehweg--Zuo sheaves; 
i.e., one obtains that  tensor powers of $\Omega^1_{\mathcal X}$, and certain  twists of those sheaves, contain subsheaves with big determinant.  The version for Higgs bundles is as follows, and generalizes \cite[Thm.~3.7]{PS17}:

\begin{teoalpha}\label{TA:PS-3.7/19.1}
Let $\mathcal X$ be a smooth proper integral DM stack over $\mathbb C$ with projective coarse moduli space, let $\mathcal D\subseteq \mathcal X$ be an snc divisor, and 
let $(\mathcal E_\bullet, \theta_\bullet)$ be a graded logarithmic Higgs bundle on $\mathcal X$ that 
extends a polarizable variation of Hodge structure of weight $\ell$ on $\mathcal X-\mathcal D$. 
If $\mathcal E_\bullet$ admits a large graded submodule for some divisor $\mathcal B\subseteq \mathcal D$ 
(\Cref{D:large-sub-Higgs}), 
then there exist a torsion-free coherent sheaf $\mathcal H$ on $\mathcal X$ with big determinant, 
and an integer  $s$ with  $1\le s\le \ell$,  together with an inclusion
\begin{equation}\label{E:TA:PS-3.7/19.1}
\xymatrix{\mathcal H \ar@{^(->}[r] & \left( \Omega^1_{\mathcal X}(\log \mathcal B)\right)^{\otimes s}.
}
\end{equation}
\end{teoalpha}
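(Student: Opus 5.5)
Following the strategy for \Cref{TA:PW-TA}, the plan is to reduce \Cref{TA:PS-3.7/19.1} to \cite[Thm.~3.7]{PS17}, but to carry out the Popa--Schnell argument directly on $\mathcal X$. The positivity input will be \Cref{T:PW-4.8} (the stacky version of \cite[Thm.~4.8]{PW16}), which already gives weak positivity on the stack, rather than passing everything to a cover.

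The steps are as follows.

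\textbf{Step 1 (set-up).} Let $\mathcal L\subseteq\mathcal E_0$ be the large graded submodule with respect to $\mathcal B$ from \Cref{D:large-sub-Higgs}. I would form the graded Higgs subsheaf $\mathcal F_\bullet\subseteq \mathcal E_\bullet\otimes\mathcal L^{-1}$ generated by $\mathcal O_{\mathcal X}$. Its graded pieces are images of the iterated Higgs fields
$$\theta^{(k)}:\ \mathcal O_{\mathcal X}\to \mathcal E_k\otimes\mathcal L^{-1}\otimes\big(\Omega^1_{\mathcal X}(\log\mathcal D)\big)^{\otimes k}.$$
Equivalently, they are images of maps $T^{\otimes k}_{\mathcal X}(-\log\mathcal D)\to \mathcal E_k\otimes\mathcal L^{-1}$.

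\textbf{Step 2 (positivity and bigness).} Apply \Cref{T:PW-4.8} to the kernels of $\theta_k$ restricted to $\mathcal F_\bullet$. Its dual is weakly positive on $\mathcal X$. Combining this with bigness of $\mathcal L$, as in \cite{PS17}, shows that for the largest $s$ with $\mathcal F_s\neq0$, the dual of the image sheaf has big determinant after twisting. Here $1\le s\le\ell$ because the grading has length at most $\ell+1$, and $s\ge1$ because $\mathcal L$ big forces $\theta$ to be nonzero.

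\textbf{Step 3 (replacing $\mathcal D$ by $\mathcal B$).} The largeness condition says the Higgs field has poles only along $\mathcal B$ on the relevant piece. This lets me replace $\log\mathcal D$ by $\log\mathcal B$, exactly as in \cite{PS17}. The output is an inclusion $\mathcal H\hookrightarrow(\Omega^1_{\mathcal X}(\log\mathcal B))^{\otimes s}$ with $\det\mathcal H$ big.

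I need stacky versions of the standard facts about weak positivity and bigness from \S\ref{S:positivityI}: tensor products, quotients, extensions, and determinants of weakly positive sheaves, and the statement that weakly positive plus big gives big. These should follow by pulling back along the finite flat cover $f\colon X\to\mathcal X$ from a smooth projective variety given in \S\ref{S:DM-intro}. For this I need that $f^*$ of a big line bundle is big and conversely. The converse holds because $\mathcal L$ is big on $\mathcal X$ exactly when its power descends to a big line bundle on the projective coarse space, and finite pullback preserves bigness. The construction of $\mathcal H$ is by images and kernels of $\mathcal O_{\mathcal X}$-linear maps, so it is étale local and makes sense on the stack.

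\textbf{Main obstacle.} The hardest step is the bigness argument in Step 2. In \cite{PS17} this uses that a big line bundle has a section vanishing on an ample-like divisor, via Kodaira's lemma. On $\mathcal X$ I would first try to prove a stacky Kodaira lemma: some power $\mathcal L^m$ is $\mathcal A\otimes\mathcal O(\mathcal E)$ with $\mathcal A$ the pullback of an ample line bundle from the coarse space and $\mathcal E$ effective. This should follow by descending $\mathcal L^{m}$ to the coarse space $X$ for $m$ divisible by the orders of the stabilizers, applying Kodaira there, and pulling back. With this in hand, the rest of the argument of \cite[Thm.~3.7]{PS17} should go through verbatim.
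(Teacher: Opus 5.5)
\textbf{Gap: misread hypothesis.} Your overall mechanism matches the paper's proof: weak positivity of a dual kernel from \Cref{T:PW-4.8}, tensor with a big line bundle, take the image in a tensor power of log forms, pass to determinants. The problem is that you have misread \Cref{D:large-sub-Higgs}, and that is exactly where your Step~3 fails. A large graded submodule is not a line bundle $\mathcal L\subseteq\mathcal E_0$. It is a graded subsheaf $\mathcal F_\bullet\subseteq\mathcal E_\bullet$ satisfying \eqref{E:FsubEprop}, i.e.\ stable under $\theta$ with log poles only along $\mathcal B$, together with a big line bundle $\mathcal A\subseteq\mathcal F_0$.

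In Step~1 you discard $\mathcal F_\bullet$ and generate a Higgs subsheaf from a big line bundle using the full Higgs field. That only yields $\mathcal H\hookrightarrow(\Omega^1_{\mathcal X}(\log\mathcal D))^{\otimes s}$, and a subsheaf of this has no reason to lie in $(\Omega^1_{\mathcal X}(\log\mathcal B))^{\otimes s}$. Moreover, the pole condition you invoke in Step~3 is not ``largeness'' but \eqref{E:FsubEprop}, which your construction never uses.

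\textbf{Repair.} Work inside $\mathcal F_\bullet$ from the outset. Iterating $\theta$ on $\mathcal A$ gives maps $\mathcal A\to\mathcal F_k\otimes(\Omega^1_{\mathcal X}(\log\mathcal B))^{\otimes k}$, each injective or zero because the targets are torsion-free. Since $\mathcal F_{\ell+1}=0$, there is a largest $s\le\ell$ with $\mathcal A\hookrightarrow\mathcal F_s\otimes(\Omega^1_{\mathcal X}(\log\mathcal B))^{\otimes s}$. Then $\mathcal A\subseteq\mathcal K_s(\mathcal F_\bullet)\otimes(\Omega^1_{\mathcal X}(\log\mathcal B))^{\otimes s}$, so $\log\mathcal B$ is built in and Step~3 is vacuous.

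\textbf{Smaller points.} First, \Cref{T:PW-4.8} concerns $\mathcal K_s(\mathcal E_\bullet)$, not the kernel on $\mathcal F_\bullet$, which does not underlie a variation of Hodge structure. You need $\mathcal K_s(\mathcal F_\bullet)\subseteq\mathcal K_s(\mathcal E_\bullet)$. This holds because $\mathcal F_{s+1}\otimes\Omega^1_{\mathcal X}(\log\mathcal B)\to\mathcal E_{s+1}\otimes\Omega^1_{\mathcal X}(\log\mathcal D)$ is injective. Then apply \Cref{L:surjWP} to the generically surjective dual map.

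That same positivity, not bigness alone, is what gives $s\ge 1$. If $s=0$, the nonzero map $\mathcal K_0(\mathcal F_\bullet)^\vee\to\mathcal A^{-1}$ is generically surjective. So $\mathcal A^{-1}$ is weakly positive, hence pseudo-effective (\Cref{R:WPosLB}), contradicting bigness of $\mathcal A$.

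Second, your ``main obstacle'' is not one. Weak positivity and bigness of sheaves on $\mathcal X$ are defined via a finite flat cover $q:V\to\mathcal X$, and big line bundles are detected on such a cover (\Cref{L:big}). Hence ``weakly positive $\otimes$ big is big'' is \Cref{L:pbig-prpty}\ref{L:pbig-prpty-wpb}, a direct pull-back of the variety statement. No stacky Kodaira lemma is needed; yours, via descent to the coarse space, is correct but superfluous.

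With these corrections your argument is the paper's. The image $\mathcal H$ of the nonzero map $\mathcal K_s(\mathcal F_\bullet)^\vee\otimes\mathcal A\to(\Omega^1_{\mathcal X}(\log\mathcal B))^{\otimes s}$ is big by \Cref{L:pbig-prpty}\ref{L:pbig-prpty-wpb} and \ref{L:pbig-prpty-surj}. Then $\det\mathcal H$ is big by \ref{L:pbig-prpty-det}.
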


This is proved in \Cref{T:PS-3.7/19.1} (see also \Cref{L:pbig-prpty}\ref{L:pbig-prpty-det} and \Cref{R:PThmB}).  Once one has the positivity results of \Cref{T:PW-4.8}, the proof  of  \Cref{T:PS-3.7/19.1} is essentially identical to that of \cite[Thm.~3.7]{PS17}, which in turn is based off a  strategy developed in \cite{VZ02}. 
  The related theorem for Hodge modules, as opposed to Higgs bundles, derived from \Cref{TA:PW-TA} ($=$\Cref{T:PW-TA})  and generalizing \cite[Thm.~3.5]{PS17}, is our \Cref{T:PS-3.5/18.4}.

We note that in the special case where $\mathcal X=[V/G]$ for some smooth projective variety $V$ and some finite group $G$ acting on $V$, then the main results in our paper (\Cref{TA:PW-TA} $=$ \Cref{T:PW-TA}, \Cref{T:PW-4.8}, \Cref{T:PS-3.5/18.4}, and \Cref{TA:PS-3.7/19.1} $=$ \Cref{T:PS-3.7/19.1}) follow essentially immediately from those in \cite{PW16, PS17} via the \'etale cover $V\to [V/G]$.  Note also that in the  case $\mathcal X=[V/G]$, Campana--P\u aun have shown that the cokernel of \eqref{E:TA:PS-3.7/19.1} has pseudo-effective determinant (the determinant of $\mathcal H$ is assumed to be big), so that taking determinants in \eqref{E:TA:PS-3.7/19.1} one obtains that $K_{\mathcal X}+\mathcal B$ is big.   In a forthcoming paper we extend the Campana--P\u aun result to all smooth proper integral DM stacks over $\mathbb C$ with projective coarse moduli space.

\subsection*{Acknowledgements}
The first named author thanks Mihnea Popa for  conversations on the topic, which led to this project.  The first named author also thanks Jonathan Wise and David Rydh for helpful conversations about the geometry of stacks. The second named author thanks his advisor, Mircea Musta\c{t}\u{a}, for useful discussions and all the support provided, including many helpful comments on an earlier draft of this paper.   The authors are also grateful to the organizers of the Simons Collaborations on Moduli of Varieties Workshop at the University of Utah in November 2024, where their  work on this project began.

\section{Preliminaries}

\subsection{Terminology}

We work over $\mathbb C$.  
A \emph{variety} is an integral separated scheme of finite type over $\mathbb C$. 
An \emph{alteration} $X'\to X$ is a surjective projective generically
finite   morphism of schemes over $\mathbb C$.  
We use the definition of a \emph{Deligne--Mumford (DM) stack} in \cite[Def.~4.1]{LMB}. Note that this differs from the definition in \cite{stacks-project} in that there is the additional hypothesis in \cite[Def.~4.1]{LMB} that the diagonal be representable, separated, and quasi-compact.  We direct the reader to \cite[App.~B]{CMW18} for a discussion of the relationship among various definitions of DM stacks in the literature (see in particular \cite[Fig.~1]{CMW18}).  We emphasize that, with the definition of DM stack that we are using, a morphism from a scheme to a DM stack is schematic (representable by schemes); see e.g., \cite[Lem.~B.20 and Lem.~B.12]{CMW18}.

\subsection{Structure of DM stacks}\label{S:DM-intro}

  The general set-up will be a smooth proper (resp.~separated) integral DM stack $\mathcal X$ of finite type over $\mathbb C$ with coarse moduli space $\pi: \mathcal X\to X$, with the added assumption that the algebraic space $X$ be a projective (resp.~quasi-projective) variety. 
Recall that such a stack admits a finite flat 
morphism $q:V\to \mathcal X$ from a smooth projective  (resp.~quasi-projective) 
variety $V$  (\cite[Thm.~1]{KV04} and \cite[Thm.~4.4]{kresch09}, see also \cite[\href{https://stacks.math.columbia.edu/tag/03B6}{\S 03B6}]{stacks-project}); note that $q$ is schematic and projective.  
In this situation we have that $X$ is normal, $\mathbb Q$-factorial, with at worst klt singularities.  The morphism $\pi:\mathcal X\to X$ is flat over the smooth locus of $X$; flatness is an \'etale local property, and so it suffices to consider the case $\mathcal X=[U/G]$ for some smooth variety $U$ and a finite group $G$.  Then, from say \cite[Cor.~14.12]{GW20}, it suffices to show that $U\to U/G$ is flat over the smooth locus of the quotient, which follows from the miracle of flatness \cite[Thm.~23.1, p.179]{matsumura}.

For brevity,  will say that such a stack $\mathcal X$ is a global finite quotient stack if there is a smooth projective (resp.~quasi-projective) variety $V$ over $\mathbb C$ and a finite algebraic group $G$ over $\mathbb C$ acting on $V$ such that $\mathcal X\cong [V/G]$; note that this implies that $X=V/G$.  For context, recall that  $\mathcal X$ is a global finite quotient stack  if and only if there exists a smooth projective (resp.~quasi-projective) variety  $V'$ and a finite \emph{\'etale} morphism $q':V'\to \mathcal X$ (see the \emph{proof} of \cite[Thm.~(6.1)]{LMB}).

\begin{dfn}
A morphism $f:\mathcal{X}\rightarrow \mathcal{Y}$ of DM stacks is said to be projective if it has a relatively ample line bundle.
\end{dfn}

\subsection{Line bundles}

We also recall \cite[Lem. 2.1.2]{AGV08} that for any line bundle $\mathcal L$ on $\mathcal X$, there is a natural number $e$ such that $\mathcal L^{\otimes e}$ is the pull back   via $\pi$ of a line bundle $M$ on $X$.
We also note that for any Zariski open substack $i:\mathcal U\hookrightarrow  \mathcal X$ of codimension at least two, the pull back   morphism
\begin{equation}\label{E:PicXU}
i^*:\operatorname{Pic}(\mathcal X)\to \operatorname{Pic}(\mathcal U)
\end{equation}
is an isomorphism.  Indeed, any line bundle on $\mathcal U$ extends uniquely to a line bundle on $\mathcal X$, since $\mathcal X$ is smooth (and so in particular, $S_2$).   For instance, considering the line bundle as a line bundle on an associated groupoid \cite[\href{https://stacks.math.columbia.edu/tag/03LH}{\S 03LH}]{stacks-project}, the result follows from the fact that on a smooth scheme,  line bundles and morphisms of line bundles extend uniquely over codimension-$2$ loci.

\subsection{Determinants of torsion-free coherent sheaves}
For a torsion-free coherent sheaf $\mathcal F$ on $\mathcal X$ we can define a determinant:
\begin{equation}\label{E:det-def}
\det \mathcal F :=\left(\bigwedge ^{\operatorname{rk}\mathcal F} \mathcal F\right)^{\vee \vee}.
\end{equation}
A key observation is that $\mathcal F$ is locally free over a Zariski open substack $i:\mathcal U\hookrightarrow \mathcal X$ with complement of codimension at least two.  Indeed, this is a local question, and so we can reduce to the question of torsion-free sheaves on smooth varieties.  Considering local rings of codimension-$1$, one can see that $\mathcal F$ is locally free in codimension-$1$.  Consequently, if $(i^*)^{-1}:\operatorname{Pic}(\mathcal U)\to \operatorname{Pic}(\mathcal X)$ is the inverse of \eqref{E:PicXU}, then 
\begin{equation}\label{E:detVBU}
\det \mathcal F = (i^*)^{-1}(\bigwedge^{\operatorname{rk} \mathcal F} \mathcal F|_{\mathcal U});
\end{equation}
 i.e., the determinant of $\mathcal F$ is the unique extension of the determinant of the vector bundle $\mathcal F|_{\mathcal U}$ over $\mathcal U$ to a line bundle on all of $\mathcal X$.

If $\mathcal X=X$ is a smooth projective variety, then this definition of determinant  clearly agrees with the one in \cite{KMdet}.   Moreover, if $q:V\to \mathcal X$ is a finite flat morphism from a smooth projective variety $V$, then $\det (q^*\mathcal F)=q^*\det (\mathcal F)$.

We can use \eqref{E:detVBU} to show that if 
$$
0\to \mathcal F'\to \mathcal F\to \mathcal F''\to 0
$$
is a short exact sequence of torsion-free coherent sheaves on $\mathcal X$, then 
$$
\det \mathcal F \cong \det \mathcal F' \otimes \det \mathcal F''.
$$
Indeed, there is a common Zariski open substack $i:\mathcal U\hookrightarrow \mathcal X$ of codimension at least two such that all of the sheaves are locally free.  Then one can use that the result holds clearly for short exact sequences of vector bundles, together with \eqref{E:detVBU}.

\section{Positivity on smooth DM stacks}\label{S:positivityI}

The goal of this section is to clarify what we  mean by certain notions of positivity in the setting of smooth DM stacks. As our goal is to eventually obtain results about the geometry of the coarse moduli space of the stack, our definitions are made with this in mind.  Throughout this section, we will assume that $\mathcal X$ is a smooth proper DM stack over $\mathbb C$ with projective coarse moduli space $\pi:\mathcal X\to X$.  Recall from \S \ref{S:DM-intro} that such stacks admit a finite flat morphism
$$
q:V\to \mathcal X
$$
from a smooth projective variety $V$.

\subsection{Nef line bundles}

We define nef line bundles on our stacks $\mathcal X$ as follows:

\begin{dfn}[Nef line bundles]
We say that a line bundle $\mathcal L$ on $\mathcal X$ is \emph{nef} if some positive tensor power of $\mathcal L$ descends to a nef line bundle on the coarse moduli space $X$. 
\end{dfn}

\begin{rem}[Warning on \emph{ample} terminology] 
To avoid possible confusion with various natural definitions of \emph{ample} line bundles on stacks, we will \emph{not} refer to a  line bundle 
$\mathcal L$ on $\mathcal X$, such that some positive tensor power of $\mathcal L$ descends to an ample line bundle on the coarse moduli space $X$, as an ample line bundle on $\mathcal X$. 
 
\end{rem}

We have the following equivalent characterizations:

\begin{lem}\label{L:ample} For a line bundle $\mathcal L$ on $\mathcal X$, the following are equivalent:
\begin{enumerate}[label=(\arabic*)]
\item  $\mathcal L$ is nef (resp.~has a positive tensor power that descends to an ample line bundle on $X$).

\item \label{L:ampleV}
There exists a finite flat morphism $q:V\to \mathcal X$ from a smooth projective variety $V$ such that the pull back   $q^*\mathcal L$ to $V$ is nef (resp.~ample).

\item \label{L:ampleV'} There exists a  finite surjective morphism $q':V'\to \mathcal X$ from a smooth projective variety $V'$ such that the line bundle $q'^*\mathcal L$ is nef  (resp.~ample).

\item \label{L:ample_allV'} For any  finite surjective  morphism $q':V'\to \mathcal X$ from a smooth projective variety $V'$,  the line bundle $q'^*\mathcal L$ is nef  (resp.~ample).  
\end{enumerate}
\end{lem}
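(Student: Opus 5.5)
The plan is to prove the cycle (1) $\Rightarrow$ (4) $\Rightarrow$ (2) $\Rightarrow$ (3) $\Rightarrow$ (1), handling the nef and ample cases in parallel.

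For (1) $\Rightarrow$ (4), let $q':V'\to\mathcal X$ be finite surjective with $V'$ smooth projective, and suppose $\mathcal L^{\otimes e}\cong \pi^*M$ with $M$ nef (resp.~ample) on $X$. Then $q'^*\mathcal L^{\otimes e}\cong (\pi\circ q')^*M$. The composite $\pi\circ q':V'\to X$ is proper and quasi-finite, since $\pi$ is a coarse moduli map and hence a universal homeomorphism. It is therefore finite. Pullback of a nef line bundle under any morphism of projective varieties is nef. Pullback of an ample line bundle under a finite morphism is ample. Finally, a line bundle is nef (resp.~ample) exactly when some positive power is, so $q'^*\mathcal L$ is nef (resp.~ample).

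The implication (4) $\Rightarrow$ (2) holds because a finite flat cover $q:V\to\mathcal X$ by a smooth projective variety exists (\S\ref{S:DM-intro}), and a finite flat map onto the integral stack is surjective. The implication (2) $\Rightarrow$ (3) is immediate.

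The main step is (3) $\Rightarrow$ (1). Let $q':V'\to\mathcal X$ be finite surjective with $q'^*\mathcal L$ nef (resp.~ample). By \cite[Lem.~2.1.2]{AGV08}, choose $e>0$ with $\mathcal L^{\otimes e}\cong\pi^*M$ for a line bundle $M$ on $X$. Set $f=\pi\circ q':V'\to X$, which is finite and surjective as above, and note $f^*M\cong q'^*\mathcal L^{\otimes e}$ is nef (resp.~ample). It remains to show that $M$ itself is nef (resp.~ample); this descent along a finite surjective map to the projective variety $X$ is the main obstacle.

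For nefness, take any irreducible curve $C\subseteq X$. Since $f$ is finite surjective, some irreducible curve $C'\subseteq V'$ maps onto $C$ with degree $d\ge1$. The projection formula gives $\deg(f^*M|_{C'})=d\,\deg(M|_C)$. The left side is $\ge0$, so $\deg(M|_C)\ge 0$ and $M$ is nef.

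For ampleness, I would use the fact that for a finite surjective morphism $f$ of proper schemes, $M$ is ample if and only if $f^*M$ is ample (e.g., Lazarsfeld, Cor.~1.2.28, or Hartshorne's criterion via Nakai--Moishezon). To see this directly, let $Z\subseteq X$ be a subvariety of dimension $k$ and choose $Z'\subseteq V'$ mapping finitely onto $Z$. Then $(f^*M)^k\cdot Z'=\deg(Z'/Z)\,(M^k\cdot Z)$, which is positive, so Nakai--Moishezon on the projective variety $X$ shows $M$ is ample. Hence some positive power of $\mathcal L$ descends to a nef (resp.~ample) line bundle on $X$.
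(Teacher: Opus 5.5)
Your proof is correct and takes essentially the same route as the paper: you write $\mathcal L^{\otimes e}\cong\pi^*M$ and compare $M$ with its pullback under the finite surjective map $\pi\circ q'$. The paper simply cites Lazarsfeld (Lem.~1.2.13, Cor.~1.2.28, Exa.~1.4.4) for the facts you verify directly via the projection formula and Nakai--Moishezon, and you also supply details the paper leaves implicit: that $\pi\circ q'$ is finite, and that the finite flat cover is surjective.
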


\begin{proof}
Since $\mathcal X$ admits a finite flat morphism $q:V\to \mathcal X$ from a smooth projective variety $V$, to prove the lemma,
it suffices to show that if $q':V'\to \mathcal X$ is a  finite  surjective  morphism from a smooth projective variety $V'$,  then $\mathcal L$ is nef  (resp.~has a positive tensor power that descends to an ample line bundle on $X$)  if and only if  $q'^*\mathcal L$ is nef (resp.~ample).  
To fix notation, let us assume that $\mathcal L^{\otimes e}\cong \pi^*M$  for some natural number $e$ and some line bundle $M$ on $X$.  It suffices to show that $M$ is nef (resp.~ample) if and only if $(\pi \circ q')^*M$ is nef (resp.~ample).  
 Note now use \cite[Lem.~1.2.13 and Cor.~1.2.28]{Laz04I} and \cite[Exa.~1.4.4]{Laz04I}. 
\end{proof}

\subsection{Big and pseudoeffective line bundles}\label{S:big-ps-e-lb}
We start by recalling a few facts about big and pseudoeffective line bundles on normal projective varieties.  
 To begin, recall  the numerical dimensions $\kappa_\sigma$ defined in \cite[p.6 and Rem. 2.6]{CHMS14} for line bundles on normal projective varieties.  The numerical dimension 
  $\kappa_\sigma$ is stable under pull back by alterations (\cite[Prop.~2.7]{CHMS14}). Recall that a line bundle $L'$ on a smooth projective variety $X'$ is big (resp.~pseudoeffective) if and only if $\kappa_\sigma(L')=\dim X'$  (resp.~$\geq 0$) \cite[p.7]{CHMS14} 
  (resp.~\cite[Def.~2.2.25]{Laz04I} and \cite[Prop.~2.8]{CHMS14}).  
   In fact, this implies that on a normal projective variety $X'$, then   $L'$  is big (resp.~pseudoeffective)  if and only if $\kappa_\sigma(L')=\dim X'$  (resp.~$\geq 0$); this is essentially \cite[Rem.~2.9]{CHMS14}.  More precisely if $\tau':X''\to X'$ is a resolution of singularities, 
  then we have that $L'$ on $X'$ is big (resp.~pseudoeffective) if and only if $\tau'^*L'$ is big (resp.~pseudoeffective), e.g., \cite[p.67]{KM98}, or \cite[p.148, Cor.~2.2.45, and Prop.~2.2.43]{Laz04I}; then use that $\kappa_\sigma$ is stable under pull back   by alterations.

We define big and pseudoeffective line bundles on our stacks $\mathcal X$ as follows:

\begin{dfn}[Big and pseudoeffective line bundles]
We say that a line bundle $\mathcal L$ on $\mathcal X$ is \emph{big}  (resp.~\emph{pseudoeffective})  if some positive tensor power of $\mathcal L$ descends to a big (resp.~pseudoeffective)  line bundle on the coarse moduli space $X$.  
\end{dfn}

We have the following equivalent characterizations:

\begin{lem}\label{L:big} For a line bundle $\mathcal L$ on $\mathcal X$, the following are equivalent:
\begin{enumerate}[label=(\arabic*)]
\item  $\mathcal L$ is big (resp.~pseudoeffective).

\item \label{L:bigV} There exists a finite flat morphism $q:V\to \mathcal X$ from a smooth projective variety $V$ such that the pull back   $q^*\mathcal L$ to $V$ is big (resp.~pseudoeffective).

\item \label{L:bigV'} There exists a surjective generically finite  morphism $q':V'\to \mathcal X$ from a smooth projective variety $V'$ such that the line bundle $q'^*\mathcal L$ is big (resp.~pseudoeffective).

\item \label{L:big_allV'} For any surjective generically finite morphism $q':V'\to \mathcal X$ from a smooth projective variety $V'$,  the line bundle $q'^*\mathcal L$ is big (resp.~pseudoeffective).

\end{enumerate}
\end{lem}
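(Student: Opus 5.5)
We follow the pattern of the proof of \Cref{L:ample}. Since $\mathcal X$ admits a finite flat morphism $q:V\to\mathcal X$ from a smooth projective variety (\S\ref{S:DM-intro}), and a finite flat morphism to the integral stack $\mathcal X$ is surjective and generically finite, we have (4)$\Rightarrow$(2)$\Rightarrow$(3). It therefore suffices to prove the following: for any surjective generically finite morphism $q':V'\to\mathcal X$ from a smooth projective variety $V'$, the line bundle $\mathcal L$ is big (resp.~pseudoeffective) if and only if $q'^*\mathcal L$ is big (resp.~pseudoeffective). This gives (1)$\Leftrightarrow$(3) and (1)$\Rightarrow$(4).

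To fix notation, we use \cite[Lem.~2.1.2]{AGV08} to write $\mathcal L^{\otimes e}\cong\pi^*M$ for some $e\ge 1$ and some line bundle $M$ on $X$. Bigness and pseudoeffectivity of a line bundle on a projective variety are unchanged when the bundle is replaced by a positive tensor power. Hence $\mathcal L$ is big (resp.~pseudoeffective) if and only if $M$ is. This requires a small check that the definition does not depend on which power descends: if $\mathcal L^{\otimes e'}\cong\pi^*M'$ as well, then $\pi^*(M^{\otimes e'})\cong\pi^*(M'^{\otimes e})$. Since $\pi_*\mathcal O_{\mathcal X}=\mathcal O_X$, the projection formula gives $M^{\otimes e'}\cong M'^{\otimes e}$. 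Similarly, $q'^*\mathcal L$ is big (resp.~pseudoeffective) if and only if $(\pi\circ q')^*M$ is.

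We are thus reduced to showing that $M$ is big (resp.~pseudoeffective) on the normal projective variety $X$ if and only if $g^*M$ is, where $g:=\pi\circ q':V'\to X$.

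\textbf{Key step.} The morphism $g$ is proper, since $V'$ is projective, and surjective. It is generically finite, because $\pi$ is a quasi-finite homeomorphism on points. Also $\dim V'=\dim\mathcal X=\dim X$. Hence $g$ is an alteration in the sense of the Terminology section; note that $g$ is projective because $V'$ is projective.

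We then apply the facts recalled in \S\ref{S:big-ps-e-lb}. First, $\kappa_\sigma$ is invariant under pullback by alterations \cite[Prop.~2.7]{CHMS14}, so $\kappa_\sigma(g^*M)=\kappa_\sigma(M)$. Second, on a normal projective variety, bigness is equivalent to $\kappa_\sigma=\dim$, and pseudoeffectivity is equivalent to $\kappa_\sigma\ge 0$ (\cite[Rem.~2.9]{CHMS14}, as explained above). Since $\dim V'=\dim X$, the desired equivalence follows.

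The main obstacle is this reduction to alterations. One has to verify that $\pi\circ q'$ is indeed an alteration and that the dimensions match; both rely on $\mathcal X$ being integral and on $\pi$ being a universal homeomorphism onto $X$. One also has to make sure the $\kappa_\sigma$ characterizations apply on the possibly singular normal variety $X$. If one prefers to avoid $\kappa_\sigma$, an alternative is available. For bigness, argue directly via Kodaira dimension: $\kappa(g^*M)=\kappa(M)$ for surjective generically finite $g$ with normal target, using the Stein factorization together with a finite-morphism norm argument. For pseudoeffectivity, use that $g^*(M+\epsilon A)$ is big precisely when $M+\epsilon A$ is big, where $A$ is an ample line bundle on $X$ and $\epsilon$ is a small positive rational number, and then let $\epsilon\to 0$. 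Here one needs that $g^*M$ pseudoeffective implies $g^*M+\epsilon g^*A$ big, which holds because $g^*A$ is big.
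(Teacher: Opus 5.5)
Your proposal is correct and follows essentially the paper's own argument. You reduce to showing that $\mathcal L$ is big (resp.~pseudoeffective) if and only if $q'^*\mathcal L$ is, for an arbitrary surjective generically finite $q'$; you write $\mathcal L^{\otimes e}\cong\pi^*M$; and you conclude from the invariance of $\kappa_\sigma$ under the alteration $\pi\circ q'$ together with the $\kappa_\sigma$-characterization of big and pseudoeffective bundles on the normal variety $X$. Your extra check that the condition does not depend on which power of $\mathcal L$ descends (via $\pi_*\mathcal O_{\mathcal X}=\mathcal O_X$ and the projection formula) is a small point the paper leaves implicit.
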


\begin{proof} We give the proof in the case of big line bundles; the case of pseudoeffective line bundles is the same.  
Since $\mathcal X$ admits a finite flat morphism $q:V\to \mathcal X$ from a smooth projective variety $V$, to prove the lemma, it suffices to show that if $q':V'\to \mathcal X$ is a surjective generically finite morphism from a smooth projective variety $V'$,  then $\mathcal L$ is big if and only if  $q'^*\mathcal L$ is big.  
To fix notation, let us assume that $\mathcal L^{\otimes e}\cong \pi^*M$ for some natural number $e$ and some line bundle $M$ on $X$.  
First, let us consider case that $M$ is big.  Then, using the invariance of $\kappa_\sigma$ under alterations we have that  $\dim V' = \dim X = \kappa_\sigma(M)=\kappa_\sigma ((\pi\circ q')^*M)=\kappa_\sigma (q'^*\mathcal L^{\otimes e})$, showing that $q'^*\mathcal L^{\otimes e}$ is big.
Conversely, suppose that $q'^*\mathcal L^{\otimes e}= (\pi\circ q')^*M$ is big.
Then we have $ \dim X= \dim V'=\kappa_\sigma((\pi\circ q')^*M)=\kappa_\sigma(M)$, showing that $M$ is big. 
\end{proof}

The following is representative of a number of similar results one has generalizing statements about line bundles on varieties to line bundles on DM stacks.

\begin{lem}\label{L:big+eff}
If $\mathcal L$ is big and $\mathcal M$ is pseudoeffective, then  $\mathcal L\otimes \mathcal M$ is big.
\end{lem}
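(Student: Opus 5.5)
The plan is to reduce to the corresponding statement on a smooth projective variety by pulling back along a finite flat cover, using \Cref{L:big}. We fix a finite flat morphism $q:V\to \mathcal X$ from a smooth projective variety $V$; such a morphism exists by \S\ref{S:DM-intro}. Since $\mathcal L$ is big, \Cref{L:big}\ref{L:big_allV'} (applied to $q$, which is finite and surjective, hence surjective and generically finite) shows that $q^*\mathcal L$ is big on $V$. In the same way, the pseudoeffective version of \Cref{L:big} shows that $q^*\mathcal M$ is pseudoeffective on $V$. Pullback commutes with tensor products, so $q^*(\mathcal L\otimes \mathcal M)\cong q^*\mathcal L\otimes q^*\mathcal M$.

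Next we invoke the classical fact on the smooth projective variety $V$: a big line bundle tensored with a pseudoeffective one is big. This holds because the big cone is the interior of the pseudoeffective cone in $N^1(V)_{\mathbb R}$, and the sum of an interior point with a point of the closed convex cone stays in the interior (\cite[Thm.~2.2.26]{Laz04I}). Hence $q^*(\mathcal L\otimes\mathcal M)$ is big. Then \Cref{L:big}\ref{L:bigV} gives that $\mathcal L\otimes \mathcal M$ is big on $\mathcal X$.

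Alternatively, one could work directly on the coarse space. Choose $e$ with $\mathcal L^{\otimes e}\cong\pi^*M_1$ and $\mathcal M^{\otimes e}\cong \pi^*M_2$; a common exponent exists after taking powers, noting that bigness and pseudoeffectivity of $M_i$ are stable under positive powers. One would then argue that $M_1\otimes M_2$ is big on the normal projective variety $X$. That route needs the cone statement on a normal variety and a small check that the descended line bundles have the expected properties. The cover argument avoids both issues, so I would use it.

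The main obstacle is a minor bookkeeping point. The definitions of big and pseudoeffective on $\mathcal X$ refer to \emph{some} power descending, and \Cref{L:big} is what makes these notions independent of the chosen power and testable on any cover. Once \Cref{L:big} is invoked in both directions, nothing else is delicate.
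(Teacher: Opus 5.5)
Your proposal is correct and matches the paper's route: the paper's proof just says the lemma follows from the preceding results (i.e.\ \Cref{L:big}) and the corresponding statement on schemes. That is exactly your argument of pulling back to a finite flat cover $V$, applying the big-plus-pseudoeffective cone fact on $V$, and descending again via \Cref{L:big}.
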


\begin{proof}
This follows directly from the results above and the corresponding statements on schemes. 
\end{proof}

\subsection{Weak positivity on stacks} 

We start by reviewing the definition of weak positivity on smooth quasi-projective varieties following Viehweg.  We note that there are some minor variations in the definition given in the literature, depending on whether one works with smooth or normal varieties, and whether one works with locally free sheaves or coherent sheaves; see e.g., \cite[Def.~p.643]{Vhilb1}, \cite[Def.~2.11 and Variant 2.13]{Vmoduli}, and \cite[Def.~3.1]{Vkod2}.

We will use the formulation in \cite[Def.~3.1]{PS17}, for torsion-free coherent sheaves on smooth quasi-projective varieties.
Let $X$ be a smooth quasi-projective variety, let $U\subseteq X$ be an open
subvariety, and let $F$ be a torsion-free coherent sheaf on $X$. We say that $F$ is \emph{generated by global sections at each point of $U$} if the natural map $$H^0(Y,F)\otimes_{\mathbb C} \mathcal O_X\longrightarrow  F$$ is surjective when restricted to $U$. 
The sheaf  $F$ is \emph{weakly positive over $U$}   if for every integer $\alpha > 0$ and every ample line bundle $A$ on $ X$, there is an integer $\beta >0$ 
such that
$$
(\operatorname{Sym}^{\alpha \beta} F)^{\vee \vee}\otimes A^{\beta} 
$$
is generated by global sections at each  point of $U$. 
We say that $F$ is weakly
positive if such an open subset $U \ne \emptyset$ exists.

\begin{rem}
If $i_0:U_0\hookrightarrow X$ is the open subset over which the torsion-free coherent sheaf $F$ is locally free, then the complement of $U_0$ has codimension at least two.  For any natural number $\nu$ we have $(\operatorname{Sym}^{\nu} F)^{\vee \vee}\cong i_{0,*}(\operatorname{Sym}^\nu F|_{U_0})$, and for this reason, when making computations, one can frequently assume that $F$ is locally free.  
\end{rem}

\begin{rem}\label{R:torsion}
Occasionally, for the purpose of allowing for the pull back   of torsion-free sheaves along non-flat morphisms (cf.~\cite[\href{https://stacks.math.columbia.edu/tag/0AXV}{Lem.~0AXV}]{stacks-project}),  which may fail to be torsion-free, we will want to allow for coherent sheaves  $F$ that are not torsion-free. In  that case, following \cite[Variant 2.13]{Vmoduli}, we say that $F$ is weakly positive over $U$ if the torsion-free quotient $F/F_{\operatorname{torsion}}$ is locally free on $U$ and weakly positive over $U$. 
\end{rem}

We will want to use the following basic properties:

\begin{lem}[{\cite{Vmoduli,Vkod2, Vhilb1}}]\label{L:WP-prpty}
Let $F$ be a torsion-free coherent sheaf on a smooth quasi-projective variety $X$.
\begin{enumerate}[label=(\arabic*)]

\item \label{L:WP-prpty-alt} If $\tau: X'\to X$ is an alteration of smooth quasi-projective varieties, then, if $F$ is locally free over $U\subseteq X$, then  $F$ is weakly positive over $U$ if and only $\tau^*F$ is weakly positive over $\tau^{-1}(U)$ \cite[3.4 e)]{Vhilb1}.

\item \label{L:WP-prpty-surj} If $F\to  F'$ is a morphism of torsion-free coherent sheaves on $X$, which is surjective over an open subset $U\subseteq X$, then if $ F$ is weakly positive over $U$, so is  $F'$; \cite[Lem.~2.16(c)]{Vmoduli} or \cite[Lem.~1.4 1)]{Vkod1}.

\end{enumerate}
\end{lem}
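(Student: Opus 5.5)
Both parts of \Cref{L:WP-prpty} are standard facts for varieties, so the plan is to reduce each to the cited references and check that the hypotheses match the formulation in \cite[Def.~3.1]{PS17}.

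\textbf{Part \ref{L:WP-prpty-surj}.} This is the easier part, and I would prove it directly. The map $F\to F'$ induces maps
$$
(\operatorname{Sym}^{\alpha\beta}F)^{\vee\vee}\otimes A^{\beta}\longrightarrow (\operatorname{Sym}^{\alpha\beta}F')^{\vee\vee}\otimes A^{\beta}.
$$
Symmetric powers and reflexive hulls are functorial, and this map is surjective on the open set where $F\to F'$ is surjective and both sheaves are locally free. There the reflexive hull of $\operatorname{Sym}$ is just $\operatorname{Sym}$. Global generation at points of $U$ then passes through the surjection. The one subtlety is that the definition asks for generation at every point of $U$, while the reflexive hulls may differ from $\operatorname{Sym}$ on the non-locally-free locus. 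I would handle this by shrinking $U$ to the locus where $F'$ is locally free, in keeping with \Cref{R:torsion}. This matches the statement in \cite[Lem.~2.16(c)]{Vmoduli} for Viehweg's definition, which uses a dense open subset.

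\textbf{Part \ref{L:WP-prpty-alt}.} The forward direction is pullback. Take an ample $A'$ on $X'$ and write $\tau^*A$ with $A$ ample on $X$. Then $\tau^*A$ is nef and big but not ample, so I would use the standard trick of choosing $A$ with $A'\otimes\tau^*A^{-1}$ effective. More precisely, since $\tau^*A$ is big we have $(\tau^*A)^{m}\hookrightarrow A'^{k}$ after suitable powers. Replacing $\alpha$ by a multiple then lets one compare $A'^{\beta}$ with $\tau^*A^{\beta}$ up to sections nonvanishing on a smaller open set. Global sections pulled back from $X$ still generate $\tau^*$ of the sheaf over $\tau^{-1}(U)$. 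Where $\tau^*F$ is locally free, $\tau^*(\operatorname{Sym} F)^{\vee\vee}$ maps to $(\operatorname{Sym}\tau^*F)^{\vee\vee}$.

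The converse direction is the main obstacle. I would factor $\tau$, after Stein factorization and shrinking, into a finite part and a birational part. For the finite part I would use the trace or norm: $F\to\tau_*\tau^*F$ splits via $\frac{1}{\deg}\mathrm{tr}$ over the locus where $\tau$ is flat. Combined with a descent of global generation through $\tau_*$ and the ample twist $\tau_*\mathcal O_{X'}\otimes A^{N}$ being globally generated, this gives weak positivity of $F$. For the birational part one uses that sections over $X'$ of reflexive sheaves push forward to sections over $X$ on the isomorphism locus. In practice I would cite \cite[3.4 e)]{Vhilb1} for this step rather than redo it. The only work left would be checking that the open set $\tau^{-1}(U)$ versus a dense open subset of it causes no issue, since the lemma as stated is over $U$ and $\tau$ is surjective.
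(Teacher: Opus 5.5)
\textbf{The forward implication in (1) fails as sketched.} The problem is the step where you say that, where $\tau^*F$ is locally free, $\tau^*(\operatorname{Sym}F)^{\vee\vee}$ maps to $(\operatorname{Sym}\tau^*F)^{\vee\vee}$. Your generators are global sections of $(\operatorname{Sym}^{\alpha\beta}F)^{\vee\vee}\otimes A^\beta$. These are really sections of $\operatorname{Sym}^{\alpha\beta}F\otimes A^\beta$ over the locally free locus $U_0$ of $F$. Viewed as sections of $(\operatorname{Sym}^{\alpha\beta}\tau^*F)^{\vee\vee}\otimes\tau^*A^\beta$, their pullbacks are defined only over $\tau^{-1}(U_0)$. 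If $\tau$ contracts a divisor into $X\smallsetminus U_0$, there is no morphism $\tau^*((\operatorname{Sym}^nF)^{\vee\vee})\to(\operatorname{Sym}^n\tau^*F)^{\vee\vee}$ through which they extend.

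This is not a removable technicality: the implication itself fails in the stated generality. Let $F=I_p$ on a smooth projective surface $X$, let $U=X\smallsetminus\{p\}$, and let $\tau$ be the blow-up of $p$ with exceptional curve $E$.
\begin{itemize}
\item Since $(\operatorname{Sym}^nI_p)^{\vee\vee}\cong\mathcal O_X$, the sheaf $F$ is weakly positive over $U$, where it is locally free.
\item But $\tau^*F/\mathrm{tors}\cong\mathcal O_{X'}(-E)$ is not pseudoeffective, since $(-E)\cdot(\tau^*H-\varepsilon E)=-\varepsilon<0$.
\item By \Cref{R:torsion} and \Cref{R:WPosLB}, $\tau^*F$ is therefore not weakly positive over $\tau^{-1}(U)$, or over any nonempty open set. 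So shrinking cannot rescue it.
\end{itemize}

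Your argument does work, with no shrinking, if $F$ is locally free on all of $X$, or if $\tau$ is flat. Flatness covers finite surjective morphisms of smooth varieties, hence the finite flat covers used later in the paper. In that case $\tau^{-1}(X\smallsetminus U_0)$ has codimension $\ge 2$ and $\tau^*$ commutes with reflexive hulls.

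For the twist you also need a different input. The right fact is that $A'^{k}\otimes\tau^*A^{-1}$ is globally generated for $k\gg0$ because $A'$ is ample. Bigness of $\tau^*A$ only gives an inclusion $A'\hookrightarrow\tau^*A^m$, which goes the wrong way. Now apply weak positivity of $F$ with $\alpha k$ in place of $\alpha$. This gives generation of $(\operatorname{Sym}^{\alpha k\beta}\tau^*F)^{\vee\vee}\otimes A'^{k\beta}$ over all of $\tau^{-1}(U)$, with no need to shrink.

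\textbf{Part (2) proves a weaker statement than claimed.} Your direct argument is correct on the locus where $F'$ is locally free; you only need $F'$ locally free there, not $F$. But shrinking $U$ to that locus gives weak positivity only over $U\cap\{F'\text{ locally free}\}$, not over $U$.

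The paper gives no direct argument for either part. Its proof is exactly the hypothesis bookkeeping your proposal postpones:
\begin{itemize}
\item For (2), it notes that \cite[Lem.~2.16(c)]{Vmoduli} assumes $F'$ locally free over $U$, and takes the torsion-free formulation from \cite[Lem.~1.4 1)]{Vkod1}.
\item For (1), it cites \cite[3.4 e)]{Vhilb1}, pointing out that Viehweg's definition requires local freeness over $U$ and that his proof reduces to the locally free case.
\end{itemize}
Your citation of \cite[3.4 e)]{Vhilb1} for the converse direction of (1) matches the paper. The point you leave open at the end, $\tau^{-1}(U)$ versus a dense open subset of it, is precisely where the proposal is incomplete. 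In the forward direction of (1) it is settled by the flatness or local-freeness hypothesis above, not by shrinking.
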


\begin{proof} As the hypotheses in the references are a little hard to keep track of,  (although the differences in hypotheses are not particularly consequential in the end) we include a brief discussion here.   
\ref{L:WP-prpty-alt} This is stated as  \cite[3.4 e)]{Vhilb1}; however, note that in the definition of weakly positive used in that paper \cite[Def.~p.643]{Vhilb1}, it is required that the sheaf be locally free when restricted to $U$  (and this hypothesis is used in the proof of  \cite[3.4 e)]{Vhilb1} on p.652 to reduce to the locally free case).  
\ref{L:WP-prpty-surj} This is stated in  \cite[Lem.~2.16(c)]{Vmoduli} under the weaker hypothesis that $F$ and $F'$ just be coherent sheaves, but then under the stronger hypothesis that $F'$ be locally free over $U$.  This is stated in the formulation we have here, for torsion-free coherent sheaves, in  \cite[Lem.~1.4 1)]{Vkod1}.
\end{proof}

We now turn to the situation of stacks:

\begin{dfn}[Weakly positive]\label{D:PotWP}
    A torsion-free coherent sheaf $\mathcal F$ on ${\mathcal X}$ is \emph{weakly positive over a Zariski open substack $\mathcal {U}\subseteq \mathcal X$} if there exists a finite flat surjective morphism $q:V\to \mathcal X$ from a smooth projective variety so that $q^*\mathcal F$ is weakly positive over $q'^{-1}(\m{U})$.  We say that $\mathcal F$ is \emph{weakly positive} if such a cover and open substack $\mathcal U \ne \emptyset$ exist.
\end{dfn}

We have the following equivalent characterizations of weak positivity:

\begin{lem}\label{L:WPchar} For a torsion-free coherent sheaf  $\mathcal F$ on $\mathcal X$ that is locally-free over an open substack $\mathcal U\subseteq \mathcal X$, the following are equivalent:
\begin{enumerate}[label=(\arabic*)]
\item  \label{L:WPcharWP} $\mathcal F$ is weakly positive over $\mathcal U$.

\item \label{L:WPcharV}
For any finite flat morphism $q:V\to \mathcal X$ from a smooth projective variety $V$,  we have that  $q^*\mathcal F$ is weakly positive over $q^{-1}(\mathcal U)$.

\item \label{L:WPchar_allV'} For any surjective generically finite morphism $q':V'\to \mathcal X$ from a smooth projective variety $V'$,  we have that  $q'^*\mathcal F$ is weakly positive over $q'^{-1}(\mathcal U)$.

\end{enumerate}
\end{lem}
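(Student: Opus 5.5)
The implications \ref{L:WPchar_allV'}$\Rightarrow$\ref{L:WPcharV}$\Rightarrow$\ref{L:WPcharWP} are essentially formal. A finite flat morphism $q:V\to\mathcal X$ from a smooth projective variety is in particular surjective and generically finite; surjectivity holds because $q$ is flat, hence open, and finite, hence closed, and $\mathcal X$ is integral. Such a cover exists by \S\ref{S:DM-intro}, so \ref{L:WPcharV} produces the cover demanded in \Cref{D:PotWP}. The substance is \ref{L:WPcharWP}$\Rightarrow$\ref{L:WPchar_allV'}.

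For that implication, fix a finite flat $q:V\to\mathcal X$ with $q^*\mathcal F$ weakly positive over $q^{-1}(\mathcal U)$, and let $q':V'\to\mathcal X$ be any surjective generically finite morphism from a smooth projective variety. I would form the fiber product $V\times_{\mathcal X}V'$, which is a scheme because morphisms from schemes to $\mathcal X$ are schematic. It is projective over $V'$ since $q$ is projective, and both projections are surjective and generically finite.

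Choose an irreducible component $W_0$ dominating $V'$; it then also dominates $V$, since $V'\to\mathcal X$ is surjective and $\mathcal X$ is irreducible, so the generic point of $V'$ lies over that of $\mathcal X$. Take a resolution $W\to W_0$. This gives a smooth projective $W$ with alterations $a:W\to V$ and $b:W\to V'$ satisfying $q\circ a\cong q'\circ b$. Properness of $W$ and projectivity of the relevant maps make $a$ and $b$ projective morphisms of projective varieties.

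Now apply \Cref{L:WP-prpty}\ref{L:WP-prpty-alt} twice. The sheaf $q^*\mathcal F$ is torsion-free because $q$ is flat, and it is locally free over $q^{-1}(\mathcal U)$. Hence $a^*q^*\mathcal F$ is weakly positive over $a^{-1}q^{-1}(\mathcal U)=b^{-1}q'^{-1}(\mathcal U)$. Since $a^*q^*\mathcal F\cong b^*q'^*\mathcal F$, the same lemma applied to the alteration $b$ (now in the direction ``pullback weakly positive $\Rightarrow$ original weakly positive'') shows that $q'^*\mathcal F$ is weakly positive over $q'^{-1}(\mathcal U)$.

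I expect the main obstacle to be that $q'^*\mathcal F$ need not be torsion-free when $q'$ is not flat. So I must work in the sense of \Cref{R:torsion} and apply the alteration lemma to $F:=q'^*\mathcal F/\mathrm{tors}$, which is locally free on $q'^{-1}(\mathcal U)$. There it coincides with $q'^*\mathcal F$, so it is torsion-free and locally free over that set. The same adjustment applies to $a^*q^*\mathcal F$. I would also check that $b^*F$ and $a^*q^*\mathcal F$ agree modulo torsion, or at least over $b^{-1}q'^{-1}(\mathcal U)$. Agreement over that open set suffices, because the alteration lemma only concerns behaviour over the given open set once local freeness there is known, and \Cref{L:WP-prpty}\ref{L:WP-prpty-surj} lets one pass between sheaves that are isomorphic over $U$.
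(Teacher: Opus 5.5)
Your proof is correct and is essentially the paper's argument: fiber the given finite flat cover with $q'$, resolve a dominating component to get alterations to $V$ and $V'$, and apply \Cref{L:WP-prpty}\ref{L:WP-prpty-alt} in both directions. Two of your details improve on the paper as written. Taking the fiber product over $\mathcal X$ (the paper writes $V\times_X V'$, over the coarse space) is what actually guarantees $a^*q^*\mathcal F\cong b^*q'^*\mathcal F$, and your passage to torsion-free quotients via \Cref{L:WP-prpty}\ref{L:WP-prpty-surj} supplies bookkeeping the paper leaves implicit.
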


\begin{proof}
Since there exists a finite flat morphism $q:V\to \mathcal X$ from a smooth projective variety $V$, clearly, we have \ref{L:WPchar_allV'} $\implies$ \ref{L:WPcharV} $\implies$ \ref{L:WPcharWP}.
Let us now show that \ref{L:WPcharWP} $\implies$ \ref{L:WPchar_allV'}.  To this end,  assume that there exists a finite flat  morphism $q:V\to \mathcal X$ from a smooth projective variety $V$ such that $q^*\mathcal F$ is weakly positive.  
And now let  $q':V'\to \mathcal X$ be any surjective generically finite morphism from a smooth projective variety $V'$.  Taking a resolution of singularities of the fibered product $V\times_{X}V'$, we obtain a smooth projective variety $V''$ and  alterations $q_1:V''\to V$ and $q_2:V''\to V'$.  The result then follows from 
\Cref{L:WP-prpty}\ref{L:WP-prpty-alt}.  
\end{proof}

\begin{rem}\label{R:WPosLB}
Recall that a line bundle $L$ on a smooth projective variety $V$ is weakly positive (over some non-empty subset $U\subseteq V$) if and only if $L$ is pseudo-effective (for line bundles, the definition of weak positivity is equivalent to the condition that for any ample line bundle $A$ on $V$ and any $\epsilon>0$, one has $L+\epsilon A$ is in the cone generated by effective divisors).  Note that, perhaps somewhat confusingly, the line bundle $L$ is weakly positive \emph{over} $V$ if and only if it is  nef (e.g., \cite[Rem.~2.12.2]{Vmoduli}).  
It then follows from \Cref{L:WPchar} and \Cref{L:ample} that a  line bundle $\mathcal L$ on $\mathcal X$ is  weakly positive (over some non-empty open substack $\mathcal U\subseteq \mathcal X$)  if and only if it is pseudo-effective.  Similarly, the line bundle is weakly positive \emph{over $\mathcal X$} if and only if it is nef.
\end{rem}

\begin{rem}
There is a notion of weak positivity for normal projective varieties \cite[Def.~p.643]{Vhilb1}.   
Suppose that  $\mathcal F$ descends to $F$ on $X$, that  $U$ is an open subset such that the singular locus of $U$ is proper, and that $F$ is locally free on $U$.  Then $F$ is weakly positive over $U$ if and only if $\mathcal F$ is weakly positive over $\pi^{-1}(U)\subseteq \mathcal X$ \cite[3.4 e) and Assumptions 3.1]{Vhilb1}. In particular, by restricting if necessary  to an open set $U$ that is smooth and where $F$ is locally free, one has that  $F$ is weakly positive on $X$ if and only if $\mathcal F$ is weakly positive on $\mathcal X$.  
\end{rem}

We also have:

\begin{lem}\label{L:surjWP}
If  $\mathcal F\to \mathcal F'$ is a morphism of torsion-free coherent sheaves on $\mathcal X$, which is surjective over $\mathcal U$, then if $\mathcal F$ is weakly positive over $\mathcal U$, so is $\mathcal F'$.
\end{lem}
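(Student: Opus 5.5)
The plan is to reduce to the corresponding statement on varieties, \Cref{L:WP-prpty}\ref{L:WP-prpty-surj}, by pulling back along a finite flat cover. Since $\mathcal F$ is weakly positive over $\mathcal U$, \Cref{D:PotWP} provides a finite flat surjective morphism $q:V\to \mathcal X$ from a smooth projective variety $V$ such that $q^*\mathcal F$ is weakly positive over $q^{-1}(\mathcal U)$. We use this same cover $q$ to check the weak positivity of $\mathcal F'$.

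First, pull back the morphism $\mathcal F\to\mathcal F'$ along $q$ to obtain $q^*\mathcal F\to q^*\mathcal F'$.
\begin{itemize}
\item Because $q$ is flat, $q^*$ is exact. It therefore preserves surjectivity over $\mathcal U$: the pulled-back map is surjective over $q^{-1}(\mathcal U)$, since restriction to open subsets commutes with pullback.
\item The pullbacks $q^*\mathcal F$ and $q^*\mathcal F'$ are again torsion-free coherent sheaves on $V$. To see this, work \'etale locally on $\mathcal X$, where $q$ becomes a finite flat morphism of smooth varieties. A torsion-free sheaf embeds in a locally free sheaf (generically, for instance into its double dual), so by flatness its pullback embeds in a locally free sheaf on the integral scheme $V$, and hence is torsion-free.
\end{itemize}

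Next, apply \Cref{L:WP-prpty}\ref{L:WP-prpty-surj} on $V$ with open subset $q^{-1}(\mathcal U)$. This shows that $q^*\mathcal F'$ is weakly positive over $q^{-1}(\mathcal U)$, and so, by \Cref{D:PotWP}, $\mathcal F'$ is weakly positive over $\mathcal U$.

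If the paper's conventions require $\mathcal F'$ to be locally free over $\mathcal U$ in order to apply \Cref{L:WPchar}, nothing further is needed. The definition only asks for the existence of one cover, and we used the same $q$, so no comparison between covers arises.

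The only step needing care is the torsion-freeness of the pullback, which is what lets us invoke \Cref{L:WP-prpty}\ref{L:WP-prpty-surj} in its torsion-free formulation. Even if that failed, \Cref{R:torsion} would let us pass to the torsion-free quotients, which are still surjected onto over $q^{-1}(\mathcal U)$. So this is not a genuine obstacle.
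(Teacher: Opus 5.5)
Your proposal is correct and follows the same route as the paper. You pull back along the cover $q$ that witnesses weak positivity of $\mathcal F$, observe that $q^*\mathcal F\to q^*\mathcal F'$ is still surjective over $q^{-1}(\mathcal U)$ with torsion-free source and target (by flatness), and then apply the variety-level statement on $V$.

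Two small points on your justifications. First, the double dual of a torsion-free sheaf is reflexive but need not be locally free, so justify torsion-freeness by saying a torsion-free sheaf locally embeds into a free sheaf. Second, preserving surjectivity over $q^{-1}(\mathcal U)$ only needs right exactness of $q^*$, not flatness.
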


\begin{proof}
We consider the pullback $q^*\mathcal F\to q^*\mathcal F'$, which is surjective over $q^{-1}(\mathcal U)$.  (As $q$ is flat, we have that $q^*\mathcal F$ and $q^*\mathcal F'$ are torsion-free.)  Then, from 
\Cref{L:WP-prpty}\ref{L:WP-prpty-surj},  we have that $q^*\mathcal F'$ is weakly positive over $q^{-1}(\mathcal U)$.  Therefore, $\mathcal F'$ is weakly positive over $\mathcal U$. 
\end{proof}

\subsection{Big sheaves on stacks}

We start by reviewing the definition of big sheaves on smooth quasi-projective varieties following Viehweg \cite[Def.~2.7]{Vmoduli} and \cite[Lem.~3.6]{Vkod2} (see also \cite[Def.~3.1]{PS17}).
Let $X$ be a smooth projective variety and let $F$ be a non-zero torsion-free coherent sheaf on $X$. 
Then $F$ is big if there exists an ample line bundle $A$ on $X$, a natural number $\nu$, and an inclusion
\begin{equation}\label{E:bigF-defSch}
\bigoplus ^{\operatorname{rank}\operatorname{Sym}^\nu F}A \hookrightarrow (\operatorname{Sym}^\nu F)^{\vee \vee}.
\end{equation}
This is equivalent to saying that for every line bundle $L$ on $X$, there exists an integer $\gamma>0$ such that $(\operatorname{Sym}^\gamma F)^{\vee \vee} \otimes L^{-1}$ is weakly positive \cite[Lem.~3.6]{Vkod2}.

\begin{rem}\label{R:torsionBig} As in the case of weak positivity (\Cref{R:torsion}), 
occasionally, for the purpose of allowing for the pull back   of torsion-free sheaves along non-flat morphisms (cf.~\cite[\href{https://stacks.math.columbia.edu/tag/0AXV}{Lem.~0AXV}]{stacks-project}),  which may fail to be torsion-free, we will want to allow for coherent sheaves  $ F$ that are not torsion-free.  For such sheaves, we use the same definition of big; note that this is equivalent to the torsion-free quotient $F/F_{\operatorname{torsion}}$ being big.  
\end{rem}

We will want to use the following basic properties (e.g., \cite[Lem.~3.2]{PS17}):

\begin{lem}

\label{L:big-prpty}
Let $F$ be a torsion-free coherent sheaf on a smooth projective variety $X$.
\begin{enumerate}[label=(\arabic*)]

\item 

\label{L:big-prpty-ff} If $\tau: X'\to X$ is a finite surjective morphism of 

of smooth projective varieties, then, if $F$ is big, so is  $\tau^*F$.

\item \label{L:big-prpty-surj} If $F\to  F'$ is a morphism of torsion-free coherent sheaves on $X$, which is surjective over an open subset $U\subseteq X$, then if $ F$ is big, so is $F'$.

\item \label{L:big-prpty-wpb} If $F$ is weakly positive and $B$ is a big line bundle on $X$, then $F\otimes B$ is big.

\item \label{L:big-prpty-det} If $F$ is  big, then $\det F$ is a big line bundle.

\end{enumerate}
\end{lem}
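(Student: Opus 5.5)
The plan is to prove the four parts in order, working directly from the definition \eqref{E:bigF-defSch} and, for part (3), from Viehweg's characterization via weak positivity \cite[Lem.~3.6]{Vkod2}. We reduce to the locally free case throughout. Torsion-free sheaves on smooth varieties are locally free off a closed subset of codimension at least two, and reflexive hulls are pushforwards from that open set, so inclusions of reflexive sheaves can be checked there.

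\emph{Part (1).} Start from an inclusion $\bigoplus A\hookrightarrow (\operatorname{Sym}^\nu F)^{\vee\vee}$ with $A$ ample. Pull it back along $\tau$. Since $\tau$ is finite and surjective between varieties of the same dimension, $\tau^*A$ is ample. We also need the pulled-back map to remain injective. Its source is locally free, so injectivity holds as soon as the map is generically injective, and generic injectivity holds because $\tau$ is dominant.

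The target $\tau^*(\operatorname{Sym}^\nu F)^{\vee\vee}$ maps to $(\operatorname{Sym}^\nu \tau^*F)^{\vee\vee}$. This map is an isomorphism over $\tau^{-1}$ of the locally free locus of $F$. We therefore get a generically injective map $\bigoplus \tau^*A\to(\operatorname{Sym}^\nu\tau^*F)^{\vee\vee}$, which is injective because its source is locally free. Since $\operatorname{rank}$ is preserved, this is exactly the inclusion required by \eqref{E:bigF-defSch} for $\tau^*F$.

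\emph{Part (2).} The map $F\to F'$ is surjective over $U$, and $F'$ is torsion-free and nonzero, so its rank is at most that of $F$. The cleanest route is the criterion via \cite[Lem.~3.6]{Vkod2}. Fix a line bundle $L$. The induced map $(\operatorname{Sym}^\gamma F)^{\vee\vee}\otimes L^{-1}\to(\operatorname{Sym}^\gamma F')^{\vee\vee}\otimes L^{-1}$ is surjective over the open subset of $U$ where both sheaves are locally free. Weak positivity of the source then transfers to the target by \Cref{L:WP-prpty}\ref{L:WP-prpty-surj}. So $F'$ is big.

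\emph{Part (3).} Write $B^{m}=A\otimes E$ with $A$ ample and $E$ effective, by Kodaira's lemma. Given $L$, we want $\gamma$ with $(\operatorname{Sym}^\gamma(F\otimes B))^{\vee\vee}\otimes L^{-1}$ weakly positive. This sheaf equals $(\operatorname{Sym}^\gamma F)^{\vee\vee}\otimes B^\gamma\otimes L^{-1}$.

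Take $\gamma=m k$ with $k$ large enough that $A^{k}\otimes L^{-1}$ is ample, say $A^k\otimes L^{-1}=H$. Then the sheaf is $(\operatorname{Sym}^{\gamma}F)^{\vee\vee}\otimes H\otimes \mathcal O(kE)$. Weak positivity is preserved by symmetric powers, so $(\operatorname{Sym}^\gamma F)^{\vee\vee}$ is weakly positive. Tensoring with an ample line bundle preserves weak positivity, and so does tensoring with an effective divisor, since the multiplication map by the section is an isomorphism away from $E$ and weak positivity only needs some open set. Hence the sheaf is weakly positive, and $F\otimes B$ is big. These preservation properties are standard in \cite{Vmoduli}.

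\emph{Part (4).} Take the inclusion \eqref{E:bigF-defSch}. Restricting to the locally free locus and taking top exterior powers gives a generically nonzero map $A^{\otimes r}\to\det((\operatorname{Sym}^\nu F)^{\vee\vee})$, where $r=\operatorname{rank}\operatorname{Sym}^\nu F$. So $\det(\operatorname{Sym}^\nu F)^{\vee\vee}=A^{r}(D)$ with $D$ effective, and hence it is big. On the other hand, this determinant is $(\det F)^{\otimes c}$ with $c=\nu r/\operatorname{rk}F>0$, which is checked on the open set where $F$ is locally free, then extended by \eqref{E:detVBU}. Therefore $\det F$ is big.

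The main obstacle is part (3): one must make sure the preservation properties of weak positivity being cited hold in the torsion-free formulation of \cite[Def.~3.1]{PS17}. If needed, I would instead cite \cite[Lem.~3.2]{PS17} directly for all four statements.
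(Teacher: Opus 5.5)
Your proposal is correct and follows essentially the same route as the paper. For (1) you pull back the inclusion and use that a generically injective map out of a locally free sheaf is injective and that finite pullbacks of ample bundles are ample; for (2) you use Viehweg's criterion together with \Cref{L:WP-prpty}\ref{L:WP-prpty-surj}; for (4) you use $(\det F)^{c}\cong\det(\operatorname{Sym}^\nu F)^{\vee\vee}\cong A^{r}\otimes\mathcal O(D)$ with $D$ effective, which matches the paper's argument via the torsion cokernel. For (3), which the paper leaves to the reader, your Kodaira-lemma argument via the weak-positivity criterion is a valid way to supply the details.
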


\begin{proof} We provide a proof for the convenience of the reader. 
\ref{L:big-prpty-ff} 
As $\mathcal F$ is torsion-free, we can compute on the locus where $\mathcal F$ is locally free; in particular $\tau^*((\operatorname{Sym}^\nu F)^{\vee\vee})\cong (\operatorname{Sym}^\nu \tau^* F)^{\vee\vee}$.  
As $\tau$ is generically flat, the kernel of the pullback of \eqref{E:bigF-defSch} would have to be torsion; but the pull back   of a locally free sheaf is locally free, and therefore torsion-free, so that the kernel, being contained in a torsion-free sheaf would have to be zero.  
One then concludes using the fact that the pull back   of an ample line bundle by a finite morphism is ample. 
 \ref{L:big-prpty-surj} Use that the morphism $F\to F'$ induces a morphism $(\operatorname{Sym}^\gamma F)^{\vee \vee} \otimes L^{-1}\to (\operatorname{Sym}^\gamma F')^{\vee \vee} \otimes L^{-1}$, surjective over $U$; then use \Cref{L:WP-prpty}\ref{L:WP-prpty-surj}. \ref{L:big-prpty-wpb}  follows from the definitions and is left to the reader. \ref{L:big-prpty-det}  This is essentially the same as \cite[Lem.~1.4 6)]{Vkod1}.   Assume that $F$ is big, so that there is an inclusion as in \eqref{E:bigF-defSch}.   Let $Q$ be the quotient, which is torsion.  We then have
\begin{equation}\label{E:L:big-prpty-pf}
(\det F)^a=\det (\operatorname{Sym}^\nu F)^{\vee\vee} = A^{\operatorname{rank}\operatorname{Sym}^\nu F} \otimes \det Q,
\end{equation}
where $a\cdot \operatorname{rank}F= \nu \cdot \operatorname{rank}\operatorname{Sym}^\nu F$ (e.g., \cite[Lem.~1.5]{Vkod1}), and we are using that, since $F$ is torsion-free, the determinant can be computed on the locus where $F$ is locally free.  As the determinant of a torsion sheaf is effective, 
we have that $\det F$ is big.
\end{proof}

We now turn to the situation of stacks:

\begin{dfn}[Big sheaves] 
A torsion-free coherent sheaf $\mathcal F$ on $\mathcal X$ is \emph{big}
if there exists a finite flat  morphism $q:V\to \mathcal X$ from a smooth projective variety $V$ such that 
$q^*\mathcal F$ is  big.
\end{dfn}

\begin{rem}\label{R:bigShdesc}
There is a notion of big sheaves for normal projective varieties \cite[Def.~2.7 b)]{Vmoduli}.   
If $\mathcal F$ descends to $F$ on $X$, then  if $F$ is big, so is   $\mathcal F$; 
the argument is the same as \Cref{L:big-prpty}\ref{L:big-prpty-ff}, using that the finite morphism $\pi:\mathcal X\to X$ is generically flat (in particular, flat over the smooth locus of $X$; see \S \ref{S:DM-intro}).
\end{rem}

The following basic properties follow from \Cref{L:big-prpty}:

\begin{lem}\label{L:pbig-prpty}
Let $\mathcal F$ be a torsion-free coherent sheaf on   $\mathcal X$.
\begin{enumerate}[label=(\arabic*)]

\item \label{L:pbig-prpty-surj} If $\mathcal F\to  \mathcal F'$ is a morphism of torsion-free coherent sheaves on $\mathcal X$, which is surjective over a Zariski open substack $\mathcal U\subseteq \mathcal X$, then if $ \mathcal F$ is   big, so is $\mathcal F'$.

\item \label{L:pbig-prpty-wpb} If $\mathcal F$ is  weakly positive and $\mathcal B$ is a big line bundle on $\mathcal X$, then $\mathcal F\otimes \mathcal B$ is   big.

\item \label{L:pbig-prpty-det} If $\mathcal F$ is   big, then $\det \mathcal F$ (see \eqref{E:det-def}) is a big line bundle.

\end{enumerate}
\end{lem}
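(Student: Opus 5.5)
The plan is to reduce each of the three statements to the corresponding part of \Cref{L:big-prpty}. We pull back along a finite flat morphism $q:V\to \mathcal X$ from a smooth projective variety. Since $q$ is flat, $q^*$ is exact and preserves torsion-freeness. It also commutes with the formation of reflexive hulls of symmetric powers: this can be computed on the codimension $\ge 2$ complement of the non-locally-free locus, and $q$, being finite flat, preserves codimension of preimages.

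The preliminary step, and the one requiring the most care, is to show that bigness of $\mathcal F$ does not depend on the chosen cover. Suppose $q_1^*\mathcal F$ is big for some finite flat $q_1:V_1\to\mathcal X$, and let $q_2:V_2\to \mathcal X$ be another such cover. Take $V''$ to be a resolution of a component of $V_1\times_{\mathcal X}V_2$ dominating both factors. This gives generically finite surjective maps $r_i:V''\to V_i$, but they need not be finite, so \Cref{L:big-prpty}\ref{L:big-prpty-ff} does not apply directly. To get around this, I would use the characterization of bigness by weak positivity \cite[Lem.~3.6]{Vkod2}.

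Fix an ample line bundle $A_2$ on $V_2$. Some power of $A_2$ is dominated by a norm-type pullback from $X$. More concretely, choose $H$ ample on $X$; then $(\pi q_2)^*H$ is ample on $V_2$, since $\pi q_2$ is finite. Hence it suffices to show that $(\operatorname{Sym}^\gamma q_2^*\mathcal F)^{\vee\vee}\otimes (\pi q_2)^*H^{-1}$ is weakly positive for some $\gamma$. By \Cref{L:WP-prpty}\ref{L:WP-prpty-alt}, weak positivity can be checked after pulling back to $V''$, and then descended to $V_1$ via $r_1$. On $V_1$ the sheaf $(\pi q_1)^*H$ is a line bundle and $q_1^*\mathcal F$ is big, so by \cite[Lem.~3.6]{Vkod2} there is a $\gamma$ making $(\operatorname{Sym}^\gamma q_1^*\mathcal F)^{\vee\vee}\otimes(\pi q_1)^*H^{-1}$ weakly positive. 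Both pullbacks to $V''$ agree with the pullback of a single sheaf defined on $\mathcal X$, since the hulls agree off codimension two. Transporting weak positivity $V_1\to V''\to V_2$ therefore gives bigness on $V_2$. I expect this independence-of-cover argument to be the main obstacle; the rest is formal.

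With independence in hand, the three parts follow. For \ref{L:pbig-prpty-surj}, the map $q^*\mathcal F\to q^*\mathcal F'$ is surjective over $q^{-1}(\mathcal U)$, so \Cref{L:big-prpty}\ref{L:big-prpty-surj} applies. For \ref{L:pbig-prpty-wpb}, \Cref{L:WPchar} shows that $q^*\mathcal F$ is weakly positive for any finite flat $q$, and \Cref{L:big} shows that $q^*\mathcal B$ is big; then \Cref{L:big-prpty}\ref{L:big-prpty-wpb} gives bigness of $q^*(\mathcal F\otimes\mathcal B)$. For \ref{L:pbig-prpty-det}, we have $\det q^*\mathcal F=q^*\det\mathcal F$, as recorded in the determinants subsection. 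By \Cref{L:big-prpty}\ref{L:big-prpty-det} this line bundle is big, so $\det\mathcal F$ is big by \Cref{L:big}\ref{L:bigV}.
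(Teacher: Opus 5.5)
Your treatment of the three parts matches the paper's proof. You pull back along a finite flat $q:V\to\mathcal X$ and invoke the corresponding part of \Cref{L:big-prpty}, and for (3) you use $\det q^*\mathcal F=q^*\det\mathcal F$ together with \Cref{L:big}.

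The difference is your preliminary independence-of-cover step, which this lemma does not need. Bigness and weak positivity on $\mathcal X$ are defined existentially. In (1) and (3) you can work on the cover $q$ for which $q^*\mathcal F$ is big. In (2) you can work on the cover supplied by \Cref{D:PotWP}, for which $q^*\mathcal F$ is weakly positive. The only sheaves you must control on an arbitrary cover are the line bundles $\mathcal B$ and $\det\mathcal F$, and \Cref{L:big} already handles those.

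Your independence argument looks sound. It combines Viehweg's characterization of bigness via weak positivity of $(\operatorname{Sym}^\gamma F)^{\vee\vee}\otimes L^{-1}$ with the alteration invariance of \Cref{L:WP-prpty}, transported through a resolution of a component of $V_1\times_{\mathcal X}V_2$. Forming the fibre product over $\mathcal X$ is what makes both pullbacks to $V''$ literally the pullback of one sheaf on $\mathcal X$. This proves something the paper does not state: bigness on $\mathcal X$ is independent of the chosen finite flat cover, the bigness analogue of \Cref{L:WPchar}. For this lemma, though, it is overhead, and the step you flag as the main obstacle can be avoided entirely.
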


\begin{proof}
 \ref{L:pbig-prpty-surj} 
 Let $q:V\to \mathcal X$ be a finite flat morphism from a smooth projective variety $V$ such that $q^*\mathcal F$ is big.  By restricting to a smaller open subset, we may assume that $\mathcal F$ and $\mathcal F'$ are locally free over $\mathcal U$.  The morphism $q^*\mathcal F\to q^*\mathcal F'$ induces a surjection on torsion-free quotients over $\mathcal U$.  As $q^*\mathcal F$ is big by assumption, we have that $q^*\mathcal F'$ is big from \Cref{L:big-prpty}\ref{L:big-prpty-surj}. Therefore $\mathcal F'$ is  big.
 
 \ref{L:pbig-prpty-wpb}  Let $q:V\to \mathcal X$ be a finite flat morphism from a smooth projective variety such  that $q^*\mathcal F$ is weakly positive.  
 We have that $q^*\mathcal B$ is big (\Cref{L:big}\ref{L:bigV'}).  It follows that $q^*(\mathcal F\otimes \mathcal B)=q^*\mathcal F\otimes q^*\mathcal B$ is big (\Cref{L:big-prpty}\ref{L:big-prpty-wpb}), so that $\mathcal F\otimes \mathcal B$ is  big.
 
 \ref{L:pbig-prpty-det} 
 Let $q:V\to \mathcal X$ be a finite flat morphism from a smooth projective variety $V$ such that $q^*\mathcal F$ is big. From \Cref{L:big-prpty}\ref{L:big-prpty-det} we have that $\det q^*\mathcal F$ is big.   Let $\mathcal U\subseteq \mathcal X$ be the open substack over which $\mathcal F$ is locally free, which has complement of codimension at least $2$ in $\mathcal X$.  As $q$ is finite, we also have that the complement of $q^{-1}(\mathcal U)$ in $V$ is  of codimension at least $2$.  Therefore, since for locally free sheaves the pull back   and determinant commute, we have that $
q^* \det \mathcal F =   \det q^*\mathcal F
$ is big, which implies that $\det \mathcal F$ is big (\Cref{L:big}).
\end{proof}

\section{$D$-modules and Hodge modules DM stacks} \label{S:DM-HM-stacks}

We now discuss $D$-modules and Hodge modules on DM stacks.   
In general, if one wants to obtain a 6-functor formalism, there are more sophisticated treatments, e.g., \cite{tubach_2024} or \cite{paulin13} for $D$-modules; however, for our purposes here, where we only need to pull back   pure polarizable Hodge modules along non-characteristic morphisms, the more elementary approach we take here, closely following say the treatment of $D$-modules on stacks in \cite{BDstacks}, suffices.  
In this section we work throughout with a smooth separated DM stack $\mathcal X$, locally of finite type over $\mathbb C$.

\subsection{$D$-modules on DM stacks}
We start with the topic of $D$-modules.  There seems to be the standard, albeit brief, reference \cite{BDstacks}, which deals with the more complicated situation of smooth Artin stacks.  The situation for smooth DM stacks is easier, and does not require much additional input beyond the case for smooth varieties.   
We will briefly explain how to extend the presentation in  \cite{HTT08} to the case of DM stacks; we also point the reader to  \cite{BDstacks, Gomez10} for more details, and to \cite{paulin13} for a more sophisticated approach geared towards the $6$-functor formalism.

\subsubsection{Differential operators}

There is a well-defined \emph{(algebraic) tangent sheaf} $\mathcal T_{\mathcal X}$ on the \'etale site of $\mathcal X$.  By definition, for an \'etale morphism $U\to \mathcal X$, this  restricts to sheaf  ${\mathcal T}_{U}$ on the \'etale site $U_{\text{\'et}}$ given by  sections of the tangent bundle $T_U$ to $U$.  In other words, ${\mathcal T}_{U}=\mathcal Der_{\mathbb C_U}(\mathcal O_U)$, i.e., for an \'etale morphism $U'\to U$, 
\begin{align*}
{\mathcal T}_{U}(U')&=\mathcal Der_{\mathbb C}(\mathcal O_{U'}(U'))\\
& = \{\theta \in \mathcal End_{\mathbb C}(\mathcal O_{U'}(U')) : \theta(fg)=\theta(f)g+ f\theta(g)\}.
\end{align*}
The \'etale sheaves $\mathcal T_U$ determine the \'etale sheaf $\mathcal T_{\mathcal X}$ on $\mathcal X$. 
Clearly $\mathcal T_{\mathcal X}$ is a locally free $\mathcal O_{\mathcal X}$-module of rank $\dim \mathcal X$.  We denote by $\Omega^1_{\mathcal X}:=\mathcal T_{\mathcal X}^\vee$ the dual locally free sheaf, i.e., the sheaf of holomorphic $1$-forms, or the \emph{(algebraic) cotangent sheaf}.  
We define the \emph{tangent stack} as 
$$
T{\mathcal X}:=\underline{\operatorname{Spec}}_{\mathcal X}\operatorname{Sym}^\bullet \Omega^1_{\mathcal X}.
$$
This is a smooth DM stack; if $P:U\to \mathcal X$ is an \'etale presentation, then the induced morphism $T_{U}\to T_{\mathcal X}$ is an \'etale presentation of the tangent stack.  We can also consider the \emph{cotangent stack}
$$
T^\vee{\mathcal X}:=\underline{\operatorname{Spec}}_{\mathcal X}\operatorname{Sym}^\bullet {\mathcal T}_{\mathcal X},
$$
which is also a smooth DM stack.

We have a natural inclusion of \'etale sheaves of   ${\mathcal T}_{\mathcal X}$ into $\mathcal End_{\mathbb C_{\mathcal X}}(\mathcal O_{\mathcal X})$, where $\mathbb C_{\mathcal X}$ is the locally constant sheaf on the \'etale site of  $\mathcal X$, and 
we include $\mathcal O_{\mathcal X}$ into $\mathcal End_{\mathbb C_{\mathcal X}}(\mathcal O_{\mathcal X})$ in the natural way, as well,  via multiplication of functions.  We then define $D_{\mathcal X}$, the \emph{sheaf of differential opperators on $\mathcal X$},  as the $\mathbb C_{\mathcal X}$-subalgebra of $\mathcal End_{\mathbb C_{\mathcal X}}(\mathcal O_X)$ generated by $\mathcal O_{\mathcal X}$ and ${\mathcal T}_{\mathcal X}$; we view this as a sheaf of (non-commutative) rings on the \'etale site $\mathcal X_{\text{\'et}}$.

\subsubsection{Order filtration}
Recall that for any smooth affine variety $U$ with local coordinate system $\{x_i,\partial_i\}$ we define the order filtration $F$ of $D_U$ by $F_\ell D_U=\sum_{|\alpha|\le \ell}\mathcal O_U\partial^\alpha_x$, for $\ell\in \mathbb N$ and $|\alpha|=\sum \alpha_i$, with $F_pD_U=0$ for convenience for $p<0$.   
This defines a filtration, which is independent of the choice of local coordinate system (see \cite[\S 1.1]{HTT08}),  and is clearly stable under pull back   by \'etale morphisms.  
 Then for any \'etale morphism $U\to \mathcal X$ we define 
$$
(F_\ell D_{\mathcal X})(U):=\{P\in D_{\mathcal X}(U) : \operatorname{res}^U_{U'}P\in F_\ell D_{\mathcal X}(U')  \text { for any affine \'etale } U'\to U\}
$$
where $\operatorname{res}_{U'}^U:D_{\mathcal X}(U)\to D_{\mathcal X}(U')$ is the natural map associated to the \'etale cover $U'\to U$ coming from viewing $D_{\mathcal X}$ as a sheaf on $\mathcal X_{\text{\'et}}$.

\subsubsection{$D$-modules on stacks}

With the \'etale sheaf of rings $D_{\mathcal X}$ on the \'etale site $\mathcal X_{\text{\'et}}$, we make the following definition of a $D$-module on a stack:

\begin{dfn}[$D$-module] \label{D:DmodXX}
Let $\mathcal X$ be a smooth separated DM stack locally of finite type over $\mathbb C$.  
A \emph{$D$-module on $\mathcal X$} is a  sheaf of (left) $D_{\mathcal X}$-modules on the \'etale site $\mathcal X_{\text{\'et}}$, i.e., a sheaf of   $D_{\mathcal X}$-modules on the ringed site  $(\mathcal X_{\text{\'et}},D_{\mathcal X})$. Morphisms of $D$-modules on $\mathcal X$ are given by morphisms a sheaves of $D_{\mathcal X}$-modules on the ringed site.  
\end{dfn}

 We will often use the terminology \emph{$D_{\mathcal X}$-module} for a $D$-module on $\mathcal X$.  
  
  \begin{rem}
  As $D$-modules 
    satisfy \'etale descent for smooth varieties (see \cite[Prop. 19.4.7 (b)]{KashiwaraSchapira06}), the above definition generalizes the usual notion of $D$-modules on smooth varieties.  
    In other words, if $X$ is a smooth variety and $\mathcal M$ is a $D_{X}$-module in the sense if \Cref{D:DmodXX} (i.e., a sheaf on the \'etale site $X_{\text{\'et}}$), then there is a $D$-module $M$ on $X$ in the usual sense of say \cite[p.17]{HTT08} with associated \'etale sheaf $\mathcal M$.  We will frequently interchange these two perspectives on $D$-modules when working on varieties.
\end{rem}

We denote by $\operatorname{Mod}(D_{\mathcal X})$ the category of (left) $D_{\mathcal X}$-modules.  
Note that, as $D_{\mathcal X}$ is a locally free $\mathcal O_{\mathcal X}$-module, we can view $D_{\mathcal X}$-modules as modules on the ringed site $(\mathcal X_{\text{\'et}},\mathcal O_{\mathcal X})$.  We make the following local definition of (quasi-)coherent $D$-modules:

\begin{dfn}[(Quasi-)coherent $D$-modules on DM stacks]\label{D:DMcoh}
Let $\mathcal X$ be a smooth separated DM stack locally of finite type over $\mathbb C$.  A   $D$-module $\mathcal M$ on $\mathcal X$ is \emph{quasi-coherent (resp.~coherent)} if for every \'etale morphism $U\to \mathcal X$ from a smooth variety $U$ one has that the  induced $D$-module $\mathcal M_U$ is a quasi-coherent (resp.~coherent) $D$-module. 
\end{dfn}

\begin{rem}
Connecting with the standard definition on varieties (e.g., \cite[Not.~1.4.1]{HTT08}), note that from the definition above, a $D_{\mathcal X}$-module is quasi-coherent if and only if it is quasi-coherent as an $\mathcal O_{\mathcal X}$-module.  Similarly, a  $D_{\mathcal X}$-module is coherent if and only if it is coherent as a (left) $D_{\mathcal X}$-module.
\end{rem}

  We denote by 
$\operatorname {Mod}_{qc}(D_{\mathcal X})$ (resp.~$\operatorname{Mod}_c(D_{\mathcal X})$) the full sub-category of $\operatorname{Mod}(D_{\mathcal X})$ consisting of quasi-coherent (resp.~coherent) $D_{\mathcal X}$-modules.

\subsubsection{Descent data}\label{S:descentData}
In the usual way, if $\mathcal M$ is $D_{\mathcal X}$-module and $U\to \mathcal X$ is an \'etale morphism from a smooth variety, we define $\mathcal M_U$ to be the restriction of $\mathcal M$ to the \'etale site of $U$ via the given morphism.  
The definition of $\mathcal M$ implies that for any
 commutative diagram
 \begin{equation}\label{E:DmodDD1}
\xymatrix{
U'\ar[rr]^{\phi}\ar[rd]&&U \ar[ld]\\
&\mathcal X&
}
\end{equation}
of \'etale morphisms with $U'$ and $U$ smooth varieties,
we have a natural isomorphism 
 \begin{equation}\label{E:DmodDD2}
 \theta_{\phi}: \phi^*\mathcal M_U\stackrel{\sim}{\longrightarrow} \mathcal M_{U'}
\end{equation}
and moreover, given a 
commutative diagram
 \begin{equation}\label{E:DmodDD3}
\xymatrix{
U''\ar[r]^{\phi'}\ar[rd]&U'\ar[r]^\phi\ar[d]&U \ar[ld]\\
&\mathcal X&
}
\end{equation}
of \'etale morphisms with $U''$, $U'$,  and $U$ smooth varieties, we have an equality
 \begin{equation}\label{E:DmodDD4}
\theta_{\phi\circ \phi'} = \theta_{\phi'}\circ\phi'^*\theta_\phi.
\end{equation}
In addition, in the usual way, the data above determines a $D_{\mathcal X}$-module.  In other words, if for every \'etale morphism $U\to \mathcal X$ one has a $D$-module $M_U$ on $U$, as well as for every diagram \eqref{E:DmodDD1} isomorphisms $
 \theta_{\phi}: \phi^* M_U\stackrel{\sim}{\longrightarrow} \mathcal M_{U'}
$ satisfying \eqref{E:DmodDD4} for every diagram \eqref{E:DmodDD3}, then there is a unique $D_{\mathcal X}$-module $\mathcal M$ such that $\mathcal M_U=M_U$ for all $U\to \mathcal X$ \'etale.

\begin{rem}[The various notions of pull back   all agree]\label{R:AllPBagree}
As we defined $D$-modules as sheaves of modules on the ringed site $(\mathcal X_{\text{\'et}},D_{\mathcal X})$, the pull back   $\phi^*$  
 in 
 \eqref{E:DmodDD2} is by definition the functor 
 $\phi^*:\mathsf {Mod}(D_U)\to\mathsf {Mod}(D_{U'})$, 
$M_U\mapsto 
 D_{U'}\otimes_{\phi^{-1}D_U}\phi^{-1}M_U$.  
However, as  we are working with $D$-modules, there are a number of other pull back   functors that would make sense in this situation; as the morphism $\phi$ in \eqref{E:DmodDD1} is \'etale, all of these pull back   functors agree.
Indeed, note first that since $\phi$ is \'etale, the pull back   $\phi^*$ defined above at the level of ringed spaces  agrees with the definition of $\phi^*:\mathsf {Mod}(D_U)\to\mathsf {Mod}(D_{U'})$ in \cite[p.21]{HTT08}, where $\phi^*M_U= \mathcal O_{U'}\otimes_{\phi^{-1}\mathcal O_U}\phi^{-1}M_U$, i.e., where the pull back   is taken at the level of $\mathcal O$-modules. 
Indeed,  with $\phi$ \'etale, we have that $D_{U'}\cong \mathcal O_{U'}\otimes_{\phi^{-1}\mathcal O_U}\phi^{-1}D_U$, so that $D_{U'}\otimes_{\phi^{-1}D_{U}}  \phi^{-1}M_U\cong  (\mathcal O_{U'}\otimes_{\phi^{-1}\mathcal O_U}\phi^{-1}D_U)\otimes_{\phi^{-1}D_{U}}  \phi^{-1}M_U \cong  \mathcal O_{U'}\otimes_{\phi^{-1}\mathcal O_U}\phi^{-1}M_U$, so that the two definitions agree. 
  We   also have $\phi^*M_U=L\phi^*M_U$, where $L\phi^*:D^b(D_U)\to D^b(D_{U'})$ is the functor $M^\bullet\mapsto D_{U'\to U} \otimes^L_{\phi^{-1}D_U}\phi^{-1}M^\bullet$ (e.g., \cite[p.32]{HTT08}).   Indeed, we have $L\phi^* M_U=D_{U'\to U} \otimes^L_{\phi^{-1}D_U}\phi^{-1}M_U\cong  \mathcal O_{U'}\otimes^L_{\phi^{-1}\mathcal O_U}\phi^{-1}M_U\cong  \mathcal O_{U'}\otimes_{\phi^{-1}\mathcal O_U}\phi^{-1}M_U\cong \phi^*M_U$, where 
the first isomorphism comes from  \cite[Prop.~1.5.8]{HTT08}, and the second  isomorphism comes from the fact that $\phi$ is flat.
This then implies that  $\phi^*M_U=\phi^\dagger M_U$, where $\phi^\dagger M_U  := L\phi^*M_U[\dim U'-\dim U]$ (e.g., \cite[p.33]{HTT08}).
  Finally, we also have that $\phi^\star M_U=\phi^*M_U$, where $\phi^\star M_U:=\mathbb D_{U'}\phi^\dagger \mathbb D_U$ (e.g., \cite[p.91]{HTT08}); this follows from \cite[Thm.~2.7.1(ii) and Thm.~2.6.5(ii)]{HTT08}.
\end{rem}

We will often use the following concrete interpretation of a $D$-module on $\mathcal X$.  Given a presentation $p:U\to \mathcal X$, then to give a $D$-module on $\mathcal X$ is equivalent to giving a $D$-module $M$ on $U$ with descent data.
Recall that from the presentation of $\mathcal X$, we have a fibered product diagram
\begin{equation}\label{E:DescentData}
\xymatrix@C=.5em@R=1em{
&U_i\times_{\mathcal X}U_j\times_{\mathcal X}U_k \ar[rr]^{pr_{jk}} \ar[ld]_<>(0.6){pr_{ij}} \ar@{-}[d]^<>(0.5){pr_{ik}}&&U_j\times_{\mathcal X}U_k \ar[dd]^{pr_k} \ar[ld]_<>(0.6){pr_{j}}\\
U_i\times_{\mathcal X}U_j\ar[dd]^{pr_i} \ar[rr]^<>(0.25){pr_j}&\ar[d]&U_j\ar[dd]^<>(0.25)p&\\
&U_i\times_{\mathcal X}U_k \ar@{-}[r]^<>(0.5){pr_k} \ar[ld]_{pr_i}&\ar[r]&U_k \ar[ld]^p\\
U_i\ar[rr]_p&&\mathcal X&\\
}
\end{equation}
where $U_i=U_j=U_k=U$ and $p_i=p_j=p_k=p$ are given labels for clarity (or one can consider indexing the components of the cover). The morphisms are the indicated projections, and are all \'etale by base change.  As $p:U\to \mathcal X$ is schematic, all of the spaces in the diagram above are smooth schemes except possibly $\mathcal X$.  There is a well-defined pull back   for  $D$-modules  under \'etale morphisms of varieties (see also \Cref{R:AllPBagree});  to say that $M$ on $U$ has descent data is to say that for every ordered pair  $(\alpha,\beta)$ in $\{(i,j), (i,k), (j,k)\}$, one is given isomorphisms $$\theta_{\alpha\beta}: pr_\beta^*M\stackrel{\sim}{\to}pr_\alpha^*M$$ on $U_\alpha \times_{\mathcal X}U_\beta$ such that 
$$
(pr_{ij}^*\theta_{ij})\circ (pr_{jk}^*)\theta_{jk} = pr_{ik}^*\theta_{ik},
$$
where here the equality is up to the canonical isomorphism of the pull back   of $M$ through the various arrows in the diagram to $U_i\times_{\mathcal X}U_j\times_{\mathcal X}U_k$.  Morphisms are given in the obvious way by morphisms of objects on the $U_i$ that are compatible via pull back   in the diagram above.

\begin{rem}
As being a quasi-coherent $\mathcal O_{\mathcal X}$-module is a local condition, one has that a $D_{\mathcal X}$-module $\mathcal M$ is quasi-coherent if and only if $\mathcal M_U$ is a quasi-coherent $D_U$-module for every \'etale $U\to \mathcal X$ from a smooth variety $U$.  
\end{rem}

\subsubsection{Filtered $D$-modules}
We make the following definition of a filtered $D$-module motivated by the standard definition on varieties (e.g., \cite[p.57]{HTT08}):

\begin{dfn}[Filtered $D$-module] A \emph{filtered $D_{\mathcal X}$-module} $(\mathcal M,F_\bullet)$ is a pair consisting of a quasi-coherent $D_{\mathcal X}$-module  $\mathcal M$ as well as an increasing   filtration $F_i\mathcal M$ ($i \in \mathbb  Z$)  of $\mathcal M$ by coherent $\mathcal O_{\mathcal X}$-submodules  satisfying the conditions: (a) $F_i\mathcal M\subseteq F_{i+1}\mathcal M$, (b) $F_i\mathcal M=0$ for $i\ll 0$, (c) $\mathcal M=\bigcup_{i\in \mathbb Z} \mathcal M_i$, and (d) $(F_jD_{\mathcal X})(F_i\mathcal M)\subseteq F_{i+j}\mathcal M$.  
\end{dfn}

\begin{dfn}[Strict morphism]
A morphism $f:(\mathcal M,F_\bullet) \to (\mathcal N,F_\bullet)$ of filtered $D_{\mathcal X}$-modules is a morphism $f:\mathcal M\to \mathcal N$ of $D_{\mathcal X}$-modules such that $f(F_i\mathcal M)\subseteq F_i\mathcal N$ for all $i$; the morphism is \emph{strict} if it satisfies $f(F_i\mathcal M)= f(\mathcal M)\cap F_i\mathcal N$ for all $i$.  
\end{dfn}

Note that for any \'etale morphism $U\to \mathcal X$ from a smooth variety $U$, there is an induced filtered $D$-module $(\mathcal M_U,F_\bullet)$ obtained by pull back   to $U$ (see \Cref{R:AllPBagree} for a discussion of the identifications of various pull back   morphisms).   
For a filtered quasi-coherent $D_{\mathcal X}$-module $(\mathcal M,F^\bullet)$, we say the filtration is \emph{good} if for every \'etale morphism $U\to \mathcal X$ from a smooth variety $U$ one has that for the induced filtered $D$-module $(\mathcal M_U,F_\bullet)$ the filtration is good  \cite[Prop.~2.1.1 and Def.~2.1.2]{HTT08}. 

We now show that our local definition of coherent $D$-modules (\Cref{D:DMcoh}) implies the existence of a good (global) filtration (\emph{a priori}, from the definition, one only has good filtrations locally).

\begin{pro}\label{Existence-of-good-filtr-on-stack}
A $D_{\mathcal X}$-module $\mathcal M$ is coherent if and only if there exists a good filtration $F_\bullet \mathcal M$ on $\mathcal M$.
\end{pro}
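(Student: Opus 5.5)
For the implication from a good filtration to coherence, I would argue étale locally. Suppose $F_\bullet\mathcal M$ is a good filtration. Then for every étale morphism $U\to \mathcal X$ from a smooth variety, the induced filtration on $\mathcal M_U$ is good by definition. By \cite[Prop.~2.1.1]{HTT08}, $\mathcal M_U$ is then a coherent $D_U$-module. By \Cref{D:DMcoh}, this says exactly that $\mathcal M$ is coherent.

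For the converse I would imitate the proof on varieties: find a coherent $\mathcal O_{\mathcal X}$-submodule $\mathcal G\subseteq\mathcal M$ that generates $\mathcal M$ as a $D_{\mathcal X}$-module, and then set
$$F_i\mathcal M:=(F_iD_{\mathcal X})\cdot\mathcal G \quad (i\ge 0),\qquad F_i\mathcal M=0\quad (i<0).$$
Throughout I assume $\mathcal X$ is quasi-compact, i.e.\ of finite type; this is the setting of all later applications.

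\emph{Step 1: exhaustion by coherent subsheaves.} The sheaf $\mathcal M$ is a quasi-coherent $\mathcal O_{\mathcal X}$-module on a noetherian DM stack. By \cite[Prop.~15.4]{LMB} (equivalently, the corresponding result in \cite{stacks-project} for noetherian algebraic stacks), $\mathcal M$ is the filtered union of its coherent $\mathcal O_{\mathcal X}$-submodules $\mathcal G_\lambda$.

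\emph{Step 2: one subsheaf generates.} For each $\lambda$, consider the $D_{\mathcal X}$-submodule $D_{\mathcal X}\cdot\mathcal G_\lambda$. These form a filtered family whose union is $\mathcal M$. Fix an étale surjection $p:U\to\mathcal X$ with $U$ an affine smooth variety, which exists by quasi-compactness. Then $\mathcal M_U$ is a coherent $D_U$-module, so it is generated by finitely many global sections. Each such section lies in $D_U\cdot(\mathcal G_{\lambda})_U$ for some $\lambda$, and since the family is filtered a single index works for all of them. Hence $(D_{\mathcal X}\cdot\mathcal G_{\lambda_0})_U=\mathcal M_U$ for some $\lambda_0$. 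Since $p$ is surjective and étale, this gives $D_{\mathcal X}\cdot\mathcal G_{\lambda_0}=\mathcal M$. Here I use that forming the $D$-submodule generated by a subsheaf commutes with étale pull back, which is immediate from \Cref{R:AllPBagree}. Put $\mathcal G:=\mathcal G_{\lambda_0}$.

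\emph{Step 3: verify goodness.} Each $F_i\mathcal M$ is the image of $F_iD_{\mathcal X}\otimes_{\mathcal O_{\mathcal X}}\mathcal G\to\mathcal M$. Since $F_iD_{\mathcal X}$ is locally free of finite rank, $F_i\mathcal M$ is a coherent $\mathcal O_{\mathcal X}$-submodule. Conditions (a)--(d) of a filtered $D_{\mathcal X}$-module follow from $F_jD\cdot F_iD\subseteq F_{i+j}D$ together with Step 2. For goodness, pull back along any étale $U\to\mathcal X$: the induced filtration is $F_iD_U\cdot\mathcal G_U$ with $\mathcal G_U$ coherent over $\mathcal O_U$. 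Hence $\operatorname{gr}^F\mathcal M_U$ is generated over $\operatorname{gr}^FD_U$ by the image of $\mathcal G_U$ in degree $0$, so it is locally finitely generated, and the filtration is good by \cite[Prop.~2.1.1]{HTT08}.

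I expect the main obstacle to be Step 2, together with the input to Step 1. These are exactly where a global statement must be extracted from the purely local definition \Cref{D:DMcoh}. The key points are quasi-compactness, which gives a single affine atlas, and the exhaustion of quasi-coherent sheaves on noetherian stacks by coherent subsheaves.
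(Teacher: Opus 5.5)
Your proposal is correct and follows essentially the same route as the paper. Both exhaust $\mathcal M$ by coherent $\mathcal O_{\mathcal X}$-submodules via \cite[Prop.~15.4]{LMB}, pick one $\mathcal G$ that generates $\mathcal M$ over $D_{\mathcal X}$, and set $F_i\mathcal M=(F_iD_{\mathcal X})\cdot\mathcal G$; the only real difference is in Step~2, where you obtain the generating subsheaf from finite generation of $\Gamma(U,\mathcal M_U)$ on an affine atlas together with filteredness, while the paper uses the ascending chain condition for the $D_{\mathcal X}$-submodules $D_{\mathcal X}\cdot\mathcal N_\lambda$ (checked \'etale locally), and your explicit quasi-compactness assumption is one the paper's appeal to \cite[Prop.~15.4]{LMB} also tacitly requires.
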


\begin{proof} The implication ($\impliedby$) is clear.  So 
let $\mathcal{M}$ be a coherent $D_{\mathcal{X}}$-module on  $\mathcal{X}$. From the definitions being local, one has immediately from  \cite[Prop.~1.4.9(ii)]{HTT08} that $D_{\mathcal X}$ is a quasi-coherent $\mathcal O_{\mathcal X}$-module, locally finitely generated as a $D_{\mathcal X}$-module.  
    By \cite[Prop. 15.4]{LMB}, as $\mathcal M$ is a quasi-coherent $\mathcal{O}_{\mathcal{X}}$-module on $\mathcal{X}$, it is the colimit of its coherent $\mathcal O_{\mathcal X}$-subsheaves $\mathcal{N}_{\lambda}$. Consider $\mathcal{M}_{\lambda}:=\operatorname{Im}(D_{\mathcal{X}}\otimes_{\mathcal O_{\mathcal X}} \mathcal{N}_{\lambda} \to \mathcal{M})$. 
    The collection of  $D_{\mathcal X}$-submodules  $\mathcal M_\lambda \subseteq \mathcal{M}$ satisfies the ascending chain condition (ACC); indeed,  we can pull back to an \'etale cover $p:U\to \mathcal{X}$ and there ACC holds (e.g., \cite[Prop. 1.4.6]{HTT08}).
       So, there exists $\mathcal{N}_{\lambda}$, a  coherent $\mathcal{O}_{\mathcal{X}}$-module, for which $\mathcal{M}_{\lambda}=\mathcal{M}$. Then we obtain a filtration $F_{\bullet}$ on $\mathcal{M}$ by $F_k\mathcal{M}:=\operatorname{Im}(F_k D_{\mathcal{X}}\otimes_{\mathcal O_{\mathcal X}} \mathcal{N}_{\lambda}\to \mathcal{M}).$ It is an exhaustive filtration by coherent $\mathcal{O}_{\mathcal{X}}$-modules, it satisfies $F_k\mathcal{M}=0$ for $k<0$, and $F_iD_{\mathcal{X}}\cdot F_k\mathcal{M}=F_{k+i}\mathcal{M}$, for all $i\geq 0, k\geq 0$, and so we obtain a good filtration on $\mathcal M$.
\end{proof}

We now turn our attention to regular and holonomic $D$-modules.  We  note that  $D$-modules (resp.~filtered $D$-modules, resp.~quasi-coherent $D$-modules, resp.~coherent $D$-modules, resp.~regular $D$-modules, resp.~holonomic $D$-modules)  satisfy \'etale descent for smooth varieties (see e.g.,  \cite[Prop. 2.2 and Thm. 2.3]{achar_2013} and the references therein). Having already addressed the case of (quasi-)coherent $D$-modules, we use  this to motivate the following local definition for regular and holonomic $D$-modules on DM stacks:

\begin{dfn}[Regular and holonomic $D$-modules on DM stacks]\label{D:RHDM-stack}
Let $\mathcal X$ be a smooth separated DM stack locally of finite type over $\mathbb C$.  A   $D$-module $\mathcal M$ on $\mathcal X$ is \emph{regular (resp.~holonomic)} if for every \'etale morphism $U\to \mathcal X$ from a smooth variety $U$ one has that the  induced $D$-module $\mathcal M_U$ is a regular (resp.~holonomic) $D$-module.
\end{dfn}

\begin{rem}
Either from the descent results mentioned above, or from the more elementary fact that  regularity (resp.~holonomicity) for $D$-modules on varieties can be checked \'etale locally (see e.g., \cite[Def.~5.2.10 and Lem.~5.1.23]{HTT08} and \cite[Def.~2.3.6]{HTT08}) one sees that \Cref{D:RHDM-stack} agrees with the standard definition of regularity (resp.~holonomicity) when $\mathcal X$ is a variety. 
\end{rem}

We denote by $\operatorname{ Mod}_{c}(D_{\mathcal X})$ (resp.~$\operatorname{Mod}_{h}(D_{\mathcal X})$, resp.~$\operatorname{Mod}_{rh}(D_{\mathcal X})$) the the full sub-category of $\operatorname{Mod}(D_{\mathcal X})$ consisting category of coherent (resp.~regular, resp.~regular holonomic) $D_{\mathcal X}$-modules.

\subsubsection{Characteristic stack}
Let $\mathcal M$ be a coherent $D_{\mathcal X}$-module with a  good filtration $F_\bullet$.
Following \cite[\S 2.2]{HTT08}, let $\pi: T_{\mathcal X}^\vee\to \mathcal X$ be the cotangent bundle of $\mathcal X$. Since we have $\operatorname{gr}^F D_{\mathcal X}\cong \pi_*\mathcal O_{ T^\vee_{\mathcal X}}$, 
the graded module $\operatorname{gr}^F \mathcal M$ of $\mathcal M$ obtained by $F_\bullet$ is a coherent module over $\pi_*\mathcal O_{T^\vee_{\mathcal X}}$ (this can be established \'etale locally on $\mathcal X$, and so follows from \cite[Prop.~2.1.1]{HTT08}). We call the support of the coherent $\mathcal O_{T^\vee_{\mathcal X}}$-module (with its reduced induced structure)  
$$
\widetilde {\operatorname{gr}^F \mathcal M}:=\mathcal O_{T^\vee_{\mathcal X/\mathbb C}}\otimes_{\pi^{-1}\pi_*\mathcal O_{T^\vee_{\mathcal X}}} \pi^{-1}\operatorname{gr}^F \mathcal M
$$
the \emph{characteristic DM stack of $\mathcal M$},  and denote it by $$\operatorname{Ch}(\mathcal M)\subseteq T^\vee_{\mathcal X}.$$

Clearly, from the definitions, a coherent $D_{\mathcal X}$-module is holonomic if and only if $\dim \operatorname{Ch}(\mathcal M)\le \dim \mathcal X$, extending \cite[Def.~2.3.6]{HTT08} to DM stacks.

\subsection{Hodge modules on DM stacks}

The starting point of our discussion is that Hodge modules, polarisable  Hodge modules, etc.,  satisfy \'etale descent (see \cite[Thm. 2.3]{achar_2013}; the argument is based on the argument for perverse sheaves found in \cite[2.2.19]{BBD82}).  Consequently, we give the following definition of Hodge modules on stacks:

\begin{dfn}[Hodge modules on DM stacks]\label{D:HMXX} 
Let $\mathcal X$ be a smooth separated DM stack locally of finite type over $\mathbb C$.  A \emph{pure Hodge module of weight $w$ (resp.~polarisable pure Hodge module of weight $w$, resp.~Hodge module, resp.~polarisable Hodge module) $\mathsf M$ on $\mathcal X$} is a sheaf of  
pure Hodge modules of weight $w$ (resp.~polarisable pure Hodge module of weight $w$, resp.~Hodge module, resp.~polarisable Hodge module) on the \'etale site of $\mathcal X$.  Morphisms are given by morphisms of sheaves.   
\end{dfn}

Concretely, a pure Hodge module  $\mathsf M$ of weight $w$ (resp.~polarisable pure Hodge module of weight $w$, resp.~Hodge module, resp.~polarisable Hodge module) on $\mathcal X$ is equivalent to the data of a pure Hodge module of weight $w$ (resp.~polarisable pure Hodge module of weight $w$, resp.~Hodge module, resp.~polarisable Hodge module) $M_U$ on $U$ for each \'etale morphism $U\to \mathcal X$ from a smooth variety $U$, together with descent data, as described in \S \ref{S:descentData}.  
Note that in this way, a Hodge module $\mathsf M$ on $\mathcal X$ determines a regular holonomic $D_{\mathcal X}$-module $(\mathcal M,F_\bullet)$ together with a compatible $\mathbb Q$-perverse sheaf $ K$ on the associated analytic stack  $\mathcal X^{\operatorname{an}}$.  We will discuss the perverse sheaf further in another paper where it will be needed.  For the purposes of this paper, we will focus on the associated regular holonomic $D$-module.

We denote by  $\mathsf {HM}(\mathcal X, w)$ the category of pure Hodge modules of weight $w$ on $\mathcal X$, and by $\mathsf {HM^p}(\mathcal X,w)$ the category of polarizable pure Hodge modules of weight $w$ on $\mathcal X$.

\subsection{Hodge modules on stacks form an abelian category }
We establish in this subsection that the categories $\mathsf {HM}(\mathcal X, w)$ and $\mathsf {HM^p}(\mathcal X, w)$ are  abelian categories. All the conditions for these to form abelian categories  can be established  \'etale locally;  the 
existence of kernels and cokernels are a little more involved, and so we include some details here.  Let $\phi:\mathsf M\to \mathsf N$ be a morphism of Hodge modules in $\mathsf {HM}(\mathcal X,w)$.  We want to establish the existence of a kernel and cokernel for $\phi$.  Consider a commutative diagram of \'etale morphisms:
\begin{equation}\label{E:abcatdiag}
\xymatrix{
U'\ar[rr]^{q} \ar[rd]_{p'}& & U \ar[ld]^p\\
&\mathcal X
}
\end{equation}
Let $\phi_U: \mathsf M_U\to \mathsf N_U$ and $\phi_{U'}: \mathsf M_{U'}\to \mathsf N_{U'}$ be the morphisms obtained by restriction, from the definition of the category of Hodge modules as \'etale sheaves.   From the theory of Hodge modules on varieties, each of these morphisms have kernels and cokernels.

We claim that there is a commutative diagram:
$$
\begin{tikzcd}
0 \arrow[r] & q^* \ker (\phi_U) \arrow[r] \arrow[d, "\exists!", dotted] &q^* \mathsf M_U \arrow[r, "q^*\phi_U"] \arrow[d, "\simeq"] & q^* \mathsf N_U \arrow[d, "\simeq"] \arrow[r] & q^* \operatorname{coker}(\phi_U) \arrow[d, "\exists!", dotted] \arrow[r] & 0 \\
0 \arrow[r] & \ker(\phi_{U'}) \arrow[r]                                         & \mathsf M_{U'} \arrow[r,"\phi_{U'}"]                               & \mathsf N_{U'} \arrow[r]                               &  \operatorname{coker}(\phi_{U'}) \arrow[r]                                        & 0
\end{tikzcd}
$$
The square in the middle is a canonical identification, via restriction, and the definition of Hodge modules as \'etale sheaves.  Since $\ker (\phi_{U'})\to \mathsf M_{U'}$ is a kernel for $\phi_{U'}$, it follows that there exists a unique morphism $q^*\ker (\phi_{U})\to \ker (\phi_{U'})$ making the diagram above commute; again from the theory on varieties, one has that this is an isomorphism.  From the theory of Hodge modules on varieties, we also know that $q^*\mathsf N_U\to q^*\operatorname{coker}(\phi_U)$ is a cokernel for $q^*\phi_U$, and so there is a unique morphism $q^*\operatorname{coker}(\phi_U)\to \operatorname{coker}(\phi_{U'})$ making the diagram above commute, and this is an isomorphism.

The existence and uniqueness of the dashed arrows in the diagram above imply that for any \'etale presentation $p:U\to \mathcal X$, the kernel and cokernel of $\phi_U:\mathsf M_U\to \mathsf N_U$ have descent data, and therefore define kernels and cokernels in the category $\mathsf {HM}(\mathcal X,w)$. 
(The argument is the same for polarizable Hodge modules.)

\begin{rem}
We also note that the same argument shows that $\operatorname{Mod}_{rh}(D_{\mathcal X})$ is an abelian category, and that kernels and cokernels, etc., are preserved by the forgetful functor $\mathsf {HM}(\mathcal X,w)\to \operatorname{Mod}_{rh}(D_{\mathcal X})$.
\end{rem}

\begin{dfn}[Tate twists]
 We note here that Hodge modules on stacks have Tate twists, defined as follows.  If $\mathsf M=(\mathcal M,F_{\bullet}, K,w)$ is a (polarizable) Hodge module of weight $w$, then the \emph{$r$-th Tate twist} is defined to the (polarizable) Hodge module   $\mathsf M(r):=(\mathcal M,F_{\bullet-r}, K\otimes_{\mathbb{Q}}(2\pi i)^r\mathbb{Q}, w-2r)$ of weight $w-2r$. 
\end{dfn}

\subsection{Strict support and  torsion sub-modules}
We define the support of a  $D_{\mathcal X}$-module $\mathcal M$ to be the support of the sheaf $\mathcal M$.    Let $\mathcal Z\subseteq \mathcal X$ be an integral closed substack of $\mathcal X$.

\begin{dfn}[Strict support]\label{D:strict-sup}
We say that a $D_{\mathcal X}$-module $\mathcal M$ has \emph{strict support $\mathcal Z$} if the support of every nonzero sub-$D_{\mathcal X}$-module or quotient $D_{\mathcal X}$-module of $\mathcal M$ is equal to $\mathcal Z$.
For a Hodge module  $\mathsf M$ on $\mathcal X$ the  \emph{support of $\mathsf M$} is the support of  its underlying  $D_{\mathcal X}$-module.  We say that a Hodge module $\mathsf M$ has \emph{strict support $\mathcal Z$} if the support of every nonzero sub-Hodge module or quotient  Hodge module of $\mathsf M$ is equal to $\mathcal Z$.
\end{dfn}

\begin{lem}[Sub-modules with support]\label{L:subHdgZZ}
Let $\mathsf  M$ be a polarizable Hodge module on $\mathcal X$ of weight $w$ with associated $D_{\mathcal X}$-module $\mathcal M$.  
For any closed reduced substack $\mathcal Z\subseteq \mathcal X$ there is a corresponding polarizable sub-Hodge module $\mathsf M_{\mathcal Z}\subseteq \mathsf M$ of weight $w$  with associated sub-$D_{\mathcal X}$-module $\mathcal H^0_{\mathcal Z}(\mathcal M)\subseteq \mathcal M$, the sub-sheaf of sections of $\mathcal M$ with support in $\mathcal Z$.    Letting $\mathcal M_{\operatorname{tors}} \subseteq \mathcal M$ be the torsion sub-$\mathcal O_{\mathcal X}$-module, there is   polarizable sub-Hodge module $\mathsf M_{\operatorname{tors}}\subseteq \mathsf M$ with associated filtered $D_{\mathcal X}$-module $\mathcal M_{\operatorname{tors}}$, and
$$
\mathsf M_{\operatorname{tors}}=\bigcup_{\mathcal Z\subsetneq \mathcal X}\mathsf M_{\mathcal Z}.
$$
As $\mathsf {HM}^p(\mathcal X,w)$ is abelian, there is a  quotient polarizable Hodge module  $\mathsf M/\mathsf M_{\operatorname{tors}}$ of weight $w$, which has strict support $\mathcal X$.

\end{lem}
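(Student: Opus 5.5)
The plan is to reduce everything to the scheme case through descent, using an étale presentation $p:U\to\mathcal X$ with $U$ a smooth variety (a disjoint union of affines). We work with $U$, the double and triple fiber products $U\times_{\mathcal X}U$ and $U\times_{\mathcal X}U\times_{\mathcal X}U$, and the étale projections between them, as in \eqref{E:DescentData}.

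\textbf{The submodules $\mathsf M_{\mathcal Z}$.} Fix a closed reduced substack $\mathcal Z\subseteq\mathcal X$. For each étale $U\to\mathcal X$, put $Z_U:=\mathcal Z\times_{\mathcal X}U$, which is a closed reduced subscheme since étale base change preserves reducedness. Saito's theory on varieties gives a polarizable sub-Hodge module $(\mathsf M_U)_{Z_U}\subseteq\mathsf M_U$ of weight $w$. Its underlying $D$-module is $\mathcal H^0_{Z_U}(\mathcal M_U)$, and it is a direct summand by the strict support decomposition of polarizable pure Hodge modules: it is the sum of the summands whose strict support lies in $Z_U$.

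The key compatibility is that for an étale $\phi:U'\to U$ over $\mathcal X$ we have
$$\phi^*\mathcal H^0_{Z_U}(\mathcal M_U)=\mathcal H^0_{Z_{U'}}(\mathcal M_{U'}),$$
using $\phi^{-1}(Z_U)=Z_{U'}$ and flatness of $\phi$. The Hodge-module version, $\phi^*\big((\mathsf M_U)_{Z_U}\big)=(\mathsf M_{U'})_{Z_{U'}}$, also holds, because étale pullback preserves strict support decompositions: the strict support of a component pulls back to a union of components with strict supports given by the irreducible components of the preimage.

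Given this, the restriction of the isomorphisms $\theta_\phi$ of \S\ref{S:descentData} gives descent data for the subobjects, and the cocycle condition is inherited from that of $\mathsf M$. This yields the sub-Hodge module $\mathsf M_{\mathcal Z}$. The argument mirrors the one used for kernels in the abelian-category subsection.

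\textbf{Torsion.} Next we identify the torsion. On a smooth variety, by the strict support decomposition, $\mathcal M_{U,\operatorname{tors}}$ is the sum of the summands whose strict support is a proper closed subset. This sum equals $\bigcup_{Z\subsetneq U}\mathcal H^0_Z(\mathcal M_U)$ and underlies a polarizable sub-Hodge module; since $\mathsf M$ is coherent, only finitely many supports occur. The torsion subsheaf of a coherent $\mathcal O$-module is compatible with flat (in particular étale) pullback, so these local sub-Hodge modules descend to $\mathsf M_{\operatorname{tors}}$.

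To get $\mathsf M_{\operatorname{tors}}=\bigcup_{\mathcal Z\subsetneq\mathcal X}\mathsf M_{\mathcal Z}$, let $\mathcal Z$ be the image in $\mathcal X$ of the union of the supports of the torsion summands on $U$. This image is a proper closed substack, since $\mathcal X$ is integral and étale maps are open. Then $\mathsf M_{\operatorname{tors}}=\mathsf M_{\mathcal Z}$, which gives one inclusion; conversely each $\mathsf M_{\mathcal Z}$ with $\mathcal Z$ proper is torsion. The ascending chain condition, checked on $U$, shows the union stabilizes.

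\textbf{The quotient.} The quotient $\mathsf M/\mathsf M_{\operatorname{tors}}$ exists in the abelian category $\mathsf{HM}^p(\mathcal X,w)$. Étale locally it is the sum of the summands of $\mathsf M_U$ with strict support a whole component of $U$. To show it has strict support $\mathcal X$, take a nonzero sub- or quotient Hodge module of it. Its pullback to $U$ is a nonzero sub- or quotient of a module with strict support on the components of $U$. Since $\mathcal X$ is integral, every component of $U$ maps dominantly to $\mathcal X$, so the support on $\mathcal X$ is all of $\mathcal X$.

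\textbf{Main obstacle.} The step I expect to be the main obstacle is the compatibility of strict support decompositions with étale pullback. An irreducible $Z\subseteq U$ can have reducible preimage under $\phi$, and one needs Saito's result that a Hodge module with strict support $Z$ pulls back under an étale map to one whose summands have strict support equal to the components of $\phi^{-1}(Z)$. I would deduce this from the characterization of strict support as "no sub or quotient with smaller support", which is étale local, via the equivalence with intermediate extensions of polarizable variations of Hodge structure.
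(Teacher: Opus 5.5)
Your argument is correct. It follows the same overall strategy as the paper: build the subobjects on \'etale charts and descend them. The key compatibility step, however, is done differently.

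\textbf{The paper's route.} The paper never invokes the strict support decomposition. For \'etale $\phi\colon U'\to U$ over $\mathcal X$, it obtains the comparison map $\phi^*\mathsf H^0_{\mathcal Z}(\mathsf M_U)\to\mathsf H^0_{\mathcal Z}(\mathsf M_{U'})$ from the universal property of the kernel of $\mathsf M_{U'}\to\mathsf M_{U'}/\mathsf H^0_{\mathcal Z}(\mathsf M_{U'})$. It checks that the relevant composite vanishes on underlying $D$-modules, using that this cokernel sits inside $j_*j^*\mathsf M_{U'}$, which has no $\mathcal Z$-torsion, and then uses faithfulness of the forgetful functor. The torsion and quotient assertions are left as easy consequences.

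\textbf{Your route.} You realize the local subobject as a direct summand and prove $\phi^*\mathcal H^0_{Z_U}(\mathcal M_U)=\mathcal H^0_{Z_{U'}}(\mathcal M_{U'})$ by flat base change. This has the merit of showing the comparison map is an isomorphism. The paper's construction, as written, only yields a monomorphism and leaves that point implicit.

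\textbf{Your flagged obstacle is avoidable.} \'Etale pullback is exact, so $\phi^*\bigl((\mathsf M_U)_{Z_U}\bigr)$ is a sub-Hodge module of $\mathsf M_{U'}$. Its underlying $D$-submodule is the same as that of $(\mathsf M_{U'})_{Z_{U'}}$. Two sub-Hodge modules $\mathsf N_1,\mathsf N_2\subseteq\mathsf M_{U'}$ with the same underlying $D$-module coincide, because $\mathsf N_1\to\mathsf M_{U'}\to\mathsf M_{U'}/\mathsf N_2$ vanishes on $D$-modules and hence vanishes. This is exactly the paper's faithfulness trick, and it removes any need to analyze how strict supports behave under \'etale maps.

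\textbf{Two small repairs in the torsion part.}
\begin{enumerate}
\item $\mathcal M$ is only quasi-coherent over $\mathcal O_{\mathcal X}$, not coherent. Justify compatibility of torsion with \'etale pullback by flat base change for $\mathcal H^0_{Z_0}$, together with the fact that an \'etale pullback of a torsion-free quasi-coherent sheaf on a smooth variety stays torsion-free.
\item $p(Z_0)$ is not closed because \'etale maps are open. It is closed because $Z_0=\operatorname{Supp}\mathcal M_{U,\operatorname{tors}}$ is the preimage of the support of the descended $\mathcal M_{\operatorname{tors}}$. It is simplest to take $\mathcal Z:=\operatorname{Supp}\mathcal M_{\operatorname{tors}}$ directly.
\end{enumerate}
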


\begin{proof}
Let $\mathsf  M$ be a polarizable Hodge module on $\mathcal X$ of weight $w$ with associated $D_{\mathcal X}$-module $\mathcal M$.  Let $\mathcal Z$ be a reduced closed substack of $\mathcal X$, 
and let $\mathcal H^0_{\mathcal Z}(\mathcal M)\subseteq \mathcal M$ be the sub-sheaf of sections of $\mathcal M$ with support in $\mathcal Z$. 
Now consider a commutative diagram as in \eqref{E:abcatdiag}.

Let $\mathsf M_U$ be the restriction of the \'etale sheaf $\mathsf M$ to $U$, and let $\mathsf {H}^0_{\mathcal Z}(\mathsf M_U)\subseteq \mathsf M_U$ be the sub-Hodge module supported on the pre-image of $\mathcal Z$. We use the same notation on $U'$.   We have a commutative diagram
$$
\xymatrix{
0\ar[r] & q^*\mathsf {H}^0_{\mathcal Z}(\mathsf M_U)\ar[r] \ar@{.>}[d]_{\exists!}&q^*\mathsf M_U\ar[d]_{\simeq} \\
0\ar[r]& \mathsf {H}^0_{\mathcal Z}(\mathsf M_{U'})\ar[r] &  \mathsf M_{U'}
}
$$
The vertical arrow on the right is the canonical identification coming from the definition of $\mathsf M$ as an \'etale sheaf. Denote by $j:W\hookrightarrow U'$ the complement of preimage of $\Z$. The existence of the arrow on the left comes from the fact that the induced map from $q^*\mathsf {H}^0_{\mathcal Z}(\mathsf M_U)$ to the cokernel of the second row, which is a sub-Hodge module of $j_*j^* \mathsf M_{U'}$, is zero. On the level of $\mathcal D$-modules, this follows from the description of $H^0_{\Z}(M_U)$ as the subsheaf of sections killed by a sufficiently high power of the ideal of the preimage of $\Z$ and that $j_*j^* \mathsf M_{U'}$ has no $\Z$-torsion.  A morphism of Hodge modules is zero if and only if it is zero as a morphism of $D$-modules by the faithfulness of the $\operatorname{rat}$ functor (see, e.g., \cite[p.3]{schnelloverview}).  The uniqueness of the arrow comes from the commutativity of the diagram and the  fact that $\mathsf {H}^0_{\mathcal Z}(\mathsf M_{U'})\hookrightarrow  \mathsf M_{U'}$ is a mono-morphism.

The existence and uniqueness of the dashed arrow in the diagram above implies that for any \'etale presentation $p:U\to \mathcal X$, the sub-Hodge module $\mathsf {H}^0_{\mathcal Z}(\mathsf M_{U})\hookrightarrow   \mathsf M_{U}$ on $U$ has descent data, and therefore defines a sub-Hodge module $\mathsf {H}^0_{\mathcal Z}(\mathsf M):=:\mathsf M_{\mathcal Z}\hookrightarrow   \mathsf M$. The rest of the lemma follows easily from this.
\end{proof}

\subsection{Minimal extensions}

Let $j:U\hookrightarrow X$ be an open dense subset of a smooth variety $X$, and let $\mathsf M$ be a pure polarizable Hodge module of weight $w$ on $U$.  In this situation, there is the so-called minimal extension of $\mathsf M$ to $X$, frequently denoted by $j_{!*}\mathsf M$.  For holonomic $D_X$-modules, this is explained in \cite[Def.~3.4.1]{HTT08} in the case where $j$ is assumed to be affine, and in general in \cite[Def.~6.77]{mustata_intro} where it is called the intermediate extension; for perverse sheaves, this is explained in \cite[Def.~8.2.2]{HTT08} with the $\mathbb Q$-structure discussed in \cite[Rem.~8.1.18]{HTT08}. For Hodge modules see \cite[\S 2]{Saito-IntroMHM}, where minimal extension is called the intermediate direct image. 
We also note for convenience in what follows that the functor $\operatorname{rat}: \operatorname{MHM}(X)\to \operatorname{Perv}(\mathbb Q_X)$ is exact  and faithful (see e.g., \cite[p.223]{HTT08} and \cite[p.3]{schnelloverview}), from which it follows that the forgetful functor from Hodge modules to regular holonomic $D$-modules is exact and faithful.

Note that if $\mathsf M$ has strict support $U$, then the minimal extension $j_{!*}\mathsf M$ is the unique Hodge module with strict support $X$ whose restriction to $U$ is $\mathsf M$  (see \cite[Thm.~6.79 vi)]{mustata_intro} for the underlying $D_X$-module and \cite[Prop.~8.2.7 and Prop.~8.2.5]{HTT08} for the underlying perverse sheaf, with the $\mathbb Q$-structure discussed in \cite[Rem.~8.1.18]{HTT08}).

Finally, we note that the minimal extension defines a functor $j_{!*}:\mathsf {HM}(U,w)\to \mathsf {HM}(X,w)$; in particular, morphisms of Hodge modules on $U$ extend uniquely to morphisms of the minimal extensions.  For $D_X$-modules this is \cite[Thm.~6.79 vii)]{mustata_intro} and for perverse sheaves this is explained in \cite[p.204]{HTT08},  with the $\mathbb Q$-structure discussed in \cite[Rem.~8.1.18]{HTT08}.  The minimal extension functor preserves injections and surjections \cite[Thm.~6.79 iii)]{mustata_intro}.

\begin{lem}[Minimal extension]\label{L:stacky-minimal-extension}
Let $\mathcal U\subseteq \mathcal X$ be a dense open substack with complement $\mathcal Z:=\mathcal X-\mathcal U$. 
\begin{enumerate}[label=(\arabic*)]

\item\label{L:StMinExt1} Let $\mathsf N$ be a pure polarizable Hodge module of weight $w$ on $\mathcal U$. 
Then there exists a unique (up to unique isomorphism) pure polarizable Hodge module $\mathsf M$ of weight $w$ on $\mathcal X$  such that 
$\mathsf M|_{\m{U}}\cong \mathsf N$ and $\mathsf M$ has no sub-module or quotient module supported on $\mathcal Z$.
We call the Hodge module $\mathsf M$ the \emph{minimal extension} of $\mathsf N$.

\item \label{L:StMinExtc}If $\mathsf N$ has strict support $\mathcal U$, then  the minimal extension  $\mathsf M$ has strict support $\mathcal X$.

\item \label{L:StMinExt2} Let $\phi: \mathsf N_1\to \mathsf N_2$ be a morphism of pure polarizable Hodge modules on $\mathcal U$.  There exists a unique extension $\tilde \phi:\mathsf M_1\to \mathsf M_2$ of the morphism $\phi$ to the minimal extensions $\mathsf M_1$ and $\mathsf M_2$ of $\mathsf N_1$ and $\mathsf N_2$.  Moreover, the extension $\tilde \phi$ preserves injections and surjections. 
\end{enumerate}
\end{lem}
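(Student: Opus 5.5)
The plan is to construct the minimal extension étale locally and glue it by descent, as was done above for kernels, cokernels and $\mathsf M_{\mathcal Z}$ (\Cref{L:subHdgZZ}). Fix an étale morphism $p_U:U\to\mathcal X$ from a smooth variety. Set $U^\circ:=p_U^{-1}(\mathcal U)$, a dense open subset, and let $j_U:U^\circ\hookrightarrow U$ be the inclusion. Restricting $\mathsf N$ gives a pure polarizable Hodge module $\mathsf N_{U^\circ}$ of weight $w$, and we set $\mathsf M_U:=j_{U,!*}\mathsf N_{U^\circ}$. By the facts recalled before the lemma, $\mathsf M_U$ is characterized as the unique pure polarizable Hodge module on $U$ that restricts to $\mathsf N_{U^\circ}$ and has no nonzero sub- or quotient module supported on $U-U^\circ$. (On a variety this characterization follows from the decomposition by strict support.)

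The key step is that this characterization is stable under étale pullback. Let $\phi:U'\to U$ be étale over $\mathcal X$. Then $\phi^*\mathsf M_U$ restricts to $\phi^*\mathsf N_{U^\circ}\cong\mathsf N_{U'^\circ}$, using the descent isomorphisms of $\mathsf N$. We must also check that $\phi^*\mathsf M_U$ has no nonzero sub- or quotient module supported on $U'-U'^\circ$. For subobjects, note that $\mathsf H^0_{\mathcal Z}$ commutes with étale pullback: this is the content of the diagram in the proof of \Cref{L:subHdgZZ}, with the dashed arrow an isomorphism since $\mathcal H^0_Z$ commutes with flat pullback. For quotients, dualize, or equivalently use the characterization that $\mathsf M_U$ is the image of $\mathcal H^0 j_!\to\mathcal H^0 j_*$, and that $j_*$ and $j_!$ commute with étale (smooth) base change.

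Uniqueness of the minimal extension on $U'$ then gives canonical isomorphisms $\theta_\phi:\phi^*\mathsf M_U\cong\mathsf M_{U'}$. These extend the descent isomorphisms of $\mathsf N$ and are unique, by the functoriality in \cite[Thm.~6.79 vii)]{mustata_intro}, since morphisms of minimal extensions are determined by their restrictions. Uniqueness forces the cocycle condition \eqref{E:DmodDD4}, so the $\mathsf M_U$ descend to a Hodge module $\mathsf M$ on $\mathcal X$. Polarizability is étale local by \Cref{D:HMXX}. Uniqueness of $\mathsf M$ on $\mathcal X$ follows from the same local uniqueness together with the fact that a sub- or quotient module of $\mathsf M$ supported on $\mathcal Z$ pulls back to one supported on the preimage of $\mathcal Z$. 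The isomorphism with any other candidate is glued from the unique local isomorphisms.

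For part \ref{L:StMinExtc}, let $\mathsf M'$ be a nonzero sub- or quotient module of $\mathsf M$. By construction its support is not contained in $\mathcal Z$, so $\mathsf M'|_{\mathcal U}\neq0$ is a sub- or quotient module of $\mathsf N$. Hence its support contains $\mathcal U$, and since supports are closed, it equals $\mathcal X$.

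For part \ref{L:StMinExt2}, extend $\phi$ locally on each $U$ by the functoriality of $j_{U,!*}$. By uniqueness of the local extensions, these are compatible with the $\theta$'s and therefore descend. Preservation of injections and surjections is étale local, and so follows from \cite[Thm.~6.79 iii)]{mustata_intro}, since kernels and cokernels are computed locally as established above.

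The main obstacle I expect is the stability of minimal extensions under étale pullback, especially the claim that $\phi^*\mathsf M_U$ has no quotient supported on the boundary. I would first try to handle this via $\mathsf M_U=\operatorname{Im}(\mathcal H^0j_!\to\mathcal H^0j_*)$ and base change. If that fails, I would fall back on the duality $\mathbb D$ commuting with étale pullback (\Cref{R:AllPBagree}), which reduces the quotient case to the subobject case.
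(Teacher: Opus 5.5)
Your proposal is correct and follows essentially the same route as the paper: take $j_{!*}$ on an \'etale presentation, extend the descent isomorphisms of $\mathsf N$ uniquely to the minimal extensions, and obtain the cocycle condition (and the uniqueness of $\mathsf M$, hence part (2)) from the absence of sub- or quotient objects supported over $\mathcal Z$. If anything, you are more careful than the paper: when it asserts that the $\theta_{\alpha\beta}$ extend to isomorphisms between $pr_\beta^*\mathsf M_U$ and $pr_\alpha^*\mathsf M_U$, it uses without comment that minimal extension commutes with \'etale pullback, and your argument via $\mathcal H^0_{\mathcal Z}$ and duality (or via the image of $\mathcal H^0j_!\to\mathcal H^0j_*$) supplies exactly that step.
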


\begin{proof}\ref{L:StMinExt1}
 Let $p:U\to \mathcal X$ be an \'etale presentation of $\mathcal X$, and let $\tilde j:\widetilde U:=p^{-1}(\mathcal U)\hookrightarrow U$  be the pre-image of $\mathcal U$.  
Let $\mathsf M_U:=\tilde j_{!*}\mathsf N_{\widetilde U}$ be the minimal extension of $\mathsf N_{\widetilde U}$ to $U$.  We claim that $\mathsf M_U$ has descent data induced by the descent data for $\mathsf N_{\widetilde U}$.  Indeed, in the notation from \S \ref{S:DM-HM-stacks}, the descent data for $\mathsf N_{\widetilde U}$ 
is given by  isomorphisms $$\theta_{\alpha\beta}: pr_\beta^*\mathsf N_{\widetilde U}\stackrel{\sim}{\to}pr_\alpha^*\mathsf N_{\widetilde U}$$ on $U_\alpha \times_{\mathcal X}U_\beta$ such that 
$$
(pr_{ij}^*\theta_{ij})\circ (pr_{jk}^*)\theta_{jk} = pr_{ik}^*\theta_{ik}.
$$
From the references mentioned above for varieties, the $\theta_{\alpha\beta}$ extend uniquely to isomorphisms for $\mathsf M_U$.  If the induced co-cycle condition failed for $\mathsf M_U$, then $(pr_{ij}^*\theta_{ij})\circ (pr_{jk}^*)\theta_{jk} - pr_{ik}^*\theta_{ik}\ne 0$ would  imply there were a non-trivial sub-module (or quotient module) of $\mathsf M_U$ supported on $p^{-1}\mathcal Z$, which is not possible.  Thus $\mathsf M_U$ descends to a Hodge module on $\mathsf M$, with the desired property.
The uniqueness of $\mathsf M_U$ then provides the uniqueness of $\mathsf M$.
This also establishes \ref{L:StMinExtc}.

The argument for \ref{L:StMinExt2} is similar to that of \ref{L:StMinExt1}, by reduction to the case of varieties, and is left to the reader.
\end{proof}

\begin{rem}\label{R:MinExtRegHol}
The proof of \Cref{L:stacky-minimal-extension} is made for Hodge modules on $\mathcal X$, but the proofs work identically for regular holonomic $D_{\mathcal X}$-modules using the same argument.
\end{rem}

We will also be interested in the \emph{filtered} regular holonomic $D$-module underling the minimal extension of a Hodge module.  For this we introduce the following terminology.  We say a filtered regular holonomic $D$-module on $\mathcal X$ \emph{admits a $\mathbb Q$-structure \'etale locally} if there exists an \'etale presentation $p:U\to \mathcal X$ such that the filtered regular holonomic $D$-module $(p^*\mathcal M,p^*F_\bullet)$ on the variety $U$ admits a $\mathbb Q$-structure.  We will use the notation $(p^*M,p^*F,K)$ for the filtered regular holonomic $D$-module with $\mathbb Q$-structure, where $K$ is the $\mathbb Q$-perverse sheaf on $U$ giving the $\mathbb Q$-structure.  
   For a filtered regular holonomic $D$-module $(\mathcal M,F_\bullet)$ on $\mathcal X$ that admits a $\mathbb Q$-structure \'etale locally, and an effective divisor $\mathcal D\subseteq \mathcal X$, we say that $(\mathcal M,F_\bullet)$ is \emph{quasi-unipotent (resp.~regular)  along $\mathcal D$} if $(p^*\mathcal M,p^*F_\bullet,K)$ is quasi-unipotent (resp.~regular)  along the divisor  $p^*\mathcal D\subseteq U$ (e.g., \cite[Def.~11.4]{schnelloverview}). One can check that these conditions are independent of the choice of presentation $p:U\to \mathcal X$, as all of the conditions are \'etale local for varieties.

 Note that by definition, the underlying filtered regular holonomic $D$-module of a pure polarizable Hodge module on $\mathcal X$ admits a $\mathbb Q$-structure \'etale locally, and is quasi-unipotent and regular along any effective divisor $\mathcal D\subseteq \mathcal X$.

\begin{lem}\label{L:ExtFilt}
Let $\mathcal U\subseteq \mathcal X$ be a dense open substack,  let $\mathsf N$ be a pure polarizable Hodge module of weight $w$ on $\mathcal U$ with strict support $\mathcal U$, let $(\mathcal N,F_\bullet)$ be the underlying filtered regular holonomic $D$-module on $\mathcal U$,  let $\mathsf M$ be the minimal extension of $\mathsf N$ to $\mathcal X$, and let $(\mathcal M,F_\bullet)$ be the underlying filtered regular holonomic $D$-module on $\mathcal X$.  Let $\mathcal D\subseteq \mathcal X$ be any effective divisor containing the complement $\mathcal X-\mathcal U$.   Then $(\mathcal M,F_\bullet)$ is the unique filtered regular holonomic $D$-module on $\mathcal X$ with strict support $\mathcal X$, which admits a $\mathbb Q$-structure \'etale locally, is quasi-unipotent and regular along $\mathcal D$, and   whose restriction to $\mathcal U$ is isomorphic to $(\mathcal N,F_\bullet)$.  
\end{lem}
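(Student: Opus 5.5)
The plan is to reduce to the corresponding statement on varieties via an étale presentation, and then use étale descent to return to the stack. The variety statement is Saito's characterization of the filtered $D$-module underlying a minimal extension; see e.g.\ \cite[\S 11]{schnelloverview} and \cite{Saito-IntroMHM}.

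\emph{Existence.} We first check that $(\mathcal M,F_\bullet)$ has all the stated properties. By \Cref{L:stacky-minimal-extension}\ref{L:StMinExtc}, $\mathsf M$ has strict support $\mathcal X$. Fix an étale presentation $p:U\to\mathcal X$ and set $\widetilde U:=p^{-1}(\mathcal U)$. The construction in the proof of \Cref{L:stacky-minimal-extension} gives $\mathsf M_U=\tilde j_{!*}\mathsf N_{\widetilde U}$. This is a polarizable Hodge module on the variety $U$, so its underlying filtered $D$-module admits a $\mathbb Q$-structure and is quasi-unipotent and regular along $p^*\mathcal D$; this is the remark preceding the lemma. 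Its restriction to $\mathcal U$ is $(\mathcal N,F_\bullet)$ by construction.

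For strict support of the underlying $D$-module, as opposed to the Hodge module, I would argue étale locally. A sub- or quotient $D_{\mathcal X}$-module supported on a proper closed substack pulls back to one on $U$ supported on a proper closed subset. On the variety $U$, the $D$-module underlying $\tilde j_{!*}$ of a Hodge module with strict support has no such sub- or quotient (\cite[Thm.~6.79]{mustata_intro}). Here one uses that strict support of $\mathsf N$ forces strict support of $\mathsf N_{\widetilde U}$ componentwise; this follows from \Cref{L:subHdgZZ} together with descent of the subobjects $\mathsf H^0_{\mathcal Z}$.

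\emph{Uniqueness.} Let $(\mathcal M',F'_\bullet)$ be another filtered module with the listed properties and with an isomorphism $\alpha:(\mathcal M',F')|_{\mathcal U}\cong(\mathcal N,F)$. The steps are:
\begin{enumerate}[label=(\roman*)]
\item Both $\mathcal M$ and $\mathcal M'$ have strict support $\mathcal X$ and agree on $\mathcal U$. By \Cref{R:MinExtRegHol}, the uniqueness of minimal extensions for regular holonomic $D_{\mathcal X}$-modules gives a unique isomorphism $\tilde\alpha:\mathcal M'\cong\mathcal M$ of $D_{\mathcal X}$-modules extending $\alpha$.
\item To see that $\tilde\alpha$ respects filtrations, pull back to $U$. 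There $(p^*\mathcal M',p^*F')$ admits a $\mathbb Q$-structure, is quasi-unipotent and regular along $p^*\mathcal D$, has strict support, and restricts to $\mathsf N_{\widetilde U}$ on $\widetilde U$.
\item Saito's theorem on varieties now applies: the Hodge filtration of a minimal extension is determined by its restriction via the $V$-filtration formula $F_p\mathcal M=\sum_i F_i D\cdot(V^{>-1}\cap j_*F_{p-i})$, or equivalently by the fact that quasi-unipotent regular filtered extensions with strict support are unique. Hence $p^*\tilde\alpha$ is a filtered isomorphism.
\item Being filtered is a local condition on coherent subsheaves, so $\tilde\alpha$ is itself a filtered isomorphism.
\end{enumerate}

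\emph{Main obstacle.} I expect the main difficulty to be step (iii). Saito's uniqueness statement is usually formulated for $\mathcal D$ a hypersurface given locally by a function $f$, using the $V$-filtration along the graph of $f$ together with the conditions of quasi-unipotence and regularity along $f$. It must be checked that the hypothesis ``along the divisor $p^*\mathcal D$'' covers every local equation. It must also be checked that the formula depends only on the restriction to $\widetilde U$ and not on the $\mathbb Q$-structure beyond its existence. I would first try to verify this locally on affine opens of $U$ where $p^*\mathcal D=(f=0)$, applying \cite[Def.~11.4]{schnelloverview} there. Compatibility of the resulting local filtrations is automatic, since they are all subsheaves of the single sheaf $p^*\mathcal M$.
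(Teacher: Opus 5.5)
Your proposal follows the paper's proof: existence is by construction, and for uniqueness you identify the underlying $D$-modules via \Cref{R:MinExtRegHol}, pass to an \'etale presentation, and compare the filtrations using Saito's uniqueness result for regular, quasi-unipotent extensions with strict support, exactly as the paper does by citing \cite[\S 11]{schnelloverview}. One caution on step (iii): the displayed $V$-filtration formula is only available a priori for the Hodge filtration $F$, because it relies on strict surjectivity of $\mathrm{can}$, which is not among the hypotheses on $F'$; so run the comparison as a sandwich: the $t$-condition for both filtrations gives $F'=F$ on $V^{>-1}$, on $\operatorname{gr}_V^{-1}$ one has $\partial_t F_{\bullet-1}\operatorname{gr}_V^{0}\subseteq F'_\bullet\operatorname{gr}_V^{-1}\subseteq t^{-1}(F_\bullet\operatorname{gr}_V^{0})$ with both bounds equal to $F_\bullet\operatorname{gr}_V^{-1}$ because $\mathrm{can}$ and $\mathrm{Var}$ are strict for $\mathsf M$, and the $\partial_t$-condition then determines the remaining graded pieces.
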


\begin{proof}
By construction we have that $(\mathcal M,F_\bullet)$ is a 
filtered regular holonomic $D$-module on $\mathcal X$ with strict support $\mathcal X$, which admits a $\mathbb Q$-structure \'etale locally, is quasi-unipotent and regular along $\mathcal D$, and   whose restriction to $\mathcal U$ is isomorphic to $(\mathcal N,F_\bullet)$.  

 So it suffices to show uniqueness.  Let $(\mathcal M',F_\bullet)$ be another such $D$-module.  From \Cref{R:MinExtRegHol}, we have that $\mathcal M$ and $\mathcal M'$ agree, and so we have reduced to showing that the filtrations agree.  We can check this after an \'etale cover of $\mathcal X$, and so we have reduced to the case of varieties.
  This is discussed in \cite[\S 11]{schnelloverview}, and we review this here.

The question is local, so we may assume we are working on a smooth variety $X$, with an open subset $U\subseteq X$.  The complement $Z= X-U$ is closed, and so there is some non-constant algebraic function $f:X\to \mathbb C$ with zero locus $V(f)$ containing $Z$; for simplicity of notation, we will assume that $Z=V(f)$.  As  $\mathsf M$ is a Hodge module on $X$ with strict support $X$, one has by definition that $(\mathcal M,F_\bullet)$ is a filtered regular holonomic $D$-module on $X$ with strict support $X$,
which admits a $\mathbb Q$-structure, and  is quasi-unipotent and regular along $Z$.  
  The arguments in \cite[\S 11]{schnelloverview} show that such an  $(\mathcal M,F_\bullet)$ is uniquely determined by its restriction to $U$, completing the proof.  
\end{proof}

\subsection{Singular support} Let $\mathsf M$ be a pure Hodge module of weight $w$ on a smooth variety $X$, and assume that $\mathsf M$ has strict support $X$.  Recall that the singular support of $\mathsf M$ is the locus on $X$ where the associated $D_X$-module $M$ is not coherent as an $\mathcal O_X$-module. 
This motivates the following definition:

\begin{dfn}[Singular support]\label{D:sing-supp}
    The \emph{singular support} of a Hodge module $\mathsf M$ on $\mathcal X$, with strict support $\mathcal X$,  
    is the locus where the associated $D_{\mathcal X}$-module $\mathcal M$ is not coherent over $\mathcal O_{\mathcal X}$.
\end{dfn}

As in the case of varieties, we have the following:

\begin{pro} \label{P:Sing-support-is-pure-cod-1}
    Let $\mathsf M$ be a pure polarizable Hodge module of weight $w$ on $\mathcal X$, and assume that $\mathsf M$ has strict support $\mathcal X$.  The singular support of $\mathsf M$ is either empty or of pure codimension $1$.
\end{pro}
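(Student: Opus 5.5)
The plan is to reduce to the case of varieties via an étale presentation. Let $p:U\to\mathcal X$ be an étale presentation by a smooth scheme $U$ (a disjoint union of smooth varieties). By definition, $\mathsf M$ restricts to a pure polarizable Hodge module $\mathsf M_U$ of weight $w$ on $U$, and its underlying $D_U$-module is $\mathcal M_U=p^*\mathcal M$ (\Cref{R:AllPBagree}). The first step is to check that the singular support is compatible with étale pullback. Since $p$ is étale, hence flat, a quasi-coherent $\mathcal O_{\mathcal X}$-module is coherent near a point exactly when its pullback is coherent near every preimage point. Consequently the singular support of $\mathsf M$ pulls back to the locus in $U$ where $\mathcal M_U$ fails to be $\mathcal O_U$-coherent, so
$$
p^{-1}(\operatorname{SS}(\mathsf M))=\operatorname{SS}(\mathsf M_U).
$$
Because $p$ is étale and surjective, codimension is preserved under preimage, and emptiness is detected after pullback. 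It therefore suffices to show that $\operatorname{SS}(\mathsf M_U)$ is empty or of pure codimension $1$ on each connected component of $U$.

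The second step is to verify that $\mathsf M_U$ has strict support on each component of $U$; this is the main point needing care. Suppose a component $U_0$ of $U$ has a nonzero sub-Hodge module $\mathsf N\subseteq\mathsf M_{U_0}$ with proper support $Z\subsetneq U_0$. Then $\mathcal H^0_{Z}$ is nonzero, which shows that the torsion sub-$\mathcal O$-module $(\mathcal M_U)_{\operatorname{tors}}$ is nonzero. Torsion is étale local, so by \Cref{L:subHdgZZ} (applied with the closed substack given by the image of $Z$, which is a proper closed substack since $p$ is étale and hence open and quasi-finite) $\mathsf M_{\operatorname{tors}}\neq 0$, contradicting strict support $\mathcal X$. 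For quotients, I would use polarizability: $\mathsf M_{U_0}$ is semisimple, so any quotient supported on $Z$ is also a direct summand, hence a sub-Hodge module. This reduces the quotient case to the sub case. Alternatively, one can use the description of strict support as "no sub or quotient supported on a proper closed subset" together with the minimal-extension characterization (\Cref{L:stacky-minimal-extension}).

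The final step is the variety statement: for a pure polarizable Hodge module with strict support a smooth variety $X$, the locus where $M$ is not $\mathcal O_X$-coherent is empty or of pure codimension $1$. By Saito's structure theorem, $\mathsf M$ is generically a polarizable VHS on a maximal open set $V\subseteq X$ where $M$ is a vector bundle with flat connection, and the singular support is $X\setminus V$. I would cite this (e.g.\ \cite{schnelloverview}). If a self-contained argument is wanted, the key ingredient is a Zariski–Nagata/Hartogs-type extension. Over the complement of a codimension-$\ge 2$ subset, a VHS extends across it, since local systems extend over codimension $2$ by the fundamental group, and Hodge bundles extend by reflexive hull/Griffiths. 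By uniqueness of minimal extensions (\Cref{L:ExtFilt}), $M$ then agrees with this smooth extension, so $M$ is coherent there. Therefore every component of $X\setminus V$ has codimension exactly $1$.
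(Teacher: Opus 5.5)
Your proof is correct in outline but takes a different route for the variety case; it also needs a small fix in Step 2 and some tightening in Step 3.

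\textbf{Reduction to varieties.} This is the paper's approach; the paper just says the statement is \'etale local. Your Step 2 usefully supplies the strict-support check the paper leaves implicit. One fix: $p(Z)$ need not be closed, since \'etale maps are open but not closed. Take $\mathcal Z=\overline{p(Z)}$ instead; it is still a proper closed substack because $p$ is quasi-finite.

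\textbf{The variety case.} The paper never extends anything across $Z$. After discarding divisorial components so that $\operatorname{codim} Z\ge 2$, strict support gives $M=j_{!*}(M|_U)\subseteq \mathcal H^0 j_{D*}(M|_U)=j_*(M|_U)$, i.e.\ $\mathcal H^0_Z(M)=0$. Since $M|_U$ is locally free and $\operatorname{codim}Z\ge 2$, $j_*(M|_U)$ is $\mathcal O_X$-coherent (Tag 0BK3 of \cite{stacks-project}). So $M$, a subsheaf of a coherent sheaf, is coherent. This uses no Hodge theory at all.

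Your route instead builds a smooth extension and invokes uniqueness of minimal extensions. That can work, but three points need attention.
\begin{enumerate}
\item Saito's structure theorem, as usually stated, only gives a VHS on some dense open set. It does not say the maximal such set has divisorial complement, which is why the paper writes out an argument. So your citation option is not available.
\item Reflexive hulls of the Hodge bundles need not be subbundles, since reflexive sheaves need not be locally free in dimension $\ge 3$. To extend the whole VHS you genuinely need Griffiths' removable-singularity theorem for period maps with trivial local monodromy.
\item More simply, the Hodge filtration is irrelevant for coherence. Extend the local system, which is possible because removing codimension $\ge 2$ does not change $\pi_1$. Take the associated regular flat bundle $E$; it has no sub or quotient supported on $Z$. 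Then $E\cong M$ by uniqueness of minimal extensions of regular holonomic $D$-modules (\Cref{R:MinExtRegHol}).
\end{enumerate}

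Repaired via the second or third point, your argument yields a more conceptual statement: the flat bundle, and indeed the VHS, extends across codimension two. The paper's argument is shorter and needs only $\mathcal H^0_Z(M)=0$ plus Hartogs-type extension of coherent sheaves.
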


\begin{proof}
This is \'etale local, and therefore follows from the case of varieties.  
For lack of a suitable reference, we recall the argument for varieties here. 
Let $\mathsf M$ be a pure polarizable Hodge module of weight $w$ on a variety $X$, and assume that $\mathsf M$ has strict support $X$.
 Let $Z\subseteq X$ be the singular support of $\mathsf M$.  By restricting to an open set, we can remove the divisorial components, and thus assume that the singular support has codimension at least $2$.
 Let $U= X-Z$ be the complement, and let $j:U\hookrightarrow X$ be the inclusion.  
 
 From the references in the section on minimal extensions above, we have that $\mathsf M=j_{!*}(\mathsf M|_U)$, i.e., $\mathsf M$ is the minimal extension of its restriction to $U$. Here we are using the notation $M|_U=j^*M$. By definition of the minimal extension as $j_{!*}(M|_U):=\operatorname{Im}\left(\mathcal{H}^0j_{D!} (M|_U)\to \mathcal{H}^0j_{D*} (M|_U)\right)$, where, to avoid confusion in what follows,  we are using the notation $j_{D!}$ and $j_{D*}$  for the functors  $j_!$ and $j_*$ defined on the derived category of $D$-modules (denoted $\int_{f!}$ and $\int_f$ in \cite{HTT08}), we know that $M\subseteq  \mathcal{H}^0j_{D*} (M|_U)$.  
Now we observe that for an open immersion, the functor $j_{D*}$ agrees with the derived functor $Rj_*$ of the standard push-forward functor $j_*$ of sheaves of $D$-modules (e.g., \cite[Exa.~1.5.22 and p.40]{HTT08}).  Note that while $j_*$ is just the push-forward at the level of sheaves, and so agrees with the functor $j_*$ of sheaves of $\mathcal O_U$-modules, the functor $Rj_*$ is the derived functor in the category of $D$-modules.  
The conclusion is that $ \mathcal{H}^0j_{D*} (M|_U)=  \mathcal{H}^0Rj_{*} (M|_U) =j_*(M|_U)$, so that we have $M\subseteq j_*(M|_U)$, and this is an inclusion of $\mathcal O_X$-modules.  

Next we claim that $j_*(M|_U)$ is a coherent $\mathcal O_X$-module; this essentially follows from \cite[\href{https://stacks.math.columbia.edu/tag/0BK3}{Prop.~0BK3}]{stacks-project} since $M|_U$ is a coherent $\mathcal O_U$-module by assumption, and  $\operatorname{codim}_XZ\ge 2$. In more detail, 
from say \Cref{L:subHdgZZ}, we know that $M|_U$ has  a unique associated prime $x$, the generic point of $U$.  Consequently, we have that $\overline{\{x\}}= X$, which is smooth and projective, so that, assuming $Z$ is non-empty, then  for any $z\in Z\cap \overline{\{x\}}=Z$, there is a unique associated prime $\mathfrak p$ of the completion $\widehat{\mathcal O}_{X,z}$, namely, the minimal prime, and so we have that $\dim (\widehat{\mathcal O}_{X,z}/\mathfrak p) =\dim (\widehat{\mathcal O}_{X,z}) \ge 2$.  It then follows from \cite[\href{https://stacks.math.columbia.edu/tag/0BK3}{Prop.~0BK3}]{stacks-project} that $j_*(M|_U)$ is a coherent $\mathcal O_X$-module. 

Finally, as $M$ is a sub-module of  $j_*(M|_U)$ (and $X$ is locally Noetherian), we have that $M$ is coherent, as well.  In other words, $Z$ is the empty set.
\end{proof}

\section{Preliminaries on non-characteristic morphisms of DM stacks}
We will want to establish some positivity results for Hodge modules on  DM stacks by pulling back the Hodge modules via a finite flat morphism to a smooth projective variety.  In general, pull back   of Hodge modules should be done at the level of the derived category; but for flat non-characteristic morphisms $f$, one has that the pull back   on the associated filtered $D$-modules agrees with the usual pull back   of sheaves of $\mathcal O$-modules. In other words, for flat non-characteristic morphisms, the pull back   $f^!$  is the naive pull back  .  For varieties, this follows from \cite[Thm.~2.4.6]{HTT08} and \cite[Lem.~3.5.3]{saito_RIMS_88}.

As a brief outline, in this section we extend the definition of a non-characteristic morphism to morphisms of  smooth DM stacks (\Cref{D:nonchar}), and explain that for flat non-characteristic morphisms the pull back   of Hodge modules agrees with the naive pull back   (\Cref{P:f*nonchar} and \Cref{L:ffNCpbH}).  In the next section we will use this to  establish that given a Hodge module on a smooth DM stack with projective coarse moduli space, there exists a finite flat non-characteristic cover  with respect to the Hodge module.

\subsection{Non-characteristic morphisms}

Given a morphism $f:\mathcal X\to \mathcal Y$ of smooth separated DM stacks of finite type over $\mathbb C$, we have a diagram
$$
\xymatrix{
\mathcal X \ar[d]_f& \mathcal  X   \times _{\mathcal Y} T^\vee\mathcal Y \ar[l]_<>(0.5){p_1} \ar[d]^{p_2} \ar[r]^<>(0.5){df}& T^\vee\mathcal X\\
\mathcal Y & \ar[l]_p T^\vee\mathcal Y
}
$$

Following \cite[3.5.1]{saito_RIMS_88}, we make the following definition:

\begin{dfn}[Non-characteristic]\label{D:nonchar} 
Given a coherent $D$-module $(\mathcal M,F_{\bullet})$ on $\mathcal Y$, with a choice of good filtration $F_{\bullet}$,  we say that $f$ is \emph{non-characteristic} with respect to $(\mathcal M,F_{\bullet})$ if the morphism
\begin{equation}\label{E:D:nonchar} 
df: p_2^{-1}\operatorname{Ch}(\mathcal M) \to T^\vee\mathcal X
\end{equation}
is finite and   $\mathcal {H}^i (f^{-1}\operatorname{Gr}^F \mathcal M \otimes_{f^{-1}\m{O}_{\Y}}^L \m{O}_{\X})=0$ for all $i\not = 0$.
\end{dfn}

\begin{rem}
We will always be considering non-characteristic morphisms in the situation where $f$ is flat, in which case the condition that $\mathcal {H}^i (f^{-1}\operatorname{Gr}^F \mathcal M \otimes_{f^{-1}\m{O}_{\Y}}^L \m{O}_{\X})=0$ for all $i\not = 0$ is always satisfied.   Note that in \cite[Def.~2.4.2]{HTT08}, where the authors address the non-characteristic condition for coherent $D$-modules without a specific choice of good filtration, the authors do not require Saito's  filtration specific condition, i.e., that   $\mathcal {H}^i (f^{-1}\operatorname{Gr}^F \mathcal M \otimes_{f^{-1}\m{O}_{\Y}}^L \m{O}_{\X})=0$ for all $i\not = 0$.

\end{rem}

  Note that since every fiber $\operatorname{Ch}(\mathcal M)_y\subseteq T_y^*\mathcal Y$ is a cone, the finiteness of \eqref{E:D:nonchar} is equivalent to having 
\begin{equation}\label{E:non-ch}
p_2^{-1}\operatorname{Ch}(\mathcal M)\cap \ker (df) = 0,
\end{equation}
where $0$ indicates the zero section of the pulled back vector bundle  $\mathcal X\times_{\mathcal Y}T^\vee\mathcal Y$;   note also that this condition can be checked \'etale locally on $\mathcal Y$.

Given a $\mathbb C$-point $x$ of $\mathcal X$, let $y=f(x)$.  We have a natural identification 
$$(df)_x= (d_yf): (\mathcal X\times_{\mathcal Y}T^\vee\mathcal Y)_x = T^\vee_y\mathcal Y \longrightarrow T^\vee_x\mathcal X
$$
of $df$ at $x$ with the differential of  cotangent spaces, 
as well as a natural identification of the cones  $(p_2^{-1}\operatorname{Ch}(\mathcal M))_x= \operatorname{Ch}(\mathcal M)_y$.  
We will, consequently, say that \emph{$f$ is non-characteristic with respect to $\mathcal M$ at $x$} if for the cone  $\operatorname{Ch}(\mathcal M)_y\subseteq T_y^*\mathcal Y$ we have 
\begin{equation}\label{E:non-chy}
\operatorname{Ch}(\mathcal M)_y\cap \ker (d_yf)=0 \subseteq T^\vee_y\mathcal Y.
\end{equation}

\subsubsection{Pull back   of $D$-modules via schematic flat non-characteristic morphisms}

Let $f:\mathcal X\to \mathcal Y$ be a morphism of smooth separated DM stacks locally of finite type over $\mathbb C$.  We define the transfer bi-module
$$
D_{\mathcal X\to \mathcal Y}:= \mathcal O_{\mathcal X}\otimes _{f^{-1}\mathcal O_{\mathcal Y}}f^{-1}D_{\mathcal Y},
$$
and observe that this is a left $D_{\mathcal X}$-module and right $f^{-1}D_{\mathcal Y}$-module.  Following \cite[p.21]{HTT08}, we make the following definition:

\begin{dfn}[Naive pull back   of $D$-modules]\label{D:fpb-DM}
Given a (left) $D_{\mathcal Y}$-module $\mathcal M$, we define the \emph{pull back} of $\mathcal M$ as the (left)  $D_{\mathcal X}$-module 
$$
f^*\mathcal M:= D_{\mathcal X\to \mathcal Y}\otimes_{f^{-1}D_{\mathcal Y}} f^{-1}\mathcal M= \mathcal O_{\mathcal X}\otimes _{f^{-1}\mathcal O_{\mathcal Y}}f^{-1}\mathcal M.
$$
\end{dfn}

\begin{rem}
If $\mathcal M$ is quasi-coherent, then $f^*\mathcal M$ is quasi-coherent; in fact we have a functor
$$
f^*:\operatorname{Mod}_{\operatorname{qc}}(D_{\mathcal Y})\longrightarrow \operatorname{Mod}_{\operatorname{qc}}(D_{\mathcal X}).
$$
This is clear from the definition.
\end{rem}

\begin{rem}\label{R:Lf*=f*}
 Let $f:X\to Y$ be a morphism of smooth varieties.  
There is a pull back functor $Lf^*:D^b_{qc}(D_{ Y})\to D^b_{qc}(D_{ X})$, $Lf^*M^\bullet := D_{X\to Y}\otimes^L_{f^{-1}D_{Y}}f^{-1} M^\bullet $, defined at the level of derived categories, where we denote by $D^b(D_{\bullet})$ the derived category of bounded complexes of $D$-modules,  and by $D^b_{qc}(\bullet)$ the full sub-category consisting of complexes whose cohomology sheaves are quasi-coherent (see e.g., \cite[p.32]{HTT08}). 
If $f$ is flat, and $M$ is a quasi-coherent $D_{Y}$-module, then we have an identification
$$
Lf^*\mathcal M=H^0(Lf^* M)= f^*M.
$$
Indeed, as in \cite[Prop.~1.5.8]{HTT08}, we have 
$Lf^*M= \mathcal O_X\otimes ^L_{f^{-1}\mathcal O_Y}f^{-1}M= \mathcal O_X\otimes_{f^{-1}\mathcal O_Y}f^{-1}M$, as $f$ is flat.   
\end{rem}

In general, for pulling back filtered $D$-modules, one should use the pull back   at the level of derived categories (\Cref{R:Lf*=f*}).  However, here we will only need to pull back   along flat non-characteristic morphisms, where for filtered $D$-modules one can still use the naive pull back:

\begin{pro}\label{P:f*nonchar}
Let $f:\mathcal X\to \mathcal Y$ be a schematic flat morphism of smooth separated DM stacks locally of finite type over $\mathbb C$, and let $\mathcal M$ be a coherent $D_{\mathcal Y}$-module.  Assume that $f$ is non-characteristic with respect to $\mathcal M$ (for instance, if $f$ is smooth).  
\begin{enumerate}[label=(\alph*)]
\item\label{E:f*nonchar-coh}  $f^*\mathcal M$ (\Cref{D:fpb-DM}) is a coherent $D_{\mathcal X}$-module.

\item \label{E:f*nonchar-filt} If $F_\bullet \mathcal M$ is a good filtration on $\mathcal M$, then $f^*F_\bullet \mathcal M$ is a good filtration on $f^*\mathcal M$.

\item \label{E:f*nonchar-Ch} $\operatorname{Ch}(f^*\mathcal M)=df(p_2^{-1}\operatorname{Ch}(\mathcal M))$.

\end{enumerate}

\end{pro}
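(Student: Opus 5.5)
The plan is to reduce all three statements to the case of smooth varieties, where they are \cite[Thm.~2.4.6]{HTT08}, by working étale locally on $\mathcal Y$ and then on $\mathcal X$. Each of the three assertions is étale local. Coherence and goodness of a filtration are defined étale locally (\Cref{D:DMcoh} and the definition of good filtrations). The characteristic stack is defined from $\operatorname{gr}^F$, whose formation commutes with étale pullback. For the same reason the non-characteristic condition \eqref{E:non-ch} can be checked étale locally, as noted after \Cref{D:nonchar}.

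First, choose an étale presentation $p:V\to\mathcal Y$ by a smooth variety. Since $f$ is schematic, $W:=\mathcal X\times_{\mathcal Y}V$ is a smooth algebraic space, étale over $\mathcal X$. Next, choose an étale cover $W'\to W$ by a smooth variety, so that $W'\to\mathcal X$ is an étale presentation. Write $g:W'\to V$ for the composite, which is a flat morphism of smooth varieties. By flat base change for the naive pullback of \Cref{D:fpb-DM}, we get $(f^*\mathcal M)_{W'}\cong g^*(\mathcal M_V)$ as filtered $D_{W'}$-modules. This holds because the étale pullbacks coincide with $\mathcal O$-module pullbacks (\Cref{R:AllPBagree}), and $D_{W'\to V}$ is compatible with the transfer bimodule under étale maps.

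Second, check that $g$ is non-characteristic for $\mathcal M_V$. At a point $w\mapsto x\in\mathcal X$, $y\in\mathcal Y$, the étale maps identify $T^\vee_w W'\cong T^\vee_x\mathcal X$ and $T^\vee_{g(w)}V\cong T^\vee_y\mathcal Y$. Under these identifications $d_{g(w)}g$ corresponds to $d_yf$, and $\operatorname{Ch}(\mathcal M_V)_{g(w)}$ corresponds to $\operatorname{Ch}(\mathcal M)_y$. Hence condition \eqref{E:non-chy} transfers. The Tor-vanishing condition in \Cref{D:nonchar} is automatic because $g$ is flat.

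Third, apply \cite[Thm.~2.4.6]{HTT08} to $g$. This gives that $g^*\mathcal M_V$ is coherent, that $g^*F_\bullet$ is good (via the identification $\operatorname{gr}^{g^*F}g^*\mathcal M_V\cong g^*\operatorname{gr}^F\mathcal M_V$, using flatness), and that $\operatorname{Ch}(g^*\mathcal M_V)=dg(p_2^{-1}\operatorname{Ch}(\mathcal M_V))$. For (a) and (b), coherence and goodness on every étale chart of $\mathcal X$ follow, since any étale $U\to\mathcal X$ can be refined by such $W'$ and both properties descend along étale covers of varieties. For (c), note that both sides are closed reduced substacks of $T^\vee\mathcal X$, and that image formation under $df$ commutes with the étale base change $T^\vee W'\to T^\vee\mathcal X$. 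So the equality pulled back to $T^\vee W'$ gives equality on the stack.

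The main obstacle I expect is the bookkeeping in (c). One must verify that the fibered product $W'\times_V T^\vee V$ is the base change of $\mathcal X\times_{\mathcal Y}T^\vee\mathcal Y$, and that $df$ pulls back to $dg$. One must also confirm that $df$ is finite, hence proper, on $p_2^{-1}\operatorname{Ch}(\mathcal M)$, so that its image is closed and commutes with flat base change. I would handle this by noting that finiteness is étale local on the target, so the image of the closed set is closed.
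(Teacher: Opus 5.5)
Your proposal follows the paper's proof. Both reduce \'etale locally on $\mathcal Y$ and $\mathcal X$ to a flat non-characteristic morphism $g$ of smooth varieties and invoke \cite[Thm.~2.4.6]{HTT08}, with flatness ensuring that $g^*F_\bullet$ is a good filtration; as the paper notes, this goodness comes from the \emph{proof} of that theorem rather than its statement.

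One correction: \cite[Thm.~2.4.6(iii)]{HTT08} only gives the inclusion $\operatorname{Ch}(g^*\mathcal M_V)\subseteq dg(p_2^{-1}\operatorname{Ch}(\mathcal M_V))$. The paper obtains equality from \cite[Rem.~2.4.8]{HTT08} or \cite[Lem.~3.5.3]{saito_RIMS_88}. In your flat setting you can also see it directly: $\operatorname{gr}^{g^*F}g^*\mathcal M_V\cong g^*\operatorname{gr}^F\mathcal M_V$ is the push-forward along $dg$ (finite on the support) of $p_2^*\widetilde{\operatorname{gr}^F\mathcal M_V}$, whose support is exactly $dg(p_2^{-1}\operatorname{Ch}(\mathcal M_V))$.
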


\begin{proof}

\ref{E:f*nonchar-coh}   follows from \ref{E:f*nonchar-filt}, which we show now.  Using that $f^*$ commutes with \'etale base change, and that quasi-coherent sheaves satisfy descent, to show that $f^*F_\bullet \mathcal M$ is a good filtration,  
one is reduced to the case of varieties.  This case follows from the proof of \cite[Thm.~2.4.6(ii)]{HTT08} under the added assumption that $f$ is flat.  Alternatively, 
$f$ being flat implies that \cite[(3.5.1.1)]{saito_RIMS_88} holds, so the result also follows from \cite[Lem.~3.5.3]{saito_RIMS_88}.

\ref{E:f*nonchar-Ch}  This can be checked \'etale locally, and so one is again reduced to the case of varieties.
 It follows from  \cite[Thm.~2.4.6(iii)]{HTT08} that for a morphism of varieties,  if $f$ is non-characteristic, then $\operatorname{Ch}(H^0(Lf^*\mathcal M))\subseteq df(p_2^{-1}\operatorname{Ch}(\mathcal M))$, and in fact equality holds (see \cite[Rem.~2.4.8]{HTT08} and the references).   We saw from \Cref{R:Lf*=f*} that, with the added hypothesis that $f$ is flat, then $Lf^*=f^*$, giving the result.  Alternatively,   $f$ being flat implies in the case of varieties that \cite[(3.5.1.1)]{saito_RIMS_88} holds, so the result also follows from \cite[Lem.~3.5.3]{saito_RIMS_88}.  
\end{proof}

\subsubsection{Pull back   of Hodge modules via schematic finite flat non-characteristic morphisms}

We now turn to the pull back  of Hodge modules:

\begin{lem} \label{L:ffNCpbH}  
Let $f:\mathcal X\to \mathcal Y$ be a finite flat schematic morphism of  smooth separated DM stacks locally of finite type over $\mathbb C$, and let $\mathsf M$ be a pure polarizable Hodge module on $\mathcal Y$ with underlying filtered $D$-module $(\mathcal M,F_\bullet)$.  Assume that $f$ is non-characteristic with respect to $\mathcal M$, and that $\mathcal M$ has strict support $\mathcal Y$.
There is a unique pure polarizable Hodge module $\mathsf M'$ on $\mathcal X$ with underlying filtered $D$-module given by the pull back   $(f^*\mathcal M,f^*F_\bullet)$ from \Cref{P:f*nonchar} with the property that if $\mathcal U\subseteq \mathcal X$ is any open substack such that $f|_{\mathcal U}:\mathcal U\to \mathcal Y$ is \'etale, then $\mathsf M'|_{\mathcal U}=\mathsf M|_{\mathcal U}$ ($=f|_{\mathcal U}^*\mathsf M$).
 Moreover,   $\mathsf M'$ has strict support $\mathcal X$. 
\end{lem}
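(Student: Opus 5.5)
The plan is to reduce to the case of varieties using an étale presentation, and to build the Hodge module there by minimal extension from the étale locus of $f$. First I will define the candidate. Let $\mathcal U\subseteq \mathcal X$ be the maximal open substack over which $f$ is étale. Since $f$ is finite flat between smooth stacks of the same dimension, $\mathcal U$ is dense; this needs a short argument, for instance that the ramification locus is the zero locus of the Jacobian section of $\det(f^*\Omega^1_{\mathcal Y})^\vee\otimes \det \Omega^1_{\mathcal X}$. On $\mathcal U$ I set $\mathsf N:=f|_{\mathcal U}^*\mathsf M$. This is a pure polarizable Hodge module, because étale pullback of Hodge modules is defined through the étale-sheaf description in \Cref{D:HMXX}. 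It has strict support $\mathcal U$: étale locally it is an étale pullback of a module with strict support, and strict support is étale local. I then take $\mathsf M'$ to be the minimal extension of $\mathsf N$ to $\mathcal X$ given by \Cref{L:stacky-minimal-extension}. That lemma gives existence and uniqueness up to unique isomorphism, polarizability, the required restriction to $\mathcal U$, and strict support $\mathcal X$ by part \ref{L:StMinExtc}. If $\mathcal U'$ is any other open substack on which $f$ is étale, then $\mathcal U'\subseteq\mathcal U$, so the restriction property holds there as well.

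The main step is to identify the filtered $D$-module of $\mathsf M'$ with $(f^*\mathcal M,f^*F_\bullet)$. I will use \Cref{L:ExtFilt}. Choose an effective divisor $\mathcal D\supseteq \mathcal X-\mathcal U$; the ramification divisor works, and if the ramification locus is empty the claim is trivial. It then suffices to show that $(f^*\mathcal M,f^*F_\bullet)$ has the following properties:
\begin{enumerate}
\item it is a filtered regular holonomic $D$-module;
\item it has strict support $\mathcal X$;
\item it admits a $\mathbb Q$-structure étale locally;
\item it is quasi-unipotent and regular along $\mathcal D$;
\item its restriction to $\mathcal U$ is $(\mathcal N,F_\bullet)$.
\end{enumerate}
By \Cref{P:f*nonchar}, $f^*F_\bullet$ is a good filtration. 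Holonomicity follows from $\operatorname{Ch}(f^*\mathcal M)=df(p_2^{-1}\operatorname{Ch}(\mathcal M))$ together with finiteness of $df$ on $p_2^{-1}\operatorname{Ch}(\mathcal M)$, which bounds the dimension by $\dim \mathcal X$. Restriction to $\mathcal U$ is compatible by construction. All of these conditions are étale local, so after pulling back along presentations of $\mathcal Y$ and $\mathcal X$ (using that $f$ is schematic) I reduce to a finite flat non-characteristic morphism of smooth varieties.

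In the variety case I will invoke Saito's theory. For a non-characteristic morphism, \cite[Lem.~3.5.3]{saito_RIMS_88} (with the Tor-vanishing automatic by flatness) together with Saito's result that non-characteristic pullback preserves Hodge modules shows that $H^0f^!\mathsf M$ (up to shift) is a pure polarizable Hodge module. Its underlying filtered $D$-module is the naive pullback $(f^*M,f^*F_\bullet)$, and its $\mathbb Q$-structure is $f^{-1}K$ (shifted). This supplies properties (1), (3) and (4) at once. For property (2), the strict support: since $f$ is flat and $M$ has no sections supported on proper subvarieties, I need $f^*M$ to have no sub- or quotient supported on $f^{-1}$ of a proper closed set. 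Submodules are handled by flatness and by the fact that $M$ is torsion-free on its $\mathcal O$-coherent locus. Alternatively, since $f^*M$ underlies a polarizable Hodge module, it is semisimple, so it splits by strict support. Any summand supported in the ramification divisor $\mathcal D$ would contradict the fact that $f^*M=f^{-1}M\otimes\mathcal O$ has no $\mathcal O$-torsion sections, because $M$ has none and $f$ is flat.

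I expect the hardest step to be this variety-level input: checking that the non-characteristic hypothesis in the sense of \Cref{D:nonchar} matches Saito's hypothesis, so that the naive pullback underlies a Hodge module with $\mathbb Q$-structure. Once that is in place, \Cref{L:ExtFilt} identifies the filtered module of $\mathsf M'$ with $(f^*\mathcal M,f^*F_\bullet)$. Uniqueness of $\mathsf M'$ then follows from the uniqueness in \Cref{L:stacky-minimal-extension}, since any $\mathsf M'$ with the stated properties has strict support $\mathcal X$ and so is the minimal extension of its restriction to $\mathcal U$.
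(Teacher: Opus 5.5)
Your proposal is correct and follows essentially the same route as the paper. You define $\mathsf M'$ as the minimal extension (\Cref{L:stacky-minimal-extension}) of the \'etale restriction of $\mathsf M$, then identify its filtered $D$-module with $(f^*\mathcal M,f^*F_\bullet)$ via \Cref{L:ExtFilt}, checking the $\mathbb Q$-structure, quasi-unipotence and regularity, and strict support after base change to an \'etale presentation, where Saito's non-characteristic pullback theorem supplies the input. Your choice of the maximal \'etale locus, your density argument and your explicit uniqueness argument are slightly more careful than the paper's write-up, and your torsion-freeness plus semisimplicity argument for strict support is a valid substitute for the paper's citation.
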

 
\begin{proof}
It follows from \Cref{P:f*nonchar} that $(f^*\mathcal M,f^*F_\bullet)$ is a holonomic $D$-module on $\mathcal X$. As regularity can be checked \'etale locally (see e.g., \cite[Def.~5.2.10 and Lem.~5.1.23]{HTT08}), to show that $(f^*\mathcal M,f^*F_\bullet)$ is regular, we can reduce  to the case of varieties.  This then follows from, e.g., \cite[Thm.~6.1.5]{HTT08}, where we are using \Cref{R:Lf*=f*} to identify $f^\dagger := Lf^*(-)[\dim \mathcal X-\dim \mathcal Y]= f^*$.    We also have that $(f^*\mathcal M,f^*F_\bullet)$ has strict support $\mathcal X$, since, after pulling back via an \'etale cover  of $ \mathcal Y$, one is reduced to the case of varieties; in the case of varieties, $f^*\mathcal M$ is a pure Hodge module with strict support, as $f$ is non-characteristic (see \cite[Thm. 9.3]{Schnell-SaitoVanishing}).

We now turn to constructing the Hodge module $\mathsf M'$. 
 Let $\mathcal U_{\mathcal Y}\subseteq \mathcal Y$ be an open substack over which $f$ is \'etale, and let $\mathcal U=f^{-1}(\mathcal U_{\mathcal Y})$ be the pre-image in $\X$.  From the definition of a Hodge module as an \'etale sheaf, and the fact that $f|_{\mathcal U}$ is \'etale, we have a well-defined pull back  $\mathsf M|_{\mathcal U}$ with underlying filtered $D$-module $(f^*\mathcal M,f^*F_\bullet)|_{\mathcal U}$.  Cleary, $\mathsf M|_{\mathcal U}$ has strict support $\mathcal U$.

Now define $\mathsf M'$ on ${\mathcal X}$ to be the minimal extension  (\Cref{L:stacky-minimal-extension}) of the Hodge module $\mathsf M|_{\mathcal U}$ on $\mathcal U$ to ${\mathcal X}$. Let $(\mathcal M',F'_\bullet)$ be the underlying regular holonomic filtered $D_{\mathcal X}$-module.  
We need to check that 
 $(\mathcal M',F'_\bullet)\cong (f^*\mathcal M,f^*F_\bullet)$.
 By construction, we have $(\mathcal M',F'_\bullet)|_{\mathcal U}\cong (f^*\mathcal M,f^*F_\bullet)|_{\mathcal U}$.  Let $\mathcal D\subseteq \mathcal X$ be a divisor containing the complement $\mathcal X-\mathcal U$.
 As $\mathsf M'$ is a Hodge module, we have that 
 $(\mathcal M',F_\bullet)$ is a filtered regular holonomic $D$-module on $\mathcal X$ with strict support $\mathcal X$, which admits a $\mathbb Q$-structure \'etale locally, and is quasi-unipotent and regular along $\mathcal D$.  
 We also have that $(f^*\mathcal M,f^*F_\bullet)$ is a filtered regular holonomic $D$-module on $\mathcal X$ with strict support $\mathcal X$. 
 We claim that $(f^*\mathcal M,f^*F_\bullet)$  admits a $\mathbb Q$-structure \'etale locally,  and is quasi-unipotent and regular along $\mathcal D$. To see this, consider the fibered product diagram
$$
\xymatrix{
U'\ar[r]^{p'} \ar[d]_{f'}&\mathcal X \ar[d]_f\\
U \ar[r]^p& \mathcal Y
}
$$
where $p$ is an \'etale presentation of $\mathcal Y$.  By the commutativity of pull back of $\mathcal O_{\mathcal Y}$-modules, we have that $(p'^*f^*\mathcal M,p'^*f^*F_\bullet)=(f'^*p^*\mathcal M,f'^*p^*F_\bullet)$, and the latter is the filtered $D$-module associated to the Hodge module $f'^*(\mathsf M|_U)$ on $U'$.
  The result then follows from \Cref{L:ExtFilt}.
\end{proof}

\begin{dfn}[Finite flat non-characteristic pull back  ]\label{D:ffncpbH}
Let $f:\mathcal X\to \mathcal Y$ be a finite flat schematic morphism of  smooth separated DM stacks locally of finite type over $\mathbb C$, and let $\mathsf M$ be a pure polarizable Hodge module on $\mathcal Y$ with underlying filtered $D$-module $(\mathcal M,F_\bullet)$.  Assume that $f$ is non-characteristic with respect to $\mathcal M$.  We define the \emph{pull back  } $f^*\mathsf M$ of $\mathsf M$ to $\mathcal X$ as the Hodge module $\mathsf M'$ defined in \Cref{L:ffNCpbH}
\end{dfn}

\section{Non-characteristic covers for DM stacks}

In this section, we establish that given a Hodge module on a smooth DM stack with projective coarse moduli space, there exists a finite flat non-characteristic cover, with respect to the Hodge module,  from a smooth projective variety (\Cref{T:nccover}).

\subsection{Some stratifications related to the non-characteristic condition}
We start by considering some stratifications related to the non-characteristic condition.  
Given a morphism $f:\mathcal X\to \mathcal Y$ of smooth separated DM stacks of finite type over $\mathbb C$, 
and  a coherent $D$-module $\mathcal M$ on $\mathcal Y$, then we define a stratification 
$$
\cdots \subseteq \mathcal Y_{\dim \mathcal Y}(\mathcal M)\subseteq \cdots \subseteq \mathcal Y_1 (\mathcal M)\subseteq \mathcal Y_{0}(\mathcal M)=\mathcal Y
$$ 
by the following condition on the $\mathbb C$-points (i.e., $\operatorname{Spec}\mathbb C$-points):
$$
\mathcal Y_i(\mathcal M)(\mathbb C)=\{ y: \dim \operatorname{Ch}(\mathcal M)_y\ge i\}.
$$
The $\mathcal Y_i(\mathcal M)(\mathbb C)$ are closed subsets as $\operatorname{Ch}(\mathcal M)$ is a cone in $T^\vee\mathcal Y$, and so we have that $\mathbb P\operatorname{Ch}(\mathcal M)\subseteq \mathbb PT^\vee\mathcal Y$ is a closed substack, and therefore proper and schematic  over $\mathcal Y$; for proper morphisms, the dimension of the fibers is upper semi-continuous \cite[\href{https://stacks.math.columbia.edu/tag/0D4I}{Lem.~0D4I}]{stacks-project}.  We use the associated induced stack structure to define $\mathcal Y_i(\mathcal M)$ \cite[\href{https://stacks.math.columbia.edu/tag/0509}{Lem.~0509}]{stacks-project}.

\begin{lem}\label{L:Y(M)strat}
If $\mathcal M$ is holonomic, then for all $i$ we have $\dim \mathcal Y_i(\mathcal M) \le \dim \mathcal Y - i$.

\end{lem}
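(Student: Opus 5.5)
The idea is to bound $\dim \mathcal Y_i(\mathcal M)$ by a fiber-dimension count applied to the restriction of $\operatorname{Ch}(\mathcal M)$ over $\mathcal Y_i(\mathcal M)$. First I will reduce to the case of varieties. Choose an \'etale presentation $p:U\to \mathcal Y$ by a smooth scheme. Three facts should make this reduction routine, since all the objects involved are \'etale local:
\begin{itemize}
\item the characteristic stack is compatible with \'etale pull back, so $\operatorname{Ch}(p^*\mathcal M)=\operatorname{Ch}(\mathcal M)\times_{\mathcal Y}U$ inside $T^\vee U=T^\vee\mathcal Y\times_{\mathcal Y}U$;
\item consequently $p^{-1}(\mathcal Y_i(\mathcal M))=U_i(p^*\mathcal M)$ set-theoretically, because the fibers of the cones over a point of $U$ and over its image agree;
\item \'etale morphisms preserve dimension, and $p^*\mathcal M$ is holonomic by \Cref{D:RHDM-stack}.
\end{itemize}
So it suffices to prove the following: for $Y$ a smooth variety and $M$ a holonomic $D_Y$-module, one has $\dim Y_i(M)\le \dim Y-i$.

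In the variety case, fix $i$ and set $Z:=Y_i(M)$, a closed subset. Consider the restriction of the projection $\pi:\operatorname{Ch}(M)\to Y$ to $\pi^{-1}(Z)$. Let $Z'$ be an irreducible component of $Z$. Every fiber of $\pi^{-1}(Z')\to Z'$ has dimension at least $i$, since every point of $Z'$ lies in $Y_i(M)$. The map is surjective onto $Z'$, because each fiber is a cone and so contains the zero vector (the zero section lies in $\operatorname{Ch}(M)$ over the support of $M$). Hence some irreducible component $W$ of $\pi^{-1}(Z')$ dominates $Z'$. By upper semicontinuity of fiber dimension, the generic fiber of $W\to Z'$ has dimension $\ge i$; this requires arguing with the irreducible components of the fibers. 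The standard dimension inequality for dominant morphisms of varieties then gives $\dim W\ge \dim Z'+i$.

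Finally, holonomicity gives $\dim \operatorname{Ch}(M)\le \dim Y$, as noted after the definition of the characteristic stack. Therefore $\dim Z'+i\le \dim W\le \dim Y$, and so $\dim Z'\le \dim Y-i$ for every component $Z'$, which is the claim.

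The step I expect to need the most care is the fiber-dimension argument. The difficulty is that the fibers of $\pi^{-1}(Z')\to Z'$ may be reducible, and one needs a single component $W$ that both dominates $Z'$ and has generic fiber dimension $\ge i$. I would handle this by the following route:
\begin{enumerate}
\item Pass to the projectivization $\mathbb P\operatorname{Ch}(M)\to Y$, which is proper, so that its image over $Z'$ is closed.
\item Note that the fiber of $\mathbb P\operatorname{Ch}(M)$ over each point of $Z'$ has dimension $\ge i-1$ (for $i\ge 1$).
\item Use that $\pi^{-1}(Z')$ has finitely many components. Since $Z'$ is irreducible, some component $W$ must have image all of $Z'$ with fibers of dimension $\ge i-1$ over a dense set of points.
\item Apply Chevalley's semicontinuity theorem to $W$ to get $\dim W\ge \dim Z'+i-1$.
\item Pass back to the affine cone over $W$, which adds one to the dimension and gives $\dim Z'+i\le \dim \operatorname{Ch}(M)\le \dim Y$.
\end{enumerate}
The case $i=0$ is trivial.
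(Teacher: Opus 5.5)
Your proposal is correct and follows the paper's argument: a component of $\mathcal Y_i(\mathcal M)$ of dimension greater than $\dim\mathcal Y-i$ would force a piece of $\operatorname{Ch}(\mathcal M)$ of dimension greater than $\dim\mathcal Y$, which contradicts holonomicity. You also spell out what the paper's short proof leaves implicit, namely the \'etale reduction and the fiber-dimension bookkeeping via the projectivized cone and semicontinuity, and you rightly observe that only the bound $\dim\operatorname{Ch}(\mathcal M)\le\dim\mathcal Y$ is used, not equidimensionality.
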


\begin{proof}
The lemma is a consequence of the fact that if $\mathcal M$ is holonomic, then every irreducible component of  $\operatorname{Ch}(\mathcal M)$ has dimension equal to the dimension of $\mathcal Y$. 
Indeed, consider $\mathcal Y_i(\mathcal M)$.   If $\mathcal Y_i(\mathcal M)$ is empty, there is nothing to prove.  If $\mathcal Y_i(\mathcal M)$ is non-empty, suppose that there is an irreducible component with dimension greater than $\dim \mathcal Y-i$.  Then, from the definition of $\mathcal Y_i(\mathcal M)$, there would be a component of $\operatorname{Ch}(\mathcal M)$ of dimension greater than $\dim \mathcal Y$, a contradiction.  Therefore, $\dim \mathcal Y_i(\mathcal M) \le \dim \mathcal Y- i$.  
\end{proof}

We will actually want to modify this stratification to interact with the non-characteristic condition. We define a stratification 
$$
\cdots \subseteq \mathcal Y_{\dim \mathcal Y}(\mathcal M,f)\subseteq \cdots \subseteq \mathcal Y_1 (\mathcal M,f)\subseteq \mathcal Y_{0}(\mathcal M,f)=\mathcal Y
$$ 
by the condition on the $\mathbb C$-points:
$$
\mathcal Y_i(\mathcal M,f)(\mathbb C)=\{ y: \dim \operatorname{Ch}(\mathcal M)_y+\dim \ker (d_yf)\ge i\}.
$$
As before, these define   closed substacks $\mathcal Y_i(\mathcal M,f)$ since $\dim \ker (d_yf)$ is upper semi-continuous, as is $\dim \operatorname{Ch}(\mathcal M)_y$.  
Note that we have the obvious inclusion $$\mathcal Y_i(\mathcal M)\subseteq \mathcal Y_i(\mathcal M,f).$$

\begin{rem} 
To help motivate the definition of this stratification, 
heuristically, the complement $\mathcal Y_i(\mathcal M,f)-\mathcal Y_{i+1}(\mathcal M,f)$, i.e., the locus where $\dim \operatorname{Ch}(\mathcal M)_y+\dim \ker (d_yf)=i$,    is the locus of points $y$ where $\ker (d_yf)$ could be  $(\dim \mathcal Y-i)$ dimensions larger before $f$ would automatically fail the non-characteristic condition at $x$ (with $y=f(x)$) for dimension reasons.  For instance, $\mathcal Y_{\dim \mathcal Y}(\mathcal M,f)$ is the locus of points where, if $\ker df$ were any larger, then $f$ would fail the non-characteristic condition for dimension reasons.  Along these lines, note that if  $\mathcal Y_{\dim \mathcal Y+1}(\mathcal M,f)\ne \emptyset$,  then $f$ cannot be non-characteristic.
\end{rem}

\begin{rem}\label{R:YMfsm}
We note here that if $f$ is smooth, then for all $i$ we have $\mathcal Y_i(\mathcal M)=\mathcal Y_i(\mathcal M,f)$, as the codifferential $d_yf$ is injective.
\end{rem}

\begin{lem}\label{L:nonChYMf}
Suppose that $f$ is non-characteristic with respect to $\mathcal M$.  Then for all $y$ in $\mathcal Y$ we have
$$
\dim \mathcal Y\ge \dim \operatorname{Ch}(\mathcal M)_y+\dim \ker (d_yf).
$$
Moreover, for $y$ in $\mathcal Y$, if $i$ is the largest non-negative integer such that $y$ is in $\mathcal Y_i(\mathcal M,f)$, then 
\begin{equation}\label{E:L:nonChYMf}
\operatorname{Ch}(\mathcal M)_y+\dim \ker (d_yf)= i.
\end{equation}
\end{lem}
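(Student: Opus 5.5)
The plan is to reduce both assertions to linear algebra in the single cotangent space $T^\vee_y\mathcal Y$, using the pointwise form \eqref{E:non-chy} of the non-characteristic condition. Fix a $\mathbb C$-point $y$ of $\mathcal Y$. The hypothesis is only informative at points in the image of $f$, so I first take $y=f(x)$ for some $\mathbb C$-point $x$ of $\mathcal X$; points outside the image are discussed at the end. At such a point, \eqref{E:non-chy} gives
$$\operatorname{Ch}(\mathcal M)_y\cap \ker(d_yf)=\{0\}\subseteq T^\vee_y\mathcal Y.$$

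The key step is the projective dimension theorem. The fiber $\operatorname{Ch}(\mathcal M)_y$ is a closed conic subset of the vector space $T^\vee_y\mathcal Y$, which has dimension $\dim\mathcal Y$, and $\ker(d_yf)$ is a linear subspace. If $\operatorname{Ch}(\mathcal M)_y$ is empty, or equal to $\{0\}$, there is nothing to prove beyond the trivial bound $\dim\ker(d_yf)\le\dim\mathcal Y$, with the convention $\dim\emptyset=-\infty$ or $0$ as appropriate. Otherwise I pass to projectivizations in $\mathbb P(T^\vee_y\mathcal Y)\cong\mathbb P^{\dim\mathcal Y-1}$. There $\mathbb P\operatorname{Ch}(\mathcal M)_y$ has dimension $\dim\operatorname{Ch}(\mathcal M)_y-1$ and $\mathbb P\ker(d_yf)$ has dimension $\dim\ker(d_yf)-1$. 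Two closed subvarieties of projective space whose dimensions sum to at least the ambient dimension must meet. Since the cones meet only at the origin, the projectivizations are disjoint, and therefore
$$(\dim\operatorname{Ch}(\mathcal M)_y-1)+(\dim\ker(d_yf)-1)<\dim\mathcal Y-1.$$
This gives $\dim\operatorname{Ch}(\mathcal M)_y+\dim\ker(d_yf)\le\dim\mathcal Y$, which is the first assertion. The argument needs care with degenerate cases, for instance when $\ker(d_yf)=0$ and the sum is simply $\dim\operatorname{Ch}(\mathcal M)_y\le\dim\mathcal Y$.

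The second assertion is a tautology of the definition of the stratification. Set $s(y):=\dim\operatorname{Ch}(\mathcal M)_y+\dim\ker(d_yf)$. By definition, $y\in\mathcal Y_i(\mathcal M,f)$ exactly when $s(y)\ge i$. So the largest $i$ with $y\in\mathcal Y_i(\mathcal M,f)$ is $s(y)$ itself, which gives \eqref{E:L:nonChYMf}. The first part guarantees that this maximum exists in the range $0\le i\le\dim\mathcal Y$, because $s(y)$ is finite and bounded by $\dim\mathcal Y$. This is the only role the non-characteristic hypothesis plays in the second part.

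The main obstacle I expect is interpretive rather than mathematical: making sense of $\ker(d_yf)$ at points $y$ that are not of the form $f(x)$, and, when several $x$ map to $y$, checking that the relevant quantity is well defined. In the intended application $f$ is finite flat and surjective, so every $y$ lies in the image, and I would state that assumption explicitly. If several preimages have different kernels, I would read $\dim\ker(d_yf)$ as the maximum over preimages, matching the upper semicontinuity used to define the strata. The inequality then holds because it holds at each preimage.
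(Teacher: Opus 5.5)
Your proposal is correct and follows the paper's own argument. The paper also gets the inequality by projectivizing $T^\vee_y\mathcal Y$ and using that the cone $\operatorname{Ch}(\mathcal M)_y$ meets $\ker(d_yf)$ only at $0$, and it also treats the second statement as a tautology from the definition of the strata $\mathcal Y_i(\mathcal M,f)$; your extra care with degenerate cases and with points having several preimages only spells out what the paper leaves implicit.
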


\begin{proof}
The first statement is clear from \eqref{E:non-chy} by say projectivizing
 the $(\dim \mathcal Y)$-dimensional vector space $T_y\mathcal Y$ 
 and using that $\operatorname{Ch}(\mathcal M)_y$ is a cone.  The second statement is obvious.
\end{proof}

While we do not have as much control over the dimension of the strata $ \mathcal Y_i(\mathcal M,f)$ as we do for the strata $\mathcal Y_i(\mathcal M)$, we will see that the dimension of the  $ \mathcal Y_i(\mathcal M,f)$ behave  well with respect to hyperplane sections.

\subsection{A Bertini theorem for non-characteristic morphisms}
Our goal is to prove the following Bertini theorem for non-characteristic morphisms:

\begin{pro}\label{P:bertini}
Let $f:\mathcal X\to \mathcal Y$ be a schematic projective morphism of separated DM stacks over $\mathbb C$ with 
quasi-projective coarse moduli spaces $X$ and $Y$, respectively, and assume that $\mathcal Y$ is smooth. 
Let $\mathcal M$ be a holonomic $D$-module on $\mathcal Y$.
Assume further:
\begin{enumerate}[label=(\alph*)]
\item \label{E:bertini-fib} The fiber dimension of $f$ is constant.
\item \label{E:bertini-Q} There exists a dense open smooth subscheme $Q\subseteq \mathcal X$ such that the map $f|_Q: Q\to \mathcal Y$ is flat and surjective.
\item \label{E:bertini-comp} 
The complement $\mathcal X-Q$ has dimension less than the fiber dimension of $f$.
\item  \label{E:bertini-strat} For all $i$ we have $\dim \mathcal Y_i(\mathcal M,f|_Q)\le \dim \mathcal Y-i$.

\item \label{E:bertini-nc} The map $f|_Q$ is non-characteristic with respect to $\mathcal M$.

\end{enumerate}

If the fiber dimension of $f$ is positive, then for any ample line bundle $L$ on $X$ and any embedding $X\subseteq \mathbb P^N$ associated to a sufficiently large tensor power of $L$, a general hyperplane section $X_H=X\cap H$ of $X$ will have the property that setting $\mathcal X_H:=X_H\times_X \mathcal X$, then $X_H$ is the coarse moduli space for $\mathcal X_H$,  and the composition  $f_H:\mathcal X_H\to \mathcal X\to \mathcal Y$ of $f$ with the second projection is a schematic projective morphism satisfying conditions \ref{E:bertini-fib}--\ref{E:bertini-nc} with $Q$ replaced by $Q_H:= X_H\times_X Q= Q\cap H$. 
\end{pro}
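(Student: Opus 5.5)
The plan is to adapt the incidence-variety argument of \cite[Thm.~2.1]{KV04} and to add two extra parameter counts, one for the non-characteristic condition and one for the stratification. Let $d>0$ be the fiber dimension of $f$. Fix an embedding $X\subseteq\mathbb P^N$ given by a large power of $L$, chosen so that the following two separation properties hold.
\begin{itemize}
\item[(i)] For every point $x\in X$ in the relevant loci, the hyperplanes through $x$ surject linearly onto $\mathbb P(T^\vee_x)$.
\item[(ii)] Containing a given positive-dimensional subvariety imposes more than $\dim\mathcal Y+1$ conditions on hyperplanes.
\end{itemize}
Since $Q$ is a scheme, it lies in the locus of trivial stabilizers, so $Q\to X$ is an open immersion. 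Hence (i) applies to $T^\vee_xQ$ for $x\in Q$.

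\textbf{Formal points.} In characteristic $0$ the stack $\mathcal X$ is tame, so formation of the coarse moduli space commutes with arbitrary base change. Thus $X_H$ is the coarse space of $\mathcal X_H$. The morphism $f_H$ is the composition of the closed immersion $\mathcal X_H\hookrightarrow\mathcal X$, which is a base change of $X_H\hookrightarrow X$, with $f$. It is therefore schematic and projective.

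\textbf{Conditions \ref{E:bertini-fib}--\ref{E:bertini-comp}.} Each fiber $\mathcal X_y$ maps quasi-finitely to $X$, so its image is a closed subvariety of pure dimension $d$. These images, and their irreducible components, form a family of dimension at most $\dim\mathcal Y$. By (ii) and an incidence count, a general $H$ contains no component of any fiber. Since $d>0$ and the fibers are projective, $H$ meets every component. Hence every fiber of $f_H$ has pure dimension $d-1$, which gives \ref{E:bertini-fib}.

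A general $H$ also cuts $X-Q$ properly, so $\dim(X_H-Q_H)<d-1$, which gives \ref{E:bertini-comp}. By \ref{E:bertini-comp} for $f$, the set $Q$ meets every component of every fiber. Since $\dim(\mathcal X_y\cap H)=d-1>\dim((X-Q)\cap H)$, the set $Q_H$ meets every fiber, so $f|_{Q_H}$ is surjective.

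Smoothness of $Q_H$ follows from the classical Bertini theorem on the quasi-projective scheme $Q$. For flatness, a local equation $h$ of $H$ restricts to a nonzerodivisor on each fiber of the flat morphism $f|_Q$, because $H$ contains no fiber component. The local criterion of flatness then shows that $Q_H\to\mathcal Y$ is flat, completing \ref{E:bertini-Q}.

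\textbf{Conditions \ref{E:bertini-strat} and \ref{E:bertini-nc}.} Let $x\in Q_H$ lie over $y$, and write $k=\dim\ker(d_yf|_Q)$ and $c=\dim\operatorname{Ch}(\mathcal M)_y$. The codifferential $d_yf_H$ is $d_yf|_Q$ followed by the quotient of $T^\vee_xQ$ by the line spanned by $dh_x$. Hence $\ker d_yf_H\supseteq\ker d_yf|_Q$, and the dimension grows by at most one.

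\emph{Non-characteristic condition \ref{E:bertini-nc}.} Because $f|_Q$ is non-characteristic, $d_yf$ is injective on the cone $\operatorname{Ch}(\mathcal M)_y$. So $f_H$ fails to be non-characteristic at $x$ exactly when $[dh_x]$ lies in the $(c-1)$-dimensional projective cone $\mathbb P(d_yf(\operatorname{Ch}(\mathcal M)_y))$. Stratify by $\mathcal Y_i(M,f|_Q)-\mathcal Y_{i+1}(M,f|_Q)$. On such a stratum $c+k=i$, and by \ref{E:bertini-strat} the preimage in $Q$ has dimension at most $\dim\mathcal Y-i+d$. By (i), the condition on $H$ through $x$ has codimension $\dim Q-c=\dim\mathcal Y+d-c$. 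The bad incidence pairs $(x,H)$ therefore have dimension at most
\[
(\dim\mathcal Y-i+d)+(N-1)-(\dim\mathcal Y+d-c)=N-1-k<N,
\]
so a general $H$ avoids them.

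\emph{Stratification condition \ref{E:bertini-strat}.} A point of $\mathcal Y_i(M,f_H)$ either already lies in $\mathcal Y_i(M,f|_Q)$, which is fine, or lies in $\mathcal Y_{i-1}(M,f|_Q)$ and has a jump point $x$. A jump point is one where $dh_x\in\operatorname{im}(d_yf)$, a subspace of dimension $\dim\mathcal Y-k$. This is a condition of codimension $d+k$ on hyperplanes through $x$. The bad pairs over the stratum $\mathcal Y_{i-1}-\mathcal Y_i$ therefore have dimension at most
\[
(\dim\mathcal Y-i+1+d)+(N-1)-(d+k)=N+\dim\mathcal Y-i-k.
\]
For general $H$ the fiber of this incidence variety has dimension at most $\dim\mathcal Y-i$, and hence so does its image in $\mathcal Y$.

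\textbf{Main obstacle.} The hardest step is making these incidence counts rigorous and uniform. This requires that the strata and cones $\operatorname{Ch}(\mathcal M)_y$ be constructible families, and that a single large power of $L$ give the linear surjectivity (i) and the separation (ii) simultaneously over all of $Q$. The stack structure is harmless here, because all the counts take place on the scheme $Q$ and on $X$, after an étale presentation of $\mathcal Y$.
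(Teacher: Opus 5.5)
Your proposal is correct. For condition (e) it is exactly the paper's argument. The bad hyperplanes through $x$ form the preimage of the cone $d_yf(\operatorname{Ch}(\mathcal M)_y)$, of codimension $\dim Q-c$ (the paper's $B_x\subseteq R_x$). The incidence variety over the stratum $c+k=i$ then has dimension at most $N-1-k<N$, which is the paper's bound on $\dim B_i$. Only $\dim d_yf(\operatorname{Ch}(\mathcal M)_y)\le c$ is needed there, and ``injective on the cone'' should read $\ker d_yf\cap\operatorname{Ch}(\mathcal M)_y=0$.

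The routes differ in two places.

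For (a)--(c), the paper simply cites \cite[Lem.~3.1]{KV04}, while you reprove it. That is fine, and your condition (i) holds automatically for any projective embedding.

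For (d), the paper's \Cref{L:Y(Mf)strat} argues pointwise: a general hyperplane through a fixed $x$ causes no jump of $\ker d_yf$. It then asserts that no jump occurs at a general point of any component $\mathcal Z_i$ of $\mathcal Y_i(\mathcal M,f_{X_\ell})$, and concludes $\mathcal Z_i\subseteq \mathcal Y_i(\mathcal M,f)$. You instead run the same incidence count on the jump locus (the paper's $R_x$, of codimension $d+k$) over the stratum $c+k=i-1$. This shows that the new points of $\mathcal Y_i(\mathcal M,f_H)$ form a set of dimension at most $\dim\mathcal Y-i-k$.

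Your version is the more robust one, because jumps can fill whole components. Take $\mathcal M=\mathcal O_{\mathcal Y}$ and the projection $f\colon\mathbb P^1\times\mathbb P^1\to\mathbb P^1$. Then $\mathcal Y_1(\mathcal M,f)=\emptyset$, while $\mathcal Y_1(\mathcal M,f_H)$ is the nonempty branch locus of $X_H\to\mathbb P^1$. So the inclusion $\mathcal Z_i\subseteq\mathcal Y_i(\mathcal M,f)$ fails, even though the dimension bound it is meant to give holds, and your count proves that bound directly.

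The only cost is the bookkeeping you already flag. Since $\ker d_yf$ depends on $x$ rather than $y$, the strata should be taken on $Q$ by the values of $(c,k)$. Their images in $\mathcal Y$ are then bounded using (d) and the fiber dimension $d$.
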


The fact that, in the situation of \Cref{P:bertini}, a general hyperplane section of $X$ satisfies the conditions \ref{E:bertini-fib}--\ref{E:bertini-comp} is the content of \cite[Lem.~3.1]{KV04} as it is applied in the proof of \cite[Thm.~2.1]{KV04}.

Consequently, we can focus our attention on \ref{E:bertini-strat} and \ref{E:bertini-nc}.   Note that we are most interested in  \ref{E:bertini-nc}, but  \ref{E:bertini-strat} is helpful for making dimension estimates, and holds automatically if $f$ is smooth, which will eventually be the starting point of our inductive argument.

To this end, let us start by considering  how hyperplane sections interact with the definitions above.  Let $f:\mathcal X\to \mathcal Y$ be as in \Cref{P:bertini}, and assume that we have, as in the proposition, chosen an embedding of the coarse moduli space $X\subseteq \mathbb P^N$.  
Fix a $\mathbb C$-point $x$ of $\mathcal X$ lying in $Q$, and set $y=f(x)$. Abusing notation, we will use the same letter $x$ for the corresponding point of the coarse moduli space $X$, and we note that by assumption we also have $Q\subseteq X$.  We consider the morphism 
$$
H^0(\mathbb P^N,\mathcal O_{\mathbb P^N}(1))\to \mathcal O_{X,x} \to \mathcal O_{X,x}/\mathfrak m_x 
$$
obtained by dividing by any linear form that does not vanish at $x$, and we define
$$
H^0(\mathbb P^N,\mathcal O_{\mathbb P^N}(1))_x:=\ker \left(H^0(\mathbb P^N,\mathcal O_{\mathbb P^N}(1))\to \mathcal O_{X,x}/\mathfrak m_x \right)
$$
to be the sections vanishing at $x$, and note that this is independent of the choice of non-vanishing linear form. 
As the map $H^0(\mathbb P^N,\mathcal O_{\mathbb P^n}(1))\to \mathcal O_{X,x}/\mathfrak m_x$ is surjective (the line bundle $\mathcal O_{\mathbb P^N}(1)$ is very ample when restricted to $X$), we have 
$
\dim H^0(\mathbb P^N,\mathcal O_{\mathbb P^N}(1))_x=N$.

For a non-zero linear form $\ell\in H^0(\mathbb P^N,\mathcal O_{\mathbb P^n}(1))$ we define $X_\ell=X\cap \{\ell =0\}$ to be the associated hyperplane section,  and we use the notation $\mathcal X_{\ell}:=X_{\ell}\times_X\mathcal X$ and $f_{X_\ell}:\mathcal X_\ell\to \mathcal Y$ as introduced in the statement of \Cref{P:bertini}.  Let $\mathcal X^\circ$ be the smooth locus of $\mathcal X$, and similarly let $\mathcal X^\circ_\ell$ be the smooth locus of $\mathcal X_{\ell}$.  Then we have diagrams

$$
\xymatrix{
\mathcal X^\circ  \ar[d]_f& \mathcal  X^\circ   \times _{\mathcal Y} T^\vee\mathcal Y \ar[l]_<>(0.5){p_1} \ar[d]^{p_2} \ar[r]^<>(0.5){df}& T^\vee\mathcal X^\circ & &
 \mathcal X^\circ_\ell  \ar[d]_{f_{X_\ell}}& \mathcal  X^\circ_\ell   \times _{\mathcal Y} T^\vee\mathcal Y \ar[l]_<>(0.5){p_1} \ar[d]^{p_2} \ar[r]^<>(0.5){df_{X_\ell}}& T^\vee\mathcal X^\circ_\ell \\
\mathcal Y & \ar[l]_p T^\vee\mathcal Y 
&&&\mathcal Y & \ar[l]_p T^\vee\mathcal Y
}
$$
Considering also $\operatorname{Ch}(\mathcal M)$, and looking at fibers, we have the following diagram including the fibers of the various objects:
\begin{equation}\label{E:bigCharFib}
\xymatrix{
\operatorname{Ch}(\mathcal M)_y \ar@{^(->}[rd]& & H^0(\mathbb P^N,\mathcal O_{\mathbb P^N}(1))_x  \ar@{->>}[d] \ar@{->>}[rd]& \\
\ker (d_yf)  \ar@{^(->}[r] & T^\vee_y\mathcal Y \ar[r]^<>(0.5){d_yf} \ar@{->}[rd]_{d_yf_{X_\ell}} & T^\vee_x\mathcal X=\mathfrak m_x/\mathfrak m_x^2 \ar@{->>}[d] \ar@{->>}[r]&  (\mathfrak m_x/\mathfrak m_x^2)/T^\vee_y\mathcal Y\\
\ker (d_yf_{X_\ell})  \ar@{^(->}[ru] & &  \mathfrak m_{X_\ell, x}/\mathfrak m_{X_\ell,x}^2
}
\end{equation}
Recall that as $\mathcal Y$ is smooth, and $x$ is a point of $Q$, which is assumed to be smooth and contained in both $X$ and $\mathcal X$, we have that $T^\vee_y\mathcal Y$ is a $(\operatorname{dim}\mathcal Y)$-dimension vector space,  and that we have a natural identification of $(\operatorname{dim}\mathcal X)$-dimensional vector spaces $T^\vee_x\mathcal X=\mathfrak m_x/\mathfrak m_x^2$.   The map $H^0(\mathbb P^N,\mathcal O_{\mathbb P^N}(1))_x \to (\mathfrak m_x/\mathfrak m_x^2)$ is surjective since the 
 line bundle $\mathcal O_{\mathbb P^N}(1)$ is very ample when restricted to $X$. The map $\mathfrak m_x/\mathfrak m_x^2\to \mathfrak m_{X_\ell,x}/\mathfrak m_{X_\ell,x}^2$ is surjective,  induced by the surjective map  $\mathfrak m_x\to \mathfrak m_{X_\ell,x}$ given by setting the linear form $\ell$ to zero.

We now make an elementary observation for later:

\begin{lem}\label{L:dim-Rx}
In the notation above, with $x$ a point of $Q$ and $y=f(x)$,  we have 
\begin{equation}\label{E:L:dim-Rx}
\dim \ker (d_yf )\le \dim \ker (d_yf_{X_\ell}) \le \dim \ker (d_yf)+1,
\end{equation}
and
\begin{equation}\label{E:L:Rx}
R_x:=\ker \left(H^0(\mathbb P^N,\mathcal O_{\mathbb P^N}(1))_x \to  (\mathfrak m_x/\mathfrak m_x^2)/ T_y\mathcal Y\right) 
\end{equation}
\begin{align}
\label{E:L:Rx1}=&\{\ell: [\ell]\in d_yf(T_y\mathcal Y)\subseteq \mathfrak m_x/\mathfrak m_x^2\}\\
\label{E:L:Rx2}=&\{\ell: \ell=0, \text { or } x\in \operatorname{Sing}X_\ell, \text { or }  x\in X_\ell^\circ \text{ and } \dim \ker (d_yf_{X_\ell}) = \dim \ker (d_yf)+1\},\\
\label{E:L:Rx3}=&\{\ell: \ell=0, \text { or } x\in \operatorname{Sing}X_\ell, \text { or }   \dim \ker (d_yf_{X_\ell}) = \dim \ker (d_yf)+1\},
\end{align}
where $[\ell]$ denotes the image of $\ell$ under the morphism $H^0(\mathbb P^N,\mathcal O_{\mathbb P^N}(1))_x\to \mathfrak m_x/\mathfrak m_x^2$, and $X_\ell^\circ$ is the smooth locus of $X_\ell$. 
Moreover, 
$$
\dim R_x= N-(\dim \mathcal X-\dim \mathcal Y) -\dim \ker d_yf.
$$
\end{lem}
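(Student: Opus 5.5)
The proof is pure linear algebra in the fiber diagram \eqref{E:bigCharFib}. I would first fix the structure. Since $x\in Q$ is a smooth point, we have $T^\vee_x\mathcal X=\mathfrak m_x/\mathfrak m_x^2$, of dimension $\dim\mathcal X$. Assume $\ell\neq 0$ vanishes at $x$. Then the cotangent space of $X_\ell$ at $x$ is $\mathfrak m_{X_\ell,x}/\mathfrak m_{X_\ell,x}^2=(\mathfrak m_x/\mathfrak m_x^2)/\langle[\ell]\rangle$. Moreover, $x$ is a smooth point of $X_\ell$ exactly when $[\ell]\neq 0$; when $[\ell]=0$, the section $X_\ell$ is singular at $x$. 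The codifferential of the composite is
$$
d_yf_{X_\ell}=\rho\circ d_yf, \qquad \rho:\mathfrak m_x/\mathfrak m_x^2\twoheadrightarrow \mathfrak m_{X_\ell,x}/\mathfrak m_{X_\ell,x}^2 ,
$$
with $\ker\rho=\langle[\ell]\rangle$ of dimension at most one. From this I get
$$
\ker(d_yf)\subseteq\ker(d_yf_{X_\ell})=(d_yf)^{-1}\langle[\ell]\rangle ,
$$
and $\dim\ker(d_yf_{X_\ell})/\ker(d_yf)=\dim\bigl(\langle[\ell]\rangle\cap\operatorname{im}d_yf\bigr)\le 1$. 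This gives \eqref{E:L:dim-Rx}. In the degenerate case $[\ell]=0$ one reads the cotangent space as $\mathfrak m_x/\mathfrak m_x^2$ itself and gets equality on the left.

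For the set descriptions, \eqref{E:L:Rx}$=$\eqref{E:L:Rx1} is just the definition of the kernel: the quotient in \eqref{E:L:Rx} is by the image $d_yf(T^\vee_y\mathcal Y)$. For \eqref{E:L:Rx1}$=$\eqref{E:L:Rx2} I would split into cases.
\begin{itemize}
\item If $\ell=0$, or if $[\ell]=0$, which is equivalent to $x\in\operatorname{Sing}X_\ell$, then $[\ell]=0$ lies in the image trivially.
\item If $[\ell]\neq0$, then $x\in X_\ell^\circ$. By the computation above, $[\ell]\in\operatorname{im}d_yf$ iff $\langle[\ell]\rangle\cap\operatorname{im}d_yf\neq0$, iff the kernel grows by exactly one.
\end{itemize}
The passage to \eqref{E:L:Rx3} drops the hypothesis $x\in X_\ell^\circ$ in the last clause. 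This is harmless: if $x\in\operatorname{Sing}X_\ell$, then $\ell$ already belongs to the set via the middle clause, so the two unions coincide.

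For the dimension count, the map $H^0(\mathbb P^N,\mathcal O(1))_x\to\mathfrak m_x/\mathfrak m_x^2$ is surjective by very ampleness, and its source has dimension $N$. Hence the composite to the quotient $(\mathfrak m_x/\mathfrak m_x^2)/\operatorname{im}d_yf$ is surjective too. The target has dimension $\dim\mathcal X-\operatorname{rk}d_yf=\dim\mathcal X-\dim\mathcal Y+\dim\ker d_yf$. Therefore
$$
\dim R_x=N-(\dim\mathcal X-\dim\mathcal Y)-\dim\ker d_yf .
$$
The only point needing care is the identification of $\mathfrak m_{X_\ell,x}/\mathfrak m^2_{X_\ell,x}$ with the quotient by $\langle[\ell]\rangle$, together with smoothness of $X_\ell$ at $x$ being equivalent to $[\ell]\ne0$. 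Both are standard, because $X$ is smooth at $x$ and $X_\ell$ is cut out by one equation. I would also use that on $Q$ the stack and its coarse space agree, so that $T^\vee_x\mathcal X=\mathfrak m_x/\mathfrak m_x^2$.
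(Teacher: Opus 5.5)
Your proof is correct and follows essentially the same route as the paper: linear algebra in the fiber diagram \eqref{E:bigCharFib}, a case split according to whether $[\ell]=0$ (equivalently $x\in\operatorname{Sing}X_\ell$) or not, and the dimension count from the surjectivity of $H^0(\mathbb P^N,\mathcal O_{\mathbb P^N}(1))_x\to\mathfrak m_x/\mathfrak m_x^2$. Your explicit identifications $\ker(d_yf_{X_\ell})=(d_yf)^{-1}\langle[\ell]\rangle$ and $\dim\ker(d_yf_{X_\ell})/\ker(d_yf)=\dim(\langle[\ell]\rangle\cap\operatorname{im}d_yf)$ just make precise the ``diagram chase'' that the paper invokes.
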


\begin{proof}
Let us first check that $\dim \ker (d_yf )\le \dim \ker (d_yf_{X_\ell}) \le \dim \ker (d_yf)+1$.  The first inequality is clear from a diagram chase.  Now, if $\ell$ vanishes to order $2$ at $x$ (i.e., $X_\ell$ is singular at $x$), then the surjection $\mathfrak m_x/\mathfrak m_x^2 \to \mathfrak m_{X_\ell,x}/\mathfrak m_{X_\ell,x}^2$ is an isomorphism, so $\dim \ker (d_yf )= \dim \ker (d_yf_{X_\ell}) $.  On the other hand, if $\ell$ vanishes to order $1$ at $x$, i.e., $X_\ell$ is smooth at $x$, then $\dim  \mathfrak m_{X_\ell,x}/\mathfrak m_{X_\ell,x}^2=\dim X_\ell=\dim X-1$, so the kernel of the surjection $\mathfrak m_x/\mathfrak m_x^2\to \mathfrak m_{X_\ell,x}/\mathfrak m_{X_\ell,x}^2$ has dimension $1$ (spanned by the image of $\ell$).  
A diagram chase shows that $\dim \ker (df_{X_\ell})_y\le \dim (\ker df)_y+1$.

Now let us consider the equality of sets in \eqref{E:L:Rx}--\eqref{E:L:Rx3}. The equality of  \eqref{E:L:Rx} with \eqref{E:L:Rx1} is obvious from the definitions (see diagram \eqref{E:bigCharFib}).
Moving on to \eqref{E:L:Rx2}, let us for brevity set $S_x$ to be the set in \eqref{E:L:Rx2}.
Assume first that $\ell \in R_x$ and $\ell\ne 0$.  There are two possibilities, either the image of $\ell$ in $\mathfrak m_x$ lies in  $\mathfrak m_x^2$, in which case $x\in \operatorname{Sing}X_\ell$, or the image of $\ell$ in $\mathfrak m_x$ does not lie in $\mathfrak m_x^2$, but it does lie in the image of $T_y\mathcal Y$ under $d_yf$.    Then, considering the diagram \eqref{E:bigCharFib}, and the fact that $\ell$ gets sent to zero in the map from $\mathfrak m_x\to \mathfrak m_{X_\ell,x}$,  there must be a strict containment $\ker d_yf\subsetneq \ker d_yf_{X_\ell}$, and we are done by the first part of the lemma.  Thus $R_x\subseteq S_x$.  The opposite containment is similar. The equality of $S_x$ with the last set \eqref{E:L:Rx3} also follows in the same way.

The dimension count for $R_x$ is clear, since the map $H^0(\mathbb P^N,\mathcal O_{\mathbb P^N}(1))_x \to  (\mathfrak m_x/\mathfrak m_x^2)/(T_y\mathcal Y)$ is surjective, and $x$ is a point of $Q$, which is a smooth point of $X$ by assumption. 
\end{proof}

We can now prove a Bertini type theorem for the stratification $\mathcal Y_i(\mathcal M,f)$:

\begin{lem}\label{L:Y(Mf)strat}
Suppose that  $\dim \mathcal Y_i(\mathcal M,f) \le \dim \mathcal Y-i$ for all $i$.  This holds for instance if $\mathcal M$ is holonomic and $f$ is smooth (see \Cref{L:Y(M)strat} and \Cref{R:YMfsm}). Then for a general hyperplane section $X_\ell$ of $X$ and the associated composition $f_{X_\ell}:\mathcal X_\ell \to \mathcal X\to \mathcal Y$, we also have  $\dim \mathcal Y_i(\mathcal M,f_{X_\ell}) \le \dim \mathcal Y-i$ for all $i$.
\end{lem}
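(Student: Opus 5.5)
We argue by an incidence-variety dimension count, using \Cref{L:dim-Rx} to bound how much the strata can jump. Everything happens at points $x\in Q$, where $\mathcal X$ and $X$ are smooth and the diagram \eqref{E:bigCharFib} applies. (The statement implicitly means the strata of $f|_Q$ and $f_{X_\ell}|_{Q_\ell}$, as in \Cref{P:bertini}.)

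\emph{Step 1: the strata can only jump by one.} By \eqref{E:L:dim-Rx}, for every $x\in Q\cap X_\ell$ with $y=f(x)$ we have
\[
\dim\ker(d_yf_{X_\ell})\in\{\dim\ker(d_yf),\,\dim\ker(d_yf)+1\}.
\]
Hence the point $y$ (taken as the image of the point $x$ of $Q_\ell$) lies in $\mathcal Y_i(\mathcal M,f_{X_\ell})$ only if either $y\in\mathcal Y_i(\mathcal M,f)$ already, or $y\in\mathcal Y_{i-1}(\mathcal M,f)$ and the jump occurs. By \eqref{E:L:Rx3}, the jump occurs only if $\ell\in R_x$. The first case is harmless, since
\[
\dim\mathcal Y_i(\mathcal M,f)\le\dim\mathcal Y-i
\]
by hypothesis. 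So it suffices to show that, for general $\ell$, the set
\[
B_i(\ell):=\{\,x\in Q\cap X_\ell:\ f(x)\in\mathcal Y_{i-1}(\mathcal M,f)\setminus\mathcal Y_i(\mathcal M,f),\ \ell\in R_x\,\}
\]
has image in $\mathcal Y$ of dimension at most $\dim\mathcal Y-i$.

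\emph{Step 2: the incidence variety.} Stratify $Q$ by the value $k=\dim\ker(d_{f(x)}f)$, which is constructible. Let $Q_{i,k}$ be the locus of those $x$ with this $k$ and with $f(x)$ in the stratum $\mathcal Y_{i-1}\setminus\mathcal Y_i$. Consider
\[
I_{i,k}=\{(x,[\ell]):\ x\in Q_{i,k},\ \ell\in R_x\setminus 0\}\subseteq Q_{i,k}\times\mathbb P^{N\vee}.
\]
By \Cref{L:dim-Rx}, the fibers over $Q_{i,k}$ are projective spaces of dimension $N-(\dim\mathcal X-\dim\mathcal Y)-k-1$. Therefore
\[
\dim I_{i,k}\le\dim Q_{i,k}+N-1-(\dim\mathcal X-\dim\mathcal Y)-k.
\]
For the general $[\ell]$ in $\mathbb P^{N\vee}$, the fiber of $I_{i,k}$ over $[\ell]$ thus has dimension at most
\[
\dim Q_{i,k}-(\dim\mathcal X-\dim\mathcal Y)-k,
\]
and is empty if this number is negative.

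\emph{Step 3: pass to images in $\mathcal Y$.} We have
\[
\dim Q_{i,k}\le\dim f(Q_{i,k})+(\dim\mathcal X-\dim\mathcal Y),
\]
because $f|_Q$ is flat, so its fibers have dimension $\dim\mathcal X-\dim\mathcal Y$. Also $f(Q_{i,k})\subseteq\mathcal Y_{i-1}(\mathcal M,f)$, which has dimension at most $\dim\mathcal Y-i+1$. Combining these with Step 2, the general fiber $B_i(\ell)$ has dimension at most $\dim\mathcal Y-i+1-k$. Its image in $\mathcal Y$ therefore has dimension at most $\dim\mathcal Y-i+1-k$, which is $\le\dim\mathcal Y-i$ as soon as $k\ge 1$.

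\emph{Step 4: the case $k=0$, the main obstacle.} When $k=0$ the naive count loses exactly one dimension and needs a refinement. The fix I would try first is to count fiberwise over $\mathcal Y$ instead of over $Q$. For fixed $y$ in $f(Q_{i,0})$, the fiber $f^{-1}(y)\cap Q$ has dimension $d=\dim\mathcal X-\dim\mathcal Y$. For general $\ell$, the condition $\ell\in R_x$ has codimension
\[
N-\dim R_x=d+k=d
\]
in the space of forms vanishing at $x$. Together with the one condition that $\ell$ vanishes at $x$, this is $d+1$ conditions, more than the $d$-dimensional family of $x$ over $y$. Hence for general $\ell$ the set of such $y$ has codimension at least one in $f(Q_{i,0})$, which has dimension at most $\dim\mathcal Y-i+1$. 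This yields the bound $\dim\mathcal Y-i$.

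To make this rigorous, I would run the incidence count with the vanishing condition $x\in X_\ell$ included, i.e.\ use $R_x\subseteq H^0(\mathcal O(1))_x$ as a codimension-$(d+k+1)$ subspace of $H^0(\mathcal O(1))$. Redoing Step 2 this way gains one dimension uniformly in $k$, giving a general fiber of dimension at most $\dim f(Q_{i,k})-k-1\le\dim\mathcal Y-i-k$. This handles all $k$ at once. Since there are finitely many $(i,k)$, a general $\ell$ works for all of them simultaneously.
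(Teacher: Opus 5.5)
Your argument is correct, but the ``$k=0$ obstacle'' in Step~4 is an arithmetic slip, not a genuine difficulty. The fibers $\mathbb P R_x$ in Step~2 already encode $\ell(x)=0$, since $R_x\subseteq H^0(\mathbb P^N,\mathcal O_{\mathbb P^N}(1))_x$. Also, $\mathbb P H^0(\mathbb P^N,\mathcal O_{\mathbb P^N}(1))$ has dimension $N$. So the general fiber of $I_{i,k}$ has dimension at most $\dim I_{i,k}-N\le \dim Q_{i,k}-d-k-1$, with $d=\dim\mathcal X-\dim\mathcal Y$, not $\dim Q_{i,k}-d-k$. Once this is corrected, Step~3 gives the bound $\dim\mathcal Y-i-k$ for every $k$ directly. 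The final paragraph of Step~4 is the same incidence variety with the subtraction done right, not a new condition.

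Your route differs from the paper's. The paper first shows that for general $\ell$ there is no jump at a general point of each component of each $\mathcal Y_i(\mathcal M,f)$. It then asserts that there is also no jump at a general point of any component $\mathcal Z_i$ of $\mathcal Y_i(\mathcal M,f_{X_\ell})$, and concludes $\mathcal Z_i\subseteq \mathcal Y_i(\mathcal M,f)$. That is shorter, but it does not account for components of $\mathcal Y_i(\mathcal M,f_{X_\ell})$ created by the jump locus itself, and these do occur. Take $\mathcal M=\mathcal O_{\mathcal Y}$ and the projection $f\colon \mathbb P^1\times\mathbb P^1\to\mathbb P^1$; then $\mathcal Y_1(\mathcal M,f)=\emptyset$. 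A general hyperplane section in the embedding by $\mathcal O(a,b)$, with $a\ge 1$ and $b\ge 2$, is a degree-$b$ cover of $\mathbb P^1$ with ramification of degree $2a(b-1)>0$ (Riemann--Hurwitz). So $\mathcal Y_1(\mathcal M,f_{X_\ell})$ is its nonempty branch locus, and the jump occurs over every point of it. Your incidence count is exactly what controls such components; it is the same type of estimate the paper uses for condition~\ref{E:bertini-nc} in the proof of \Cref{P:bertini}. It bounds by $\dim\mathcal Y-i-k$ the image of the jump locus lying over $\mathcal Y_{i-1}(\mathcal M,f)\smallsetminus\mathcal Y_i(\mathcal M,f)$ with $\dim\ker(d_xf)=k$. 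So only the dimension bound is needed, not the containment.
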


\begin{proof} Let $\ell$ be a non-zero linear form in the space $H^0(\mathbb P^N,\mathcal O_{\mathbb P^N}(1))_x$ of linear forms vanishing  a point $x$ of $Q\subseteq \mathcal X$ with $y=f(x)$.  We see from the equality of the sets \eqref{E:L:Rx1} and \eqref{E:L:Rx3} in \Cref{L:dim-Rx} that we will have  $\ker d_yf =\ker d_yf_{X_\ell}$ so long as the image of   $\ell$  in  $\mathfrak m_x/\mathfrak m_x^2$ is not contained in $(d_yf)(T_y\mathcal Y)$.  Since the fiber dimension of $f$ is assumed to be positive, $(d_yf)(T_y\mathcal Y)$ is a linear space strictly contained in  $\mathfrak m_x/\mathfrak m_x^2$.  Therefore, for a general hyperplane section $X_\ell$, we have  $\ker d_yf =\ker d_yf_{X_\ell}$.  The same holds for any finite collection of points, and so for a general  hyperplane section $X_\ell$, we have  $\ker d_yf =\ker d_yf_{X_\ell}$ for a general point of every irreducible component of every $\mathcal Y_i(\mathcal M,f)$.

Now consider an irreducible component $\mathcal Z_i$ of  $ \mathcal Y_i(\mathcal M,f_{X_\ell})$.  
Let $y$ be a general point of $\mathcal Z_i$.  We have $\dim \operatorname{Ch}(\mathcal M)_y$ and $\dim \ker d_yf_{X_\ell}$ are both minimal among points of $\mathcal Z_i$ (both functions are upper semi-continuous).  There are two possibilities, either at the general point $y$ of $\mathcal Z_i$ we have $\dim \ker d_yf_{X_\ell}=\dim \ker d_yf$, or we have $\dim \ker d_yf_{X_\ell}=\dim \ker d_yf+1$. 
From the previous paragraph, the latter does not happen at a general point of $\mathcal Z_i$.  So we must have  $\dim \ker d_yf_{X_\ell}=\dim \ker d_yf$, 
in which case we have $\mathcal Z_i\subseteq \mathcal Y_i(\mathcal M,f)$; in particular, $\dim \mathcal Z_i\le \dim \mathcal Y-i$ from \Cref{L:Y(M)strat}.  
\end{proof}

With this we are now ready to prove \Cref{P:bertini}:

\begin{proof}[Proof of \Cref{P:bertini}]
Let $f:\mathcal X\to \mathcal Y$ be as in \Cref{P:bertini}, and assume that we have, as in the proposition, chosen an embedding of the coarse moduli space $X\subseteq \mathbb P^N$.  
The fact that a general hyperplane section of $X$ satisfies the conditions \ref{E:bertini-fib}--\ref{E:bertini-comp} is the content of \cite[Lem.~3.1]{KV04} as it is applied in the proof of \cite[Thm.~2.1]{KV04}.   Consequently, we can focus our attention on \ref{E:bertini-strat} and \ref{E:bertini-nc}.  Moreover, in \Cref{L:Y(Mf)strat} we showed that \ref{E:bertini-strat} holds for the general hyperplane section.  So it remains to show  \ref{E:bertini-nc}.

As before, for a  $\mathbb C$-point $x$ of $Q\subseteq \mathcal X$, we will denote by $y$ the point  $y=f(x)$.
Now, given a nonzero linear form $\ell$ with zero set passing through $x$, we want to consider the condition that the hyperplane section $ X_\ell$ has the property that $X_\ell$ is singular at $x$, or $X_\ell$ is smooth at $x$ but $f_{\mathcal X_\ell}:\mathcal X_\ell \to \mathcal X\to \mathcal Y$ fails to be non-characteristic at $x$.  We will call these hyperplanes the ``bad'' hyperplane sections at $x$,  and will denote the set by $B_x$:
$$
B_x:=\{\ell : \ell = 0, \text{ or } x\in \operatorname{Sing}(X_\ell), \text{ or } x\in X^\circ_\ell \text{ and }  \operatorname{Ch}(\mathcal M)_y \cap \ker (d_yf_{X_\ell})\ne 0\}$$
$$
 \subseteq H^0(\mathbb P^N,\mathcal O_{\mathbb P^N}(1))_x.
$$
It is important to note that for any $x$ in $Q$, we have by assumption that $f$ is non-characteristic at $x$, so that $ \operatorname{Ch}(\mathcal M)_y \cap \ker (d_yf)=0$.  In particular, if $\ell\in B_x$ is nonzero, we have that  either $X_\ell$ is singular at $x$ or, in order for $ \operatorname{Ch}(\mathcal M)_y \cap \ker (d_yf_{X_\ell})\ne 0$, we must have   $\dim \ker (d_yf_{X_\ell})>\dim \ker (d_yf)$, so that by \eqref{E:L:Rx3} of \Cref{L:dim-Rx},  we have $\ell \in R_x$ \eqref{E:L:Rx}. 
In other words,
$$
B_x\subseteq R_x.
$$
Using the fact that $ \operatorname{Ch}(\mathcal M)_y \cap \ker (d_yf)=0$, then from \eqref{E:L:Rx1} of  \Cref{L:dim-Rx} it is clear that 
$$
\ell\in B_x \iff [\ell] \subseteq (d_yf)(\operatorname{Ch}(\mathcal M)_y) \subseteq \mathfrak m_x/\mathfrak m_x^2,
$$
where $[\ell]$ denotes the image of $\ell$ in $ \mathfrak m_x/\mathfrak m_x^2$.  
In other words, we have a fibered product diagram
$$
\xymatrix{
B_x \ar@{^(->}[r]  \ar@{->>}[d]& R_x \ar@{^(->}[r] \ar@{->>}[d]& H^0(\mathbb P^N,\mathcal O_{\mathbb P^N}(1))_x\ar@{->>}[d]\\
(d_yf)(\operatorname{Ch}(\mathcal M)_y)\ar@{^(->}[r]& (d_yf)(T^\vee_y\mathcal Y)\ar@{^(->}[r]& \mathfrak m_x/\mathfrak m_x^2
}
$$
where the right hand square is a diagram of linear spaces and linear maps, and the left vertical arrow is a surjection of cones.

We now estimate the dimension of $B_x$.
To this end, let $i$ be the largest  integer such that $y$ is in $ \mathcal Y_i(\mathcal M,f)$.   Using \eqref{E:L:nonChYMf}, we have the estimate 
$$\dim (d_yf)(\operatorname{Ch}(\mathcal M)_y)\le \dim \operatorname{Ch}(\mathcal M)_y = i-\dim \ker(d_yf).$$
Therefore, since $\dim ( (d_yf)(T_y\mathcal Y) )= \dim T_y\mathcal Y- \dim \ker(d_yf)= \dim \mathcal Y- \dim \ker(d_yf)$, we see that the codimension of $(d_yf)(\operatorname{Ch}(\mathcal M)_y)\subseteq (d_yf)(T_y\mathcal Y)$ is at least $\dim \mathcal Y-i$.
Since  $B_x$ is the pre-image of the cone $(d_yf)(\operatorname{Ch}(\mathcal M)_y)$ under the surjective linear map $R_x\twoheadrightarrow (d_yf)(T_y\mathcal Y)$, we have that $B_x$ has codimension at least $\dim \mathcal Y-i$, in $R_x$.  We can then use the dimension estimate of $R_x$ in  \Cref{L:dim-Rx} to give the dimension estimate
\begin{align}
\dim B_x&\le \dim R_x-(\dim \mathcal Y -i)\\
&\le [N-(\dim \mathcal X-\dim \mathcal Y)-\dim \ker d_yf] -(\dim \mathcal Y-i)\\ \label{E:dimBx}
 &=N-\dim \mathcal X-\dim \ker d_yf +i.
\end{align}

Now consider the algebraic incidence correspondence:
$$
B:=\{(x,H): x\in Q\subseteq \mathcal X, \ H\in \mathbb PB_x\}\subseteq \mathcal X\times \mathbb PH^0(\mathbb P^N,\mathcal O_{\mathbb P^N}(1)).
$$
 We can estimate the dimension of $B$ by considering the projection onto $\mathcal X$, and then composing with the morphism to $\mathcal Y$.
Let  $B_i$ be an irreducible component of  $B$, and perhaps after re-indexing, assume that for a generic point $x$ in  $B_i$, that $i$ is the largest integer such that $y=f(x)$ is in $\mathcal Y_i(\mathcal M,f)$.  From this we also have that the image of $B_i$ in $\mathcal Y$ is contained in $\mathcal Y_i(\mathcal M,f)$.   In other words, we have chosen $i$ to be the largest integer with the image of $B_i$ contained in  $\mathcal Y_i(\mathcal M,f)$.  
Using the assumption that the fiber dimension of $\mathcal X$ over $\mathcal Y$ is constant, we can then make the following dimension estimate using \eqref{E:dimBx}:
\begin{align*}
\dim B_i&\le \dim \mathcal Y_i(\mathcal M,f) + (\dim_{\mathcal Y}\mathcal X) +\dim \mathbb PB_x\\
& \le (\dim \mathcal Y-i) +(\dim \mathcal X-\dim \mathcal Y) + \left(N-\dim \mathcal X-\dim \ker d_yf +i -1\right)\\
& \le N-\dim \ker d_yf -1<N.
\end{align*}
In particular, no component of $B$ dominates $\mathbb{P}H^0(\mathbb P^N,\mathcal O_{\mathbb P^N}(1))$, and so, for a general hyperplane section $X_\ell$, 
the map $f_{X_\ell}|_{Q_\ell}$ is non-characteristic with respect to $\mathcal M$, where $Q_\ell=Q\cap X_\ell$.  In other words, we have established \ref{E:bertini-nc} for a general $X_\ell$. 
\end{proof}

\subsection{Non-characteristic covers for DM stacks}

\begin{teo}[Non-characteristic cover]\label{T:nccover}
Let $\mathcal M$ be a regular holonomic $D$-module on a smooth separated DM stack $\mathcal X$ of finite type over $\mathbb C$ with (quasi-)projective coarse moduli space.  Then there exists a finite flat   morphism $q:V\to \mathcal X$ from a smooth (quasi-)projective variety $V$ such that $q$ is non-characteristic for $\mathcal M$.
\end{teo}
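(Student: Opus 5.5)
We follow the Kresch--Vistoli construction of a finite flat cover from \cite[Thm.~2.1]{KV04}, and carry the non-characteristic condition through it with \Cref{P:bertini}. By \cite{KV04} (and \cite{kresch09} in the quasi-projective case) there is a smooth schematic projective morphism $f:\mathcal P\to\mathcal X$ of constant relative dimension. Here $\mathcal P$ is an algebraic space, in fact a quasi-projective scheme with (quasi-)projective coarse space $P$. Concretely, one takes the frame bundle of a vector bundle $\mathcal E$ on $\mathcal X$ whose stabilizer actions are faithful, and then compactifies it fiberwise to a projective bundle or Grassmannian. The frame bundle is then an open subscheme $Q\subseteq\mathcal P$, smooth over $\mathcal X$, with complement of dimension less than the fiber dimension. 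Conditions \ref{E:bertini-fib}--\ref{E:bertini-comp} of \Cref{P:bertini} are exactly what \cite{KV04} verify at the start.

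For the two new conditions we use that $f|_Q$ is smooth. By \Cref{R:YMfsm} we have $\mathcal X_i(\mathcal M,f|_Q)=\mathcal X_i(\mathcal M)$, and by \Cref{L:Y(M)strat}, using holonomicity of $\mathcal M$, this has dimension at most $\dim\mathcal X-i$; this gives \ref{E:bertini-strat}. Smooth morphisms have injective codifferential, so $\ker(d_yf)=0$ and \eqref{E:non-chy} holds trivially, giving \ref{E:bertini-nc}.

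Next we induct on the fiber dimension. While it is positive, \Cref{P:bertini} lets us replace $\mathcal P$ by a general hyperplane section $\mathcal P_H=P_H\times_P\mathcal P$ for a sufficiently ample embedding of $P$. All of \ref{E:bertini-fib}--\ref{E:bertini-nc} are preserved, with $Q$ replaced by $Q\cap H$, and the fiber dimension drops by one. After $\dim\mathcal P-\dim\mathcal X$ steps we reach $q:V\to\mathcal X$ with finite fibers. By \ref{E:bertini-comp}, $V-Q_V$ has negative dimension, so $V=Q_V$. Hence $V$ is a smooth quasi-projective scheme, projective in the projective case because it is a closed subscheme of the projective $\mathcal P$. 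The map $q$ is flat by \ref{E:bertini-Q}, surjective, and finite since it is projective and quasi-finite. It is non-characteristic for $\mathcal M$ by \ref{E:bertini-nc}; the Tor-vanishing part of \Cref{D:nonchar} is automatic because $q$ is flat. Finally, we discard connected components of $V$ that do not meet a chosen component, or otherwise arrange irreducibility, so that $V$ is a variety. Each connected component of $V$ is smooth and therefore integral. If $\mathcal X$ is connected, a single component is still finite flat and surjective, being open and closed in image.

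The main obstacle is already absorbed into \Cref{P:bertini}. What remains to be checked carefully is the starting input: that the Kresch--Vistoli projective compactification $\mathcal P$ really has a coarse space $P$ with the required projectivity, and that the smooth open $Q$ is a scheme on which the hyperplane sections behave as in \cite[Lem.~3.1]{KV04}. The expected approach is to quote \cite[proof of Thm.~2.1]{KV04} directly for this setup. The only new verification is the two conditions above for smooth $f|_Q$, which follow at once from \Cref{R:YMfsm} and \Cref{L:Y(M)strat}.
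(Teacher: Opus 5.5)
Your proposal is correct and follows the paper's proof essentially step for step: quote the Kresch--Vistoli setup $(f:\mathcal P\to\mathcal X,\ Q)$ for \ref{E:bertini-fib}--\ref{E:bertini-comp}, get \ref{E:bertini-strat} and \ref{E:bertini-nc} from smoothness of $f$ via \Cref{R:YMfsm} and \Cref{L:Y(M)strat}, and iterate \Cref{P:bertini}. Your extra bookkeeping is also fine: $V=Q_V$ by \ref{E:bertini-comp}, finiteness from projective plus quasi-finite, and passing to a connected component.

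One correction to your aside. In this setup $\mathcal P$ is in general a DM stack with (quasi-)projective coarse space $P$, and it is the open $Q$ that is a scheme. A fiberwise-compactified frame bundle would have boundary of codimension one in the fibers, which violates \ref{E:bertini-comp}. So projectivity of $V$ should be read off from $V=P\cap H_1\cap\cdots\cap H_k\subseteq P$, not from $\mathcal P$ being a projective scheme.
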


\begin{proof}
To start, note that the stacks we are considering in this paper are \emph{quotient stacks}, i.e., they are of the form $[W/G]$, where $W$ is a quasi-projective scheme and $G$ is a linear algebraic group (see \cite[Thm. 4.4]{kresch09}), and   so,  in particular, $\mathcal X$ satisfies the hypotheses of \cite[Thm.~2.1]{KV04}.

Our strategy to prove \Cref{T:nccover} is to modify the proof of \cite[Thm.~2.1]{KV04} using \Cref{P:bertini}.  
To be precise, under the hypotheses of \Cref{T:nccover}, Kresch--Vistoli begin their proof by showing that there exists a smooth separated DM stack $\mathcal P$ of finite type over $\mathbb C$ with (quasi-)projective coarse moduli space $P$, and a smooth 
 projective morphism $f:\mathcal P\to \mathcal X$ with an open subscheme $Q\subseteq \mathcal P$ satisfying conditions  conditions \ref{E:bertini-fib}--\ref{E:bertini-comp} of \Cref{P:bertini} (see \cite[Proof of Thm.~2.1, p.4]{KV04} and \cite[Lem.~2.12, \S 4.2]{EHKV01}).

 Since $f$ is smooth, it trivially satisfies \ref{E:bertini-nc} of \Cref{P:bertini}, and since in addition $\mathcal M$ is holonomic, it also satisfies \ref{E:bertini-strat} (\Cref{R:YMfsm} and \Cref{L:Y(M)strat}).

By repeated applications of \Cref{P:bertini}, taking general hyperplane sections, one obtains a  finite flat morphism $q:V\to \mathcal X$ from a smooth (quasi-)projective variety $V$ such that $q$ is non-characteristic for $\mathcal M$.
\end{proof}

\section{Positivity for Hodge modules}

The purpose of this section is to generalize a result of Popa--Wu 
\cite[Thm.~A]{PW16} and a result of Popa--Schnell \cite[Thm.~3.5]{PS17} to the case of smooth DM stacks.   The positivity result, \Cref{T:PW-TA},   generalizes \cite[Thm.~A]{PW16}, which in turn builds on results in \cite{Zuo00negativity, brunebarbe18}.
 \Cref{T:PS-3.5/18.4} uses this positivity result to establish the existence of a type of Viehweg--Zuo sheaf,  
 generalizing \cite[Thm.~3.5]{PS17}, which in turn builds on   \cite{VZ02}.

\subsection{A result of Popa--Wu}

The setup is as follows.  If  $\mathsf M$ is a pure  polarizable Hodge module on a smooth projective variety $X$, then, considering the associated regular holonomic filtered left $D$-module $(M,F_\bullet)$, we have a natural Kodaira--Spencer type $\mathcal O_X$-module homomorphism 
$$
\theta_p:\operatorname{gr}_p^FM\longrightarrow \operatorname{gr}_{p+1}^F M\otimes \Omega^1_X.
$$
We denote by $K_p(\mathsf M):=\ker\theta_p$, which is a coherent $\mathcal O_X$-module, as the $\operatorname{gr}_p^FM$ are.  If $\mathsf M$ has strict support $X$, Popa--Wu show that the torsion-free sheaf $K_p(\mathsf M)^\vee$ is weakly positive for any $p$ \cite[Thm.~A]{PW16}.

Now suppose  $\mathsf M$ is a pure polarizable Hodge module on $\mathcal X$ with associated regular holonomic filtered left $D_{\mathcal X}$-module $(\mathcal M,F_\bullet )$.  In this situation we also have a natural Kodaira--Spencer type $\mathcal O_X$-module homomorphism 
$$
\theta_p:\operatorname{gr}_p^F\mathcal M\to \operatorname{gr}_{p+1}^F \mathcal M\otimes \Omega^1_{\mathcal X}.
$$
As above, we denote by $\mathcal K_p(\mathsf M):=\ker\theta_p$, which is a coherent $\mathcal O_{\mathcal X}$-module.

The following theorem generalizes  \cite[Thm.~A]{PW16} to DM stacks:

\begin{teo}\label{T:PW-TA}
 Let $\mathcal X$ be a smooth proper DM stack over $\mathbb C$ with projective coarse moduli space.  If $\mathsf M$ is a pure polarizable  Hodge module on $\mathcal X$ with strict support $\mathcal X$, then the torsion-free sheaf ${\mathcal K}_p(\mathsf M)^\vee$ is  weakly positive for any $p$.
\end{teo}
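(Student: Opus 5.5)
The plan is to pull everything back to a smooth projective variety, where \cite[Thm.~A]{PW16} applies. By \Cref{T:nccover}, there is a finite flat morphism $q:V\to\mathcal X$ from a smooth projective variety $V$ that is non-characteristic for $\mathcal M$. Since $\mathcal X$ is integral, we may replace $V$ by a connected component dominating $\mathcal X$; the restriction of $q$ to it is still finite flat, surjective and non-characteristic. By \Cref{L:ffNCpbH} and \Cref{D:ffncpbH}, $q^*\mathsf M$ is then a pure polarizable Hodge module on $V$ with strict support $V$. Its underlying filtered $D_V$-module is the naive pull back $(q^*\mathcal M,q^*F_\bullet)$, and by \Cref{P:f*nonchar} this filtration is good. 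Applying \cite[Thm.~A]{PW16} on $V$ shows that $K_p(q^*\mathsf M)^\vee$ is weakly positive.

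Next I compare $K_p(q^*\mathsf M)$ with $q^*\mathcal K_p(\mathsf M)$. Since $q$ is flat, we have $\operatorname{gr}^F_p(q^*\mathcal M)=q^*\operatorname{gr}^F_p\mathcal M$. The Kodaira--Spencer map $\theta_p$ for $q^*\mathsf M$ is the composite of $q^*\theta_p$ with $\mathrm{id}\otimes dq:\ q^*\operatorname{gr}^F_{p+1}\mathcal M\otimes q^*\Omega^1_{\mathcal X}\to q^*\operatorname{gr}^F_{p+1}\mathcal M\otimes\Omega^1_V$. One checks this étale locally from the formula $\partial_{v}(1\otimes m)=\sum (\partial_v g_i)\otimes\partial_{y_i}m$. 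Because $q$ is flat, $q^*\mathcal K_p(\mathsf M)=\ker(q^*\theta_p)$, and this is contained in $K_p(q^*\mathsf M)$.

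The two kernels agree over the open dense set $V^\circ\subseteq V$ where $q$ is étale, since there $dq$ is an isomorphism. Also, $\operatorname{gr}^F_{p+1}q^*\mathcal M$ is locally free on a dense open set, so $\mathrm{id}\otimes dq$ is injective on it. Dualizing the inclusion $q^*\mathcal K_p\hookrightarrow K_p(q^*\mathsf M)$ gives a morphism $K_p(q^*\mathsf M)^\vee\to (q^*\mathcal K_p(\mathsf M))^\vee=q^*(\mathcal K_p(\mathsf M)^\vee)$, using flatness of $q$ for the last identification. This morphism is an isomorphism over the open set $V^\circ$, intersected with the locus where both kernels are locally free.

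Now \Cref{L:WP-prpty}\ref{L:WP-prpty-surj} applies. Since $K_p(q^*\mathsf M)^\vee$ is weakly positive over some nonempty open $U$, and the morphism is surjective over the dense open set above, $q^*(\mathcal K_p(\mathsf M)^\vee)$ is weakly positive over a nonempty open subset. This open subset can be taken as $q^{-1}$ of an open substack. I would choose it as the preimage of the complement of $q\bigl(V\setminus(U\cap V^\circ\cap\ldots)\bigr)$, which is valid because $q$ is finite and surjective. Then, by \Cref{D:PotWP}, $\mathcal K_p(\mathsf M)^\vee$ is weakly positive.

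I expect the main obstacle to be the careful identification of the Kodaira--Spencer map of the pulled-back Hodge module with $(\mathrm{id}\otimes dq)\circ q^*\theta_p$, including the right-versus-left filtration conventions. The statement that the kernels coincide over the étale locus also needs care; it suffices for the argument and avoids needing equality everywhere.
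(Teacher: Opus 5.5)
Your proposal is correct and follows essentially the same route as the paper. Both arguments take the finite flat non-characteristic cover from \Cref{T:nccover} and use \Cref{L:ffNCpbH} to get $q^*\mathsf M$ with underlying $(q^*\mathcal M,q^*F_\bullet)$, then apply \cite[Thm.~A]{PW16} on $V$. They then use the inclusion $q^*\mathcal K_p(\mathsf M)\hookrightarrow \mathcal K_p(q^*\mathsf M)$, dualize it to a generically surjective map, and use flatness to identify $(q^*\mathcal K_p(\mathsf M))^\vee=q^*(\mathcal K_p(\mathsf M)^\vee)$. Your extra steps, passing to a dominating connected component of $V$ and taking the open set to be $q^{-1}$ of an open substack, supply details the paper leaves implicit.
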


\begin{proof}
Let $q:V\to \mathcal X$ be the  finite flat non-characteristic morphism with respect to $\mathsf M$ provided by \Cref{T:nccover}.  The pull back   of $\mathsf M$ to $V$, i.e., $q^*\mathsf M$ (\Cref{D:ffncpbH}), is a pure polarizable Hodge module on $V$ with strict support (\Cref{L:ffNCpbH}). Moreover, if $(\mathcal M,F_\bullet)$ is the underlying filtered $D$-module of $\mathsf M$, then $(q^*\mathcal M,q^*F_\bullet)$ (\Cref{D:fpb-DM} and \Cref{P:f*nonchar}) is the underlying filtered $D$-module of $q^*\mathsf M$ (\Cref{L:ffNCpbH}).

With $\theta_p:\operatorname{gr}_p^F\mathcal M\to \operatorname{gr}_{p+1}^F\mathcal M\otimes \Omega^1_{\mathcal X}$ as above, denote by 
$$
\theta_p(q^*\mathsf M):\operatorname{gr}_p^{q^*F}q^*\mathcal M\to \operatorname{gr}_{p+1}^{q^*F}q^*\mathcal M\otimes \Omega^1_{V}
$$ the Kodaira--Spencer type morphism for $(q^*\mathcal M,q^*F_\bullet)$, with kernel ${\mathcal K}_p(q^*\mathsf M):=\ker \theta_p(q^*\mathsf M)$.  
We have a commutative diagram with exact rows:
$$
\xymatrix@R=1em{
0\ar[r]& {\mathcal K}_p(q^*\mathsf M) \ar[r]&\operatorname{gr}_p^{q^*F}q^*\mathcal M\ar[r]^<>(0.5){ \theta_p(q^*\mathsf M)}& \operatorname{gr}_{p+1}^{q^*F} q^*\mathcal M\otimes \Omega^1_{V}\\
0 \ar[r]&q^*{\mathcal K}_p(\mathsf M)\ar[r] \ar@{->}[u] \ar@{=}[d]&\operatorname{gr}_p^{q^*F}q^*\mathcal M\ar[r]^<>(0.5){q^*\theta_p} \ar@{=}[d] \ar@{=}[u]& \operatorname{gr}_{p+1}^{q^*F} q^*\mathcal M\otimes q^*\Omega^1_{\mathcal X} \ar@{=}[d] \ar@{->}[u]\\
0\ar[r] &q^*{\mathcal K}_p(\mathsf M) \ar[r]&q^*\operatorname{gr}_p^F\mathcal M\ar[r]^<>(0.5){q^*\theta_p}& q^*(\operatorname{gr}_{p+1}^F \mathcal M\otimes \Omega^1_{\mathcal X}).\\
}
$$
The equalities in the bottom half of the diagram come from the fact that the pull back   of the filtration on $\mathcal M$ is the filtration of the pull back   of $\mathcal M$ (\Cref{P:f*nonchar}).   The right vertical arrow 
comes from tensoring the natural inclusion $0\to q^*\Omega^1_{\mathcal X}\to \Omega^1_V$ with $\operatorname{gr}_{p+1}^{q^*F} q^*\mathcal M$.
 The commutativity of the upper right square comes from the definition of the $D$-module structure on the pull back   of a filtered $D$-module.
 A diagram chase shows that the left vertical arrow is an inclusion. (In fact, since the right vertical arrow is generically an isomorphism, a diagram chase implies the left vertical arrow is generically an isomorphism, as well.)

By \cite[Thm.~A]{PW16} applied to $q^*\mathsf M$ on $V$ we have that ${\mathcal K}_p(q^*\mathsf M)^\vee$ is weakly positive.  By dualizing the 
injective morphism $q^*{\mathcal K}_p(\mathsf M) \to  {\mathcal K}_p(q^*\mathsf M)$, we have a generically surjective morphism ${\mathcal K}_p(q^*\mathsf M)^\vee  \to (q^*{\mathcal K}_p(\mathsf M))^\vee$.  
 It follows from \Cref{L:surjWP} that $(q^*{\mathcal K}_p(\mathsf M))^\vee$ is weakly positive.  Finally, since ${\mathcal K}_p(\mathsf M)$ is coherent and $q$ is flat, we have  $(q^*{\mathcal K}_p(\mathsf M))^\vee= q^*({\mathcal K}_p(\mathsf M)^\vee)$ (see e.g., \cite[Proof of Prop.~1.8]{Hart80}).  Therefore,  ${\mathcal K}_p(\mathsf M)^\vee$ is   weakly positive, by definition.
\end{proof}

\subsection{A result of Popa--Schnell}

Given a coherent  $D$-module $(\mathcal M, F_\bullet \mathcal M)$ on $\mathcal X$,  we denote
$$
p(\mathcal M) := \min \{ p :  F_p\mathcal M\ne  0 \}.
$$
In other words, $F_{p(\mathcal M)}\mathcal M = \operatorname{gr}^F
_{p(\mathcal M)} \mathcal M$ 
is the lowest nonzero graded piece in the filtration on $\mathcal M$. If $(\mathcal M, F_\bullet \mathcal M) $ underlies
a Hodge module $\mathsf M$, we also use the notation $p(\mathsf M)$ instead of $p(\mathcal M)$.

Following \cite[Def.~3.4]{PS17}, we make the following  definition:

\begin{dfn}[Large graded submodule]
\label{D:large-graded-sub}
Let $\mathsf M$ be a  pure Hodge module on $\mathcal X$ 
with strict support $\mathcal X$. A graded $\operatorname{gr}^F_\bullet D_{\mathcal X}$-submodule 
$$
\mathcal G_\bullet \subseteq \operatorname{gr}^F_\bullet \mathcal M
$$
is called \emph{large} (with respect to a divisor $\mathcal D$) if there exist a big line bundle $\mathcal A$  and an eﬀective
divisor $\mathcal D$ on $\mathcal X$, together with an integer $\ell \ge 0$, such that:
\begin{itemize}

\item There is a sheaf inclusion $\mathcal A(-\ell \mathcal{D}) \hookrightarrow \mathcal G_{p(\mathsf M)}$

\item the support of the torsion of all of the $\mathcal G_k$ is contained in $\mathcal D$.

\end{itemize}
\end{dfn}

We now use this terminology in the statement of the following theorem, 
which generalizes \cite[Thm.~3.5]{PS17}, establishing the existence of a type of Viehweg--Zuo sheaf:

\begin{teo}\label{T:PS-3.5/18.4}
Let $\mathcal X$ be a smooth proper DM stack over $\mathbb C$ with projective coarse moduli space. Let $\mathsf M$ be a pure polarizable Hodge 
module on $\mathcal X$ with strict support $\mathcal X$ and underlying filtered $D$-module $(\mathcal M, F_\bullet \mathcal M)$,
which is generically a variation of Hodge structure of weight $k$. Assume that there
exists a graded 
$\operatorname{gr}^F_\bullet D_{\mathcal X}$- coherent submodule $\mathcal G_\bullet \subseteq \operatorname{gr}^F_\bullet \mathcal M$ that 
is large with respect to a divisor
$\mathcal D$ (\Cref{D:large-graded-sub}). Then at least one of the following holds:
\begin{enumerate}[label=(\roman*)]
\item \label{E:T:PS-3.5/18.4i} $\mathcal D$ is big.

\item \label{E:T:PS-3.5/18.4ii} There exist integers $s$ and $r$ with $1\le s\le k$ and $1\le r$, a  big coherent sheaf $\mathcal H$ on $\mathcal X$, and an inclusion of sheaves
$$
\mathcal H\hookrightarrow (\Omega_{\mathcal X}^1)^{\otimes s}\otimes \mathcal O_{\mathcal X}(r\mathcal D).
$$
\end{enumerate}
\end{teo}

\begin{proof} The proof is essentially identical to that of \cite[Thm.~3.5]{PS17}.  We reproduce the proof here in order to be able to emphasize the places where one has to adapt the proof in the setting of smooth DM stacks.

Note to begin with that $F_{p(\mathsf M )}\mathcal M$ is a torsion-free sheaf.  Indeed, it suffices to check the torsion-freeness after pull back   to an \'etale cover of $\mathcal X$ by smooth varieties (see \Cref{P:f*nonchar}); the result on smooth varieties is frequently referenced to \cite[Prop.~2.6]{saito_91}, although perhaps \cite[Cor.~(2.9)]{saito_kollar_conj_92} explains this in a little more detail.  It follows that $\mathcal G_{p(\mathsf M )}$ is torsion-free as well. By assumption, there is a big
line bundle $\mathcal A$ on $\mathcal X$ and an integer $\ell \ge 0$, together with an injective sheaf morphism
$$
\mathcal A(-\ell D)\to \mathcal G_{p(\mathsf M)}.
$$

Denoting for simplicity $p= p(\mathsf M )$, the graded $\operatorname{Sym}^\bullet {\mathcal T}_{\mathcal X}$-module structure induces a
sequence of homomorphisms of coherent $\mathcal O_{\mathcal X}$-modules
\begin{equation}\label{E:GG_pChain}
\xymatrix{
0 \ar[r]& \mathcal G_p \ar[r]^<>(0.5){\theta_p}& \mathcal G_{p+1}\otimes \Omega^1_{\mathcal X} \ar[r]^<>(0.5){\theta_{p+1}\otimes 1}& \mathcal G_{p+2}\otimes (\Omega^1_{\mathcal X})^{\otimes 2} \ar[r]& \cdots .
}
\end{equation}

Just as with Hodge modules, we will denote
$$
{\mathcal K}_k={\mathcal K}_k(\mathcal G_\bullet):=\ker \left(\theta_k: \mathcal G_k\longrightarrow \mathcal G_k\otimes \Omega^1_{\mathcal X} \right).
$$
There are obvious inclusions
$$
{\mathcal K}_k\hookrightarrow {\mathcal K}_k(\mathsf M)
$$
which by \Cref{T:PW-TA} and \Cref{L:surjWP} imply that the ${\mathcal K}_k^\vee$ are  weakly positive for all $k$.
 To start making use of this property, note to begin with that given the
inclusion of $\mathcal A(-\ell D)$ into $\mathcal G_p$, there are two possibilities:

The first is that the composition 
$$
\mathcal A(-\ell \mathcal D)\hookrightarrow \mathcal G_p \stackrel{\theta_p}{\to} \mathcal G_{p+1}\otimes \Omega^1_{\mathcal X}
$$
is not injective, so that  $\mathcal A(-\ell \mathcal D)$ has image a torsion sheaf, whose support, by the assumption on $\mathcal G_{p+1}$, must be contained in $\mathcal D$.  It follows that there exists a non-trivial closed substack $\mathcal Z \subseteq \mathcal  X$ supported on $\mathcal D$ such that  $\mathcal A(-\ell \mathcal D) \otimes \mathcal  I_{\mathcal Z} \subseteq  \mathcal K_p$.  This implies that there exists
an integer $r\ge 1$  and an inclusion $\mathcal A(-r\mathcal D)\hookrightarrow \mathcal K_p$, which induces 
a non-trivial
homomorphism
$$
\mathcal K^\vee_p\to \mathcal A^{-1}(r\mathcal D).
$$
Using the weak positivity of
$\mathcal K^\vee_p$ again, we get that $\mathcal A^{-1}(r\mathcal D)$ is pseudo-effective (\Cref{R:WPosLB}).  Now, considering a finite flat morphism $q:V\to \mathcal X$ from a smooth projective variety $V$, we have that $q^*\mathcal A^{-1}(r\mathcal D)$ is  pseudo-effective (\Cref{L:big}).  Since $\mathcal A$ is big, we have that $q^*\mathcal A$ is big (\Cref{L:big}).  Therefore, $q^*r\mathcal D$ must be big, so that $ \mathcal D$ is big (\Cref{L:big}); i.e., we get the condition in \ref{E:T:PS-3.5/18.4i}.

The second possibility is that we have an inclusion
$$
\mathcal A(-\ell \mathcal D)\hookrightarrow \mathcal G_{p+1}\otimes \Omega^1_{\mathcal X}.
$$
We can then repeat the same argument via the morphisms $\theta_s\otimes 1$, with $s\ge p+1$.  The next thing to note however is that there is an $s\le k$
where the inclusions will
have to stop, i.e., such that
$$
\mathcal A(-\ell \mathcal D)\subseteq \mathcal G_{p+s}\otimes (\Omega_{\mathcal X}^1)^{\otimes s} \text { and } \mathcal A(-\ell \mathcal D)\nsubseteq \mathcal G_{p+s+1}\otimes (\Omega_{\mathcal X}^1)^{\otimes s+1}.
$$
Indeed, note that an inclusion $\mathcal A(-\ell \mathcal D) \subseteq \operatorname{gr}_{p+t}^F\mathcal M\otimes (\Omega^1_{\mathcal X})^{\otimes t}$ can only hold so long as $\operatorname{gr}_{p+t}^F\mathcal M$ is not a torsion sheaf.  
Recall, however, that there is a smooth variety $U$ and an \'etale morphism $U\to \mathcal X$ so that the pull back   of $(\mathcal M, F_\bullet \mathcal M)$ to $U$ is a variation of Hodge structure $\mathbf V$ of weight $k$.  Thus on $U$, the pull back   of the sheaves $\operatorname{gr}_{p+t}^F\mathcal M$ coincide with Hodge bundles of $\mathbf V$, and therefore are non-zero only for $t\le k$.  In other words, for $t>k$, the  $\operatorname{gr}_{p+t}^F\mathcal M$ are torsion sheaves.

As above, this implies that there exists some $r\ge 1$ and $1\le s\le k$ such that
$$
\mathcal A(-r\mathcal D)\subseteq \mathcal K_{p+s}\otimes (\Omega^1_{\mathcal X})^{\otimes s}.
$$
We conclude that there exists a non-trivial homomorphism
$$
\mathcal K^\vee_{p+s}\otimes \mathcal A\longrightarrow (\Omega^1_{\mathcal X})^{\otimes s}\otimes \mathcal O_X(r\mathcal D).
$$
Using the  weak positivity of $\mathcal K^\vee_{p+s}$, together with  \Cref{L:pbig-prpty}\ref{L:pbig-prpty-wpb} and \ref{L:pbig-prpty-surj}, 
 and then taking its image under the morphism above 
we obtain an inclusion
$$
\mathcal H\hookrightarrow (\Omega^1_{\mathcal X})^{\otimes s}\otimes \mathcal O_{\mathcal X}(r\mathcal D)
$$
with $\mathcal H$ a  big sheaf on $\mathcal X$; i.e.,   we get the condition in \ref{E:T:PS-3.5/18.4ii}.  
\end{proof}

\section{Positivity for Higgs bundles}

The purpose of this section is to generalize a result of Popa--Wu 
contained in the proof of \cite[Thm.~4.8]{PW16} and a result of Popa--Schnell \cite[Thm.~3.7]{PS17} to the case of smooth DM stacks. 
 The positivity result, \Cref{T:PW-4.8},  generalizes a result from  \cite{PW16}, which in turn builds on results in \cite{Zuo00negativity, brunebarbe18}.
 \Cref{T:PS-3.7/19.1} uses this positivity result to establish the existence of  Viehweg--Zuo sheaves,  
 generalizing \cite[Thm.~3.7]{PS17}, which in turn builds on   \cite{VZ02}.

Proceeding with the discussion, 
we will need a version of \Cref{T:PS-3.5/18.4} 
in the case of (graded logarithmic) Higgs bundles. 
The set-up is as follows: $\mathcal X$ is a smooth proper DM stack over $\mathbb C$ with projective coarse moduli space $X$, and $\mathcal D$ is a simple normal crossings divisor on $\mathcal X$.  
We consider a \emph{graded logarithmic Higgs bundle}
\begin{equation}\label{E:thetaphiggs}
\theta_p:\mathcal E_p \longrightarrow  \mathcal E_{p+1} \otimes \Omega^1_{\mathcal X}(\log \mathcal D).
\end{equation}
To be clear, this is the data of a locally free sheaf  $\mathcal E$ on $\mathcal X$, as well as locally free subsheaves of finite rank $\mathcal E_p \subseteq \mathcal E$ for $p\in \mathbb Z$, such that the natural map $\bigoplus_{p\in \mathbb Z} \mathcal E_p\to \mathcal E$ is an isomorphism, together with an $\mathcal O_{\mathcal X}$-linear map
$$
\theta:\mathcal E \longrightarrow \mathcal E \otimes \Omega^1_{\mathcal X}(\log \mathcal D)
$$
so that the composition $\theta\wedge \theta := (\theta \wedge 1)\circ \theta: 
\mathcal E \to \mathcal E \otimes \bigwedge^2\Omega^1_{\mathcal X}(\log \mathcal D)$ is the zero morphism, and for all $p$, we have $\theta(\mathcal E_p)\subseteq \mathcal E_{p+1}\otimes \Omega^1_{\mathcal X}(\log \mathcal D)$. The condition $\theta\wedge \theta =0$ means that there is a complex
$$
\xymatrix{
0 \ar[r]& \mathcal E \ar[r]^<>(0.5){\theta}& \mathcal E \otimes \Omega^1_{\mathcal X}(\log \mathcal D) \ar[r]^<>(0.5){\theta\wedge 1} 
&  \mathcal E \otimes \bigwedge ^2\Omega^1_{\mathcal X}(\log \mathcal D) \ar[r] & \cdots
}
$$
and the condition $\bigoplus_{p\in \mathbb Z} \mathcal E_p\stackrel{\sim}{\to} \mathcal E$ means that we have induced maps as in \eqref{E:thetaphiggs}, which in turn, for each $p$, induce a complex 
$$
\xymatrix{
0 \ar[r]& \mathcal E_p \ar[r]^<>(0.5){\theta_p}& \mathcal E_{p+1} \otimes \Omega^1_{\mathcal X}(\log \mathcal D) \ar[r]^<>(0.5){\theta_{p+1}\wedge 1} 
&  \mathcal E_{p+2} \otimes\bigwedge^2 \Omega^1_{\mathcal X}(\log \mathcal D) \ar[r] & \cdots.
}
$$

We assume further that the Higgs bundle extends a polarizable variation of Hodge structure of weight $\ell$ on $\mathcal X-D$, meaning 
 that there is an \'etale presentation $P:U\to \mathcal X$ so that denoting by $D'\subseteq U$ the pre-image of $\mathcal D$ under $P$, we have that the graded Higgs bundle $(P^*\mathcal E,P^*\theta)$ on $U$ extends a polarizable variation of Hodge structure of weight $\ell$ on $U-D'$.
Using Tate twists, we will assume that
\begin{equation}\label{E:EE0min}
\mathcal E_p\ne  0 \ \ \ \text{ if and only if } \ \ \ 0\le p\le \ell.  
\end{equation}

Parallel to what was done for Hodge modules, we denote by
\begin{equation}\label{E:KkE}
\mathcal K_p(\mathcal E_\bullet):=\ker \left(\theta_p: \mathcal E_p\to  \mathcal E_{p+1} \otimes \Omega^1_{\mathcal X} (\log \mathcal D) \right).
\end{equation}
Their duals also have a positivity property similar to \Cref{T:PW-TA}, which generalizes a result of \cite{PW16} to DM stacks:

\begin{teo}
\label{T:PW-4.8} Let $\mathcal X$ be a smooth proper DM stack over $\mathbb C$ with projective coarse moduli space, let $\mathcal D$ be an snc divisor on $\mathcal X$, and 
let $(\mathcal E_\bullet, \theta_\bullet)$ be a graded logarithmic Higgs bundle on $\mathcal X$ that 
extends a polarizable variation of Hodge structure of weight $\ell$ on $\mathcal X-\mathcal D$. Then the torsion-free sheaf $\mathcal K_p(\mathcal E_\bullet)^\vee$ is  weakly positive for any $p$. 
\end{teo}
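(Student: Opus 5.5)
The plan is to pull back along a suitable finite flat cover and reduce to the variety case, i.e. the statement for graded logarithmic Higgs bundles extending a VHS contained in the proof of \cite[Thm.~4.8]{PW16}. The finite flat cover $q:V\to\mathcal X$ from a smooth projective variety must be chosen so that $q^*\mathcal D$ is again snc. Then $q^*\mathcal E_\bullet$ with $q^*\theta$, composed with $q^*\Omega^1_{\mathcal X}(\log\mathcal D)\to\Omega^1_V(\log q^*\mathcal D)$, is a graded logarithmic Higgs bundle on $(V,q^*\mathcal D)$. On the open set $V-q^{-1}\mathcal D$ it underlies the pull back of the polarizable VHS; pull back of a VHS along any morphism is again a VHS.

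\textbf{Constructing the cover.} First try to obtain $q$ with $q^{-1}(\mathcal D)$ snc directly. The Kresch--Vistoli construction via \Cref{P:bertini} produces $V$ as an iterated general hyperplane section of $Q\subseteq\mathcal P$, with $f:\mathcal P\to\mathcal X$ smooth. The divisor $f^{-1}\mathcal D$ is snc, and Bertini for general hyperplane sections preserves transversality to each stratum of $f^{-1}\mathcal D$, so $q^{-1}\mathcal D$ should be snc. If this is not clean, fall back on a weaker route. Take any finite flat $q$ and a log resolution $\mu:V'\to V$ of $(V,q^{-1}\mathcal D)$, so that $q'=q\circ\mu$ is surjective and generically finite. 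Then work with the pulled-back logarithmic Higgs bundle on $(V',(q')^{-1}\mathcal D)$. This is again a graded log Higgs bundle extending the VHS, with the log poles understood via $q'^*\Omega^1_{\mathcal X}(\log\mathcal D)\to\Omega^1_{V'}(\log (q')^{-1}\mathcal D)$. Use \Cref{L:WPchar}\ref{L:WPchar_allV'}, which allows arbitrary surjective generically finite covers. One caveat: the Popa--Wu result needs the extension to be the one coming from the VHS, e.g. the Deligne-type extension with the right residue condition. The pullback of a log Higgs bundle extending a VHS still extends the pulled-back VHS in the sense used in \cite{PW16} (following \cite{Zuo00negativity,brunebarbe18}), since their argument only needs a log Higgs bundle whose restriction to the open part is the VHS, with the metric having at most logarithmic growth.

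\textbf{Comparing kernels.} As in the proof of \Cref{T:PW-TA}, form the commutative diagram with exact rows. Its top row is
\[
0\to\mathcal K_p(q'^*\mathcal E_\bullet)\to q'^*\mathcal E_p\to q'^*\mathcal E_{p+1}\otimes\Omega^1_{V'}(\log D').
\]
Its bottom row is the pullback of $0\to\mathcal K_p(\mathcal E_\bullet)\to\mathcal E_p\to\mathcal E_{p+1}\otimes\Omega^1_{\mathcal X}(\log\mathcal D)$; this is exact where $q'$ is flat, and in particular generically. The right vertical map is generically injective, since it is an isomorphism where $q'$ is étale. Hence $q'^*\mathcal K_p(\mathcal E_\bullet)$ modulo torsion maps generically isomorphically into $\mathcal K_p(q'^*\mathcal E_\bullet)$, where $D':=(q')^{-1}\mathcal D$. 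Dualizing gives a generically surjective map $\mathcal K_p(q'^*\mathcal E_\bullet)^\vee\to (q'^*\mathcal K_p(\mathcal E_\bullet))^\vee$. The variety result gives weak positivity of the source, so \Cref{L:WP-prpty}\ref{L:WP-prpty-surj} gives weak positivity of the target.

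\textbf{Descending.} If $q'=q$ is flat, $(q^*\mathcal K_p)^\vee=q^*(\mathcal K_p^\vee)$ and the definition applies directly. Otherwise, $(q'^*\mathcal K_p)^\vee$ and $q'^*(\mathcal K_p^\vee)$ agree on an open set where $\mathcal K_p$ is locally free, so weak positivity of $q'^*(\mathcal K_p^\vee)$ follows (\Cref{R:torsion}). Then \Cref{L:WPchar} gives weak positivity of $\mathcal K_p(\mathcal E_\bullet)^\vee$. I expect the main obstacle to be the first step: ensuring the pulled-back divisor is snc (or handling the resolution), and verifying that the pulled-back log Higgs bundle satisfies exactly the hypotheses of the Popa--Wu/Brunebarbe statement.
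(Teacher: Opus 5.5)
Your proposal is correct and follows essentially the same route as the paper. The paper also produces a finite flat cover $q:V\to\mathcal X$ with $q^*\mathcal D$ snc by applying the standard Bertini theorem to the strata $\mathcal D_1\cap\cdots\cap\mathcal D_r$ of $f^*\mathcal D$ inside $Q$ during the Kresch--Vistoli hyperplane-section induction. It then runs the kernel comparison, dualization and descent steps of \Cref{T:PW-TA}, with logarithmic forms in place of $\Omega^1$, and invokes the variety result of \cite{PW16}; your log-resolution fallback is not needed (and in the flat case $q^*\mathcal K_p(\mathcal E_\bullet)\to\mathcal K_p(q^*\mathcal E_\bullet)$ is even an isomorphism, since $q^*\mathcal E_{p+1}$ is locally free).
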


\begin{proof}
The proof in the case where $\mathcal X$ is a smooth projective variety is 
established in \cite{PW16} (see Thm.~4.8 and its proof).   One can deduce the theorem here for DM stacks from the Popa--Wu result by pulling back to a finite flat cover $q:V\to \mathcal X$, and following the steps in the proof of \Cref{T:PW-TA}, replacing the sheaf of differentials with the sheaf of log differentials.

For this argument, one needs to check that there exists a finite flat cover $q:V\to \mathcal X$ such that $q^*\mathcal D$ is snc.  The argument is similar to the proof of \Cref{T:nccover}, i.e., we modify the Bertini type proof of \cite[Thm.~2.1]{KV04}, establishing the existence of the finite flat cover $q:V\to \mathcal X$,  with some additional applications of the standard Bertini theorem.  To be precise, under our hypotheses, Kresch--Vistoli begin their proof by showing that there exists a smooth separated DM stack $\mathcal P$ of finite type over $\mathbb C$ with (quasi-)projective coarse moduli space $P$, and a schematic projective morphism $f:\mathcal P\to \mathcal X$ with an open subscheme $Q\subseteq \mathcal P$ satisfying conditions  conditions \ref{E:bertini-fib}--\ref{E:bertini-comp} of \Cref{P:bertini} (see \cite[Proof of Thm.~2.1, p.4]{KV04} and \cite[Lem.~2.12, \S 4.2]{EHKV01}).  
As $f:\mathcal P\to \mathcal X$ is smooth, we have the added condition that $f^*\mathcal D$ is snc.  For the inductive Bertini argument, we prefer to restate this as follows: for any $1\le r\le \dim \mathcal X$ and any irreducible components $\mathcal D_1,\dots \mathcal D_r$ of $f^*\mathcal D$, the stack-theoretic intersection $\mathcal D_1\cap \cdots \cap \mathcal D_r$ is of pure dimension $\dim \mathcal P-r$, and  smooth when restricted to the subvariety $Q\subseteq \mathcal P$.

After embedding $P\subseteq \mathbb P^N$, 
the fact that a general hyperplane section of $P$ satisfies the conditions \ref{E:bertini-fib}--\ref{E:bertini-comp} is the content of \cite[Lem.~3.1]{KV04} as it is applied in the proof of \cite[Thm.~2.1]{KV04}.   Consequently, we can focus our attention on the condition on the components of $f^*\mathcal D$ inside of $Q$; for the argument, we will be considering $Q\subseteq P$.    Let $H_\ell$ be a general hyperplane in $\mathbb P^N$ and let $P_\ell= P\cap H_\ell$ be the corresponding hyperplane section. 
We will denote by $\mathcal P_\ell:=P_\ell\times _P \mathcal P$, with coarse moduli space $P_\ell$, and set $Q_\ell:=Q\cap H_\ell\subseteq P_\ell$.  
Applying the standard Bertini theorem to the irreducible components of $f^*\mathcal D$, we can assume that the restriction of any irreducible component $\mathcal D_i$ of $f^*\mathcal D$ to $P_\ell$ is smooth when restricted to $Q$.  
More generally, for any $1\le r\le \dim \mathcal X$ and any irreducible components $\mathcal D_1,\dots \mathcal D_r$ of $f^*\mathcal D$, applying the standard Bertini theorem to the intersection $\mathcal D_1\cap \cdots \cap \mathcal D_r$ inside of $Q$, one has that the restriction of $\mathcal D_1\cap \cdots \cap \mathcal D_r$ to $Q_\ell$ is of pure dimension $\dim \mathcal P-r-1$, and  smooth.   Inductively, we obtain the finite flat cover $q:V\to \mathcal X$ such that $q^*\mathcal D$ is snc.  
\end{proof}

We also
consider a graded submodule $\mathcal F_\bullet \subseteq \mathcal E_\bullet$ having the property that 
\begin{equation}\label{E:FsubEprop}
\theta_p(\mathcal F_p)\subseteq \mathcal F_{p+1}\otimes \Omega^1_{\mathcal X}(\log \mathcal B)
\end{equation}
for some divisor $\mathcal B\subseteq \mathcal D$.
Note that since the $\mathcal E_p$ are vector bundles, the sheaves $\mathcal F_p$ are
automatically torsion-free. As with logarithmic graded Higgs bundles, we define 
$$
{\mathcal K}_p(\mathcal F_\bullet):=\ker \left(\theta_p: \mathcal F_p\longrightarrow \mathcal F_{p+1}\otimes \Omega^1_{\mathcal X}(\log \mathcal B) \right).
$$

Similar to  \Cref{D:large-graded-sub}, and following \cite[\S 3.3]{PS17}, we make the following definition:

\begin{dfn}[Large graded submodule of a Higgs bundle]\label{D:large-sub-Higgs}
A graded submodule $\mathcal F_\bullet \subseteq \mathcal E_\bullet$ of a graded logarithmic Higgs bundle $\mathcal E_\bullet$ 
satisfying \eqref{E:FsubEprop} for some divisor  $\mathcal B\subseteq \mathcal D$ 
is \emph{large} if there exists a big line bundle $\mathcal A$ such that $\mathcal A\subseteq \mathcal F_0$.
\end{dfn}

In this situation, we will say that $\mathcal E_\bullet$ admits a large graded submodule $\mathcal F_\bullet$ for the divisor $\mathcal B\subseteq \mathcal D$.  
The next theorem  generalizes  \cite[Thm.~3.7]{PS17} to the case of DM stacks, establishing the existence of Viehweg--Zuo sheaves:

\begin{teo}
\label{T:PS-3.7/19.1}
Let $\mathcal X$ be a smooth proper DM stack over $\mathbb C$ with projective coarse moduli space, let $\mathcal D$ be an snc divisor on $\mathcal X$, and 
let $(\mathcal E_\bullet, \theta_\bullet)$ be a graded logarithmic Higgs bundle on $\mathcal X$ that 
extends a polarizable variation of Hodge structure of weight $\ell$ on $\mathcal X-\mathcal D$. 
If $\mathcal E_\bullet$ admits a large graded submodule for some  divisor $\mathcal B\subseteq \mathcal D$ 
(\Cref{D:large-sub-Higgs}), 
then there exist a  big coherent sheaf $\mathcal H$ on $\mathcal X$
and an integer  $s$ with  $1\le s\le \ell$,  together with an inclusion
$$
\xymatrix{\mathcal H \ar@{^(->}[r] & \left( \Omega^1_{\mathcal X}(\log \mathcal B)\right)^{\otimes s}.
}
$$
\end{teo}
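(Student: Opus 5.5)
The plan is to follow the argument of \cite[Thm.~3.7]{PS17}, exactly as was done for \Cref{T:PS-3.5/18.4}, with \Cref{T:PW-4.8} as the positivity input. Let $\mathcal F_\bullet\subseteq \mathcal E_\bullet$ be the large graded submodule for $\mathcal B\subseteq\mathcal D$, with a big line bundle $\mathcal A\hookrightarrow \mathcal F_0$. By \eqref{E:FsubEprop} the Higgs field gives a chain of $\mathcal O_{\mathcal X}$-linear maps
$$
\mathcal F_0\xrightarrow{\theta_0}\mathcal F_1\otimes\Omega^1_{\mathcal X}(\log\mathcal B)\xrightarrow{\theta_1\otimes 1}\mathcal F_2\otimes(\Omega^1_{\mathcal X}(\log\mathcal B))^{\otimes 2}\to\cdots .
$$
Each $\mathcal F_p$ is torsion-free, being a subsheaf of a vector bundle. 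Each $\Omega^1_{\mathcal X}(\log\mathcal B)$ is locally free. Hence every term in the chain is torsion-free, and there are no torsion subtleties here, in contrast with \Cref{T:PS-3.5/18.4}.

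The first step is positivity of the kernels. We have $\mathcal K_p(\mathcal F_\bullet)\subseteq \mathcal K_p(\mathcal E_\bullet)$, using that $\Omega^1_{\mathcal X}(\log\mathcal B)\subseteq\Omega^1_{\mathcal X}(\log\mathcal D)$ and that tensoring with the locally free sheaf $\mathcal E_{p+1}$ preserves this injection. Dualizing gives a generically surjective map $\mathcal K_p(\mathcal E_\bullet)^\vee\to\mathcal K_p(\mathcal F_\bullet)^\vee$. The map is generically surjective because the inclusion of kernels is a split injection of vector bundles on a dense open substack. By \Cref{T:PW-4.8} and \Cref{L:surjWP}, each $\mathcal K_p(\mathcal F_\bullet)^\vee$ is therefore weakly positive.

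The second step is to follow $\mathcal A$ along the chain. Let $s\ge 0$ be the largest integer such that the composite $\mathcal A\to \mathcal F_s\otimes(\Omega^1_{\mathcal X}(\log\mathcal B))^{\otimes s}$ is nonzero, hence injective, since its source is a line bundle and its target is torsion-free. Such an $s$ exists with $s\le\ell$, because $\mathcal E_p=0$ for $p>\ell$ by \eqref{E:EE0min}. Then $\mathcal A\subseteq \mathcal K_s(\mathcal F_\bullet)\otimes(\Omega^1_{\mathcal X}(\log\mathcal B))^{\otimes s}$.

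The main obstacle is excluding $s=0$. If $s=0$, then $\mathcal A\hookrightarrow\mathcal K_0(\mathcal F_\bullet)$, and dualizing gives a nonzero map $\mathcal K_0^\vee\to\mathcal A^{-1}$. This forces $\mathcal A^{-1}$ to be pseudoeffective, by \Cref{L:surjWP} together with \Cref{R:WPosLB}. But $\mathcal A$ is big, so by \Cref{L:big+eff} the trivial bundle $\mathcal A\otimes\mathcal A^{-1}$ would be big, which is absurd since $\dim\mathcal X>0$. (If $\dim \mathcal X=0$ the statement is vacuous or degenerate and I would dispose of it separately.) Hence $1\le s\le\ell$.

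The last step produces $\mathcal H$. The inclusion of the second step yields a nonzero morphism
$$
\mathcal K_s(\mathcal F_\bullet)^\vee\otimes\mathcal A\longrightarrow (\Omega^1_{\mathcal X}(\log\mathcal B))^{\otimes s},
$$
obtained by using local freeness of the log differentials to move the dual across the tensor product. By \Cref{L:pbig-prpty}\ref{L:pbig-prpty-wpb}, the source is big. Let $\mathcal H$ be the image of this morphism. It is torsion-free, as a subsheaf of a locally free sheaf, and it is big by \Cref{L:pbig-prpty}\ref{L:pbig-prpty-surj}. This gives the desired inclusion $\mathcal H\hookrightarrow(\Omega^1_{\mathcal X}(\log\mathcal B))^{\otimes s}$. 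Finally, \Cref{L:pbig-prpty}\ref{L:pbig-prpty-det} gives that $\det\mathcal H$ is big, which is the form of the statement recorded in \Cref{TA:PS-3.7/19.1}.
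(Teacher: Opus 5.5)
Your proposal is correct and follows the paper's proof essentially step for step. You use the chain $\mathcal F_p\otimes(\Omega^1_{\mathcal X}(\log\mathcal B))^{\otimes p}$, get weak positivity of $\mathcal K_s(\mathcal F_\bullet)^\vee$ from a generically surjective map out of $\mathcal K_s(\mathcal E_\bullet)^\vee$ together with \Cref{T:PW-4.8}, and then produce $\mathcal H$ via \Cref{L:pbig-prpty}. The one addition is that you spell out the exclusion of $s=0$ (pseudoeffectivity of $\mathcal A^{-1}$ plus \Cref{L:big+eff}), which the paper leaves implicit when it asserts $1\le s\le\ell$. One small correction: when $\dim\mathcal X=0$ the statement is not vacuous but false (e.g.\ over $\mathcal X=\operatorname{Spec}\mathbb C$ the trivial bundle is big while $\Omega^1_{\mathcal X}=0$), so that case must simply be excluded, as the paper tacitly does.
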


\begin{proof}
The proof is essentially the same as that of \Cref{T:PS-3.5/18.4},
replacing the sequence of coherent $\mathcal O_{\mathcal X}$-module homomorphisms in \eqref{E:GG_pChain}  with the sequence of coherent $\mathcal O_{\mathcal X}$-module homomorphisms  
\begin{equation}\label{E:FF_pChain}
\xymatrix@C=1.65em{
0 \ar[r]& \mathcal F_0 \ar[r]^<>(0.5){\theta_0}& \mathcal F_{1} \otimes \Omega^1_{\mathcal X}(\log \mathcal B) \ar[r]^<>(0.5){\theta_{1}\otimes 1} 
&  \mathcal F_{2} \otimes (\Omega^1_{\mathcal X}(\log \mathcal B))^{\otimes 2}\ar[r]^<>(0.5){\theta_2\otimes 1} & \cdots \ar[r]^<>(0.5){\theta_{\ell-1}\otimes 1}& \mathcal F_\ell \otimes (\Omega^1_{\mathcal X}(\log \mathcal B))^{\otimes \ell}\ar[r]^<>(0.5){\theta_\ell \otimes 1}& 0.
}
\end{equation}
Indeed, considering that all of the sheaves above are torsion-free, that $\theta_\ell \otimes 1$ is the zero map, and that there is a big line bundle $\mathcal A$ contained in $\mathcal F_0$, there must be some $s$ with $1\le s\le \ell$ so that $$\mathcal A\subseteq \mathcal K_s(\mathcal F_\bullet)\otimes (\Omega^1_{\mathcal X}(\log \mathcal B))^{\otimes s}.$$
We conclude that there exists a non-trivial homomorphism
$$
\mathcal K_{s}(\mathcal F_\bullet)^\vee \otimes \mathcal A\longrightarrow (\Omega^1_{\mathcal X}(\log \mathcal B))^{\otimes s}
$$
and, using the  weak positivity of $\mathcal K_s(\mathcal F_\bullet)^\vee$ (this sheaf admits a generically surjective morphism from $\mathcal K_s(\mathcal E_\bullet)^\vee$, which is weakly positive by \Cref{T:PW-4.8}),  together with \Cref{L:pbig-prpty}\ref{L:pbig-prpty-wpb} and \ref{L:pbig-prpty-surj}, 
 taking the  image of the morphism above, 
we obtain an inclusion
$$
\xymatrix{\mathcal H \ar@{^(->}[r] & \left( \Omega^1_{\mathcal X}(\log \mathcal B)\right)^{\otimes s}
}
$$
with $\mathcal H$ a  big sheaf on $\mathcal X$. 
\end{proof}

\begin{rem}\label{R:PThmB}
We note that \Cref{TA:PS-3.7/19.1} is a direct consequence of \Cref{T:PS-3.7/19.1} and \Cref{L:pbig-prpty}\ref{L:pbig-prpty-det}. 
\end{rem}


\ifArxiv

\appendix

\section{Strict positivity on stacks}

For various reasons, in our discussion of positivity on stacks, we prefer to avoid working with global generation conditions directly on stacks.  For completeness, however, we introduce in this appendix some related notions of positivity on stacks, which   we will call ``strict'' positivity, that use the global generation condition.

\subsection{Strict weak positivity}

Motivated by the definition for varieties, we say that a quasi-coherent sheaf $\mathcal F$ on $\mathcal X$ is \emph{generated by global sections at each point of a Zariski open substack $\mathcal U\subseteq \mathcal X$} if the canonical morphism of sheaves
\begin{equation}
H^0(\mathcal X,\mathcal F)\otimes_{\mathbb C}\mathcal O_{\mathcal X}\to \mathcal F
\end{equation}
is surjective when restricted to $\mathcal U$.  Following the definition for varieties:

\begin{dfn}[Strictly weakly positive sheaves]
Let $\mathcal F$ be a torsion-free coherent sheaf on $\mathcal X$.  We say that $\mathcal F$ is \emph{strictly weakly positive over a Zariski open substack $\mathcal U\subseteq \mathcal X$}   if for every integer $\alpha > 0$ and every line bundle $\mathcal A$ on $\mathcal X$ that has a positive tensor power that descends to an ample line bundle on $X$, there is an integer $\beta >0$ 
such that
$$
(\operatorname{Sym}^{\alpha \beta} \mathcal F)^{\vee \vee}\otimes \mathcal A^{\beta} 
$$
is generated by global sections at each point of  $\mathcal U$. We say that $\mathcal F$ is \emph{strictly weakly positive} if such an open substack $\mathcal U \ne \emptyset$ exists.
\end{dfn}

\begin{lem}\label{L:WPpullback}
For any surjective morphism $q':V'\to \mathcal X$ from a smooth projective variety $V'$, if $\mathcal F$ is strictly weakly positive over $\mathcal U\subseteq X$ then $q'^*\mathcal F$ is weakly positive over $q'^{-1}(\mathcal U)$.  In particular, if $\mathcal F$ is strictly weakly positive, then it is weakly positive. 
\end{lem}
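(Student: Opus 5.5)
The plan is to pull back the global sections that witness strict weak positivity on $\mathcal X$ to $V'$, obtaining global generation of the corresponding sheaves on $V'$ over $q'^{-1}(\mathcal U)$. Then compare with the definition of weak positivity on $V'$, which uses ample line bundles on $V'$ rather than pullbacks of line bundles from $\mathcal X$.

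First reduce to the locally free case. Shrinking $\mathcal U$, assume $\mathcal F$ is locally free on $\mathcal U$. Then $q'^*\mathcal F$ is locally free on $q'^{-1}(\mathcal U)$, and its torsion-free quotient agrees with it there, as \Cref{R:torsion} requires. On $q'^{-1}(\mathcal U)$ we have $q'^*\big((\operatorname{Sym}^{\nu}\mathcal F)^{\vee\vee}\big)=\operatorname{Sym}^\nu q'^*\mathcal F$. Hence there is a natural map
$$q'^*\big((\operatorname{Sym}^{\nu}\mathcal F)^{\vee\vee}\big)\to (\operatorname{Sym}^\nu (q'^*\mathcal F/\mathrm{tors}))^{\vee\vee},$$
which is an isomorphism over $q'^{-1}(\mathcal U)$.

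Next fix $\alpha>0$ and choose a line bundle $\mathcal A$ on $\mathcal X$ with $\mathcal A^{e}\cong\pi^*M$ for some ample $M$ on $X$. Strict weak positivity gives $\beta$ such that $(\operatorname{Sym}^{\alpha\beta}\mathcal F)^{\vee\vee}\otimes\mathcal A^\beta$ is globally generated on $\mathcal U$. Pulling back sections, $\operatorname{Sym}^{\alpha\beta}(q'^*\mathcal F)\otimes q'^*\mathcal A^\beta$ is globally generated on $q'^{-1}(\mathcal U)$.

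The main obstacle is that $q'^*\mathcal A$ need not be ample on $V'$, since $q'$ is merely surjective, not finite. I would not try to prove ampleness directly. Instead I would use Viehweg's characterization: a sheaf $G$ on $V'$ is weakly positive over $W$ provided that, for some single line bundle $H$ (not necessarily ample) and every $\alpha$, there is $\beta$ with $\operatorname{Sym}^{\alpha\beta}G\otimes H^{\beta}$ globally generated over $W$. Here one can take $H=q'^*\mathcal A$ (replacing $\mathcal A$ by a power if needed). Given an arbitrary ample $A'$ on $V'$, choose $m$ with $A'^{m}\otimes q'^*\mathcal A^{-1}$ globally generated. Then for $\alpha$ replaced by $m\alpha$, $\operatorname{Sym}^{m\alpha\beta}G\otimes q'^*\mathcal A^{\beta}$ is globally generated, and tensoring with the globally generated $(A'^m\otimes q'^*\mathcal A^{-1})^{\beta}$ shows that $\operatorname{Sym}^{m\alpha\beta}G\otimes A'^{m\beta}$ is globally generated over $q'^{-1}(\mathcal U)$. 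This is exactly the definition with exponent $\beta'=m\beta$ and the given $\alpha$.

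For the final claim, take $q'=q$ a finite flat cover (\S\ref{S:DM-intro}). Then $q^*\mathcal F$ is weakly positive over $q^{-1}(\mathcal U)$, which is \Cref{D:PotWP}.
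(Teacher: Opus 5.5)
There is a genuine gap, at the sentence ``Hence there is a natural map $q'^*\big((\operatorname{Sym}^{\nu}\mathcal F)^{\vee\vee}\big)\to (\operatorname{Sym}^\nu (q'^*\mathcal F/\mathrm{tors}))^{\vee\vee}$''.

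The two sheaves agree over $q'^{-1}(\mathcal U)$, but that does not produce a morphism on all of $V'$. You need such a morphism to turn the pulled-back sections into global sections of the sheaf in the definition of weak positivity on $V'$.

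The map does exist when the locus $\mathcal Z\subseteq \mathcal X$ where $\mathcal F$ is not locally free satisfies $\operatorname{codim}_{V'} q'^{-1}(\mathcal Z)\ge 2$. In that case the target is the push-forward of $\operatorname{Sym}^\nu q'^*\mathcal F$ from $V'-q'^{-1}(\mathcal Z)$, and adjunction gives the map. This covers, e.g., $q'$ finite or $\mathcal F$ locally free. Shrinking $\mathcal U$ does not help, because what matters is $\mathcal Z$, not $\mathcal X-\mathcal U$.

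In general the map does not exist, and in fact the statement itself fails. Take $\mathcal X=\mathbb P^2$, $\mathcal F=\mathfrak m_p$, and $q'\colon V'\to \mathbb P^2$ the blow-up of $p$ with exceptional divisor $E$.
\begin{itemize}
\item Since $(\operatorname{Sym}^n\mathcal F)^{\vee\vee}=\mathcal O_{\mathbb P^2}$ for all $n$, $\mathcal F$ is strictly weakly positive over $\mathcal U=\mathbb P^2-\{p\}$.
\item But $q'^*\mathcal F/\mathrm{tors}\cong \mathcal O_{V'}(-E)$, and there is no nonzero map $\mathcal O_{V'}\to \mathcal O_{V'}(-\nu E)$.
\item Moreover $-E$ is not pseudo-effective: with $H$ the pull-back of a line, $H-E$ is nef and $(-E)\cdot(H-E)=-1$.
\end{itemize}
So by \Cref{R:torsion} and \Cref{R:WPosLB}, $q'^*\mathcal F$ is not weakly positive over any non-empty open subset of $V'$.

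The gap therefore cannot be closed without an extra hypothesis, such as $q'$ finite or, more generally, $\operatorname{codim}_{V'}q'^{-1}(\mathcal Z)\ge 2$. The paper's proof carries the same implicit restriction. It passes to the locally free locus using that its complement has codimension at least two, which need not survive pull-back along a non-finite $q'$. It also asserts that $q'^*\mathcal A$ is ample, which is false as soon as $q'$ has positive-dimensional fibers.

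For finite $q'$ your argument is correct and essentially the paper's. That is all the ``in particular'' clause needs, since there you use the finite flat cover $q$.

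Your device of choosing $m$ with $A'^{m}\otimes q'^*\mathcal A^{-1}$ globally generated is correct and improves on the paper. It removes the reliance on ampleness of $q'^*\mathcal A$. Combined with the codimension hypothesis above (for instance when $\mathcal F$ is locally free), your argument does prove the lemma for arbitrary surjective $q'$.
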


\begin{proof} 
The proof is the same  as \cite[3.4 a)]{Vhilb1}. 
The first observation is \cite[Rem.~(1.3) ii)]{Vkod1}, that it suffices to check the condition of weak positivity of $q'^*\mathcal F$ for a single ample line bundle on $V'$.  
The next key point is \cite[3.3 d)]{Vhilb1}, that we can work on the open locus where $\mathcal F$ is locally free, which has complement of codimension at least $2$.  
In short, this implies that taking symmetric powers and then double duals commutes with pull back  , etc. 
 
 So let us assume, for a fixed  $\mathcal A$ on $\mathcal X$ that has some positive tensor power that descends to an ample line bundle on $X$, and a given $\alpha>0$, that $(\operatorname{Sym}^{\alpha\beta}\mathcal F)^{\vee\vee}\otimes \mathcal A^\beta$ is generated by global sections over $\mathcal U\subseteq \mathcal X$ for some integer $\beta >0$.  
We then have a morphism
$$
\xymatrix{
\bigoplus q'^*\mathcal O_{\mathcal X}\ar[r]  &  q'^*((\operatorname{Sym}^{\alpha\beta}\mathcal F)^{\vee\vee}\otimes \mathcal A^\beta)
}
$$
The map above is surjective over $q'^{-1}(\mathcal U)$, since pull back   is right exact.   By virtue of the fact that $q'^*\mathcal A$ is ample, and $q'^*\mathcal O_{\mathcal X}=\mathcal O_V$, we have  that $q'^*\mathcal F$ is weakly positive over $q'^{-1}(\mathcal U)$.  
\end{proof}

\begin{lem}[Strictly weakly positive versus weakly positive]\label{R:PWP=>WP}
If $\mathcal X$ has generically trivial stabilizers, then 
 $\mathcal F$ is strictly weakly positive if and only if $\mathcal F$ is  weakly positive.  
 \end{lem}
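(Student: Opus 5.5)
One direction is immediate: strict weak positivity implies weak positivity. By \Cref{L:WPpullback}, if $\mathcal F$ is strictly weakly positive over some nonempty $\mathcal U$, then for a finite flat cover $q:V\to\mathcal X$ by a smooth projective variety, $q^*\mathcal F$ is weakly positive over $q^{-1}(\mathcal U)$. This is exactly \Cref{D:PotWP}, and it uses no hypothesis on stabilizers. The content is the converse, and the plan there is to push the global generation statement down from $V$ to $\mathcal X$ using the trace map. The generically trivial stabilizer hypothesis enters as follows: over a dense open $\mathcal X^0\subseteq \mathcal X$, the map $\pi:\mathcal X\to X$ is an isomorphism onto a smooth open $X^0\subseteq X$. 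Shrinking, we may take $\mathcal U\subseteq \mathcal X^0$ with $\mathcal F$ locally free on $\mathcal U$.

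For the converse, assume $\mathcal F$ is weakly positive. By \Cref{L:WPchar}, for any finite flat $q:V\to\mathcal X$, the sheaf $q^*\mathcal F$ is weakly positive over some nonempty $q^{-1}(\mathcal U)$. Fix $\alpha>0$ and a line bundle $\mathcal A$ with $\mathcal A^{e}=\pi^*A_X$, where $A_X$ is ample. First I reduce to one ample bundle: as in \cite[Rem.~(1.3) ii)]{Vkod1}, it suffices to get global generation of $(\operatorname{Sym}^{\alpha\beta}\mathcal F)^{\vee\vee}\otimes\mathcal A^{\beta}$ for $\beta$ in a suitable arithmetic progression, after replacing $\alpha$ by a multiple. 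Put $\mathcal G_\beta:=(\operatorname{Sym}^{\alpha\beta}\mathcal F)^{\vee\vee}\otimes\mathcal A^{\beta}$. Then $q^*\mathcal G_\beta=(\operatorname{Sym}^{\alpha\beta}q^*\mathcal F)^{\vee\vee}\otimes q^*\mathcal A^{\beta}$, computed on the codimension-two complement where $\mathcal F$ is locally free. Since $q^*\mathcal A$ is ample on $V$ (\Cref{L:ample}), weak positivity on $V$ gives some $\beta$ such that $q^*\mathcal G_\beta\otimes(\text{fixed twist})$ is globally generated over $q^{-1}(\mathcal U)$. I will absorb the fixed twist by applying weak positivity with $2\alpha$ in place of $\alpha$, and with an ample bundle equal to a suitable root of $q^*\mathcal A$.

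Next I descend sections. Since $q$ is finite flat, there is a trace map $\operatorname{tr}:q_*\mathcal O_V\to\mathcal O_{\mathcal X}$, and by the projection formula $q_*q^*\mathcal G=\mathcal G\otimes q_*\mathcal O_V$. This yields a map $\operatorname{tr}:H^0(V,q^*\mathcal G)\to H^0(\mathcal X,\mathcal G)$ and the corresponding map of sheaves $q_*q^*\mathcal G\to\mathcal G$. The latter is surjective because $\frac1{\deg q}\operatorname{tr}$ splits $\mathcal O_{\mathcal X}\to q_*\mathcal O_V$. Now take a point $x\in\mathcal U$. Because $\mathcal U$ is a scheme, the fiber $q^{-1}(x)$ is a finite set of points, and a section of $\mathcal G$ near $x$ pulls back to a section near the fiber. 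I need global sections of $q^*\mathcal G$ that generate it simultaneously at all points of $q^{-1}(x)$ and that locally approximate this pulled-back section. Taking their traces then shows that the image of $H^0(\mathcal X,\mathcal G)\otimes\mathcal O\to\mathcal G$ contains the germ at $x$, since the trace of a pulled-back section is $\deg q$ times that section.

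The main obstacle is that generation of the stalks of $q^*\mathcal G$ at each point of $q^{-1}(x)$ does not immediately give surjectivity onto $(q_*q^*\mathcal G)_x$. That module is the stalk over a semilocal ring, so I need simultaneous generation over the fiber, which is a Nakayama argument at all points of $q^{-1}(x)$ together. I would handle this by using global generation of $q^*\mathcal G$ over the whole of $q^{-1}(\mathcal U)$. Indeed, $H^0\otimes\mathcal O_V\to q^*\mathcal G$ is surjective on $q^{-1}(\mathcal U)$, and pushing forward by the finite, hence affine and exact, map $q$ keeps it surjective. This gives $H^0(V,q^*\mathcal G)\otimes q_*\mathcal O_V\to q_*q^*\mathcal G$ surjective over $\mathcal U$. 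It remains to pass from this surjection to the needed one without the extra $q_*\mathcal O_V$ factor, where the extra factor must be absorbed by a twist. For this I would first twist by a multiple of $\mathcal A$ that makes $q_*\mathcal O_V\otimes\mathcal A^{m}$ globally generated over $\mathcal U\cong U\subseteq X^0$, arranged via pushforward to $X$ and Serre's theorem there. I would then compose with the trace to $\mathcal G$, again absorbing the twist by adjusting $\alpha$ and $\beta$ as above.
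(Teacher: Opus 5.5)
Your proposal is correct and follows essentially the same route as the paper: you push the surjection $H^0\otimes\mathcal O_V\to q^*\mathcal G$ forward along the finite map $q$, twist by $\pi^*A^{\eta}$ so that $q_*\mathcal O_V\otimes\pi^*A^{\eta}$ is globally generated over the trivial-stabilizer locus (Serre's theorem on $X$, plus $\pi$ being an isomorphism there), and then compose with the split trace $q_*\mathcal O_V\to\mathcal O_{\mathcal X}$. Your explicit absorption of the extra twist fills in a step the paper leaves implicit; for it you need no root of $q^*\mathcal A$, since applying weak positivity on $V$ with $2\alpha$ and $q^*\mathcal A$ itself, and then taking $\beta$ large and suitably divisible, already frees a power of $\mathcal A$ to absorb $\pi^*A^{\eta}$.
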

 
\begin{proof}
One direction follows from \Cref{L:WPpullback}.  
Let us now show that if $\mathcal F$ is weakly positive over $\mathcal U$ then it is strictly weakly positive over some open substack $\mathcal U'\subseteq \mathcal U$.  
The proof is essentially identical to the proof of \cite[3.4 b)]{Vhilb1}.  
As $q:V\to \mathcal X$ is finite and flat, the trace homomorphism $q_*\mathcal O_V\to \mathcal O_{\mathcal X}$ splits the natural inclusion
$\mathcal O_{\mathcal X}\to q_*\mathcal O_V$ (e.g., \cite[p.53]{Vmoduli}).
 Let $A$ be an ample line bundle on $X$, and take $\eta \gg 0$ so that there is a surjection $\bigoplus \mathcal O_X\twoheadrightarrow \pi_*q_*\mathcal O_V \otimes A^\eta$.  We then apply $\pi^*$ and consider the composition
\begin{equation}\label{L:WPchar-pfeq1}
\bigoplus \mathcal O_{\mathcal X}\twoheadrightarrow  \pi^*\pi_*q_*\mathcal O_V\otimes \pi^*A^\eta \to q_*\mathcal O_V \otimes \pi^*A^\eta, 
\end{equation}
where the second morphism is the canonical morphism from adjunction tensored by $\pi^*A^\eta$.  The first morphisms is a surjection, since $\pi^*$ is right exact. The second morphism is generically an isomorphism, since  $\pi^*\pi_*\mathcal O_{\mathcal X}\to \mathcal O_{\mathcal X}$ is an isomorphism, and over the  open substack $\mathcal U'\subseteq \mathcal U$ where $\mathcal X$ has trivial stabilizers,  $q_*\mathcal O_V$ is locally free in the Zariski topology ($q$ is finite and flat and $\pi\circ q$ and $q$ agree on the locus where there are trivial stabilizers).

Now suppose that there is a morphism
$$
\bigoplus \mathcal O_{V}\to (\operatorname{Sym}^{\alpha\beta}q^*\mathcal F)^{\vee\vee}\otimes q^* \pi^*A^\beta
$$
that is a surjection over an open subset $q^{-1}(\mathcal U)$.  Applying $q_*$ and tensoring by $\pi^*A^\eta$, we obtain 
$$
\bigoplus q_*\mathcal O_{V} \otimes \pi^*A^\eta \to (\operatorname{Sym}^{\alpha\beta}\mathcal F)^{\vee\vee}\otimes  \pi^*A^{\beta +\eta} \otimes q_*\mathcal O_V \to (\operatorname{Sym}^{\alpha\beta}\mathcal F)^{\vee\vee}\otimes  \pi^*A^{\beta +\eta},
$$
where for the second homomorphism we are using the trace $q_*\mathcal O_V\to \mathcal O_{\mathcal X}$.  The first homomorphism remains surjective over $\mathcal U$ since $R^1q_*=0$ on coherent sheaves, as $q$ is finite.  The second morphism is surjective since the trace is.  Finally, we conclude using the generic surjection \eqref{L:WPchar-pfeq1}.
\end{proof}

\subsection{Strict bigness}

We can define strictly big sheaves on stacks by directly generalizing the definition of big sheaves on varieties in the following way:

\begin{dfn}[Strictly big sheaves]
A torsion-free coherent sheaf $\mathcal F$ on $\mathcal X$ is \emph{strictly big}
if there exists a line bundle $\mathcal A$ on $\mathcal X$ such that some positive tensor power descends to an ample line bundle on $X$, a natural number $\nu$, and an inclusion
\begin{equation}\label{E:bigF-defSt}
\bigoplus ^{\operatorname{rank}\operatorname{Sym}^\nu F}\mathcal A \hookrightarrow (\operatorname{Sym}^\nu \mathcal F)^{\vee \vee}.
\end{equation}
 
\end{dfn}

\begin{lem}
If $\mathcal F$ is strictly big, then $\mathcal F$ is big.
\end{lem}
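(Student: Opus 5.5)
The plan is to pull the defining inclusion \eqref{E:bigF-defSt} back along a finite flat cover and check that it remains an inclusion of the required form. Fix any finite flat morphism $q:V\to \mathcal X$ from a smooth projective variety $V$; one exists by \S\ref{S:DM-intro}. By definition of bigness on stacks, it then suffices to show that $q^*\mathcal F$ is big on $V$.

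Apply $q^*$ to the inclusion $\bigoplus^{r}\mathcal A\hookrightarrow (\operatorname{Sym}^\nu\mathcal F)^{\vee\vee}$, where $r=\operatorname{rank}\operatorname{Sym}^\nu \mathcal F$. Since $q$ is flat, $q^*$ is exact, so we obtain an injection
$$\bigoplus^{r} q^*\mathcal A\hookrightarrow q^*\bigl((\operatorname{Sym}^\nu\mathcal F)^{\vee\vee}\bigr).$$
To identify the target, let $\mathcal U\subseteq\mathcal X$ be the locus where $\mathcal F$ is locally free. Its complement has codimension at least $2$, and because $q$ is finite the same holds for $q^{-1}(\mathcal U)\subseteq V$. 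On $q^{-1}(\mathcal U)$, pullback commutes with $\operatorname{Sym}^\nu$. Both $q^*((\operatorname{Sym}^\nu\mathcal F)^{\vee\vee})$ and $(\operatorname{Sym}^\nu q^*\mathcal F)^{\vee\vee}$ are reflexive: the first because flat pullback commutes with duals (as in the proof of \Cref{T:PW-TA}, via \cite{Hart80}). Two reflexive sheaves that agree off codimension $2$ coincide, so
$$q^*\bigl((\operatorname{Sym}^\nu\mathcal F)^{\vee\vee}\bigr)\cong(\operatorname{Sym}^\nu q^*\mathcal F)^{\vee\vee}.$$
Moreover, $\operatorname{rank}\operatorname{Sym}^\nu q^*\mathcal F=r$, since $q$ preserves ranks.

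It remains to handle the line bundle. Some power $\mathcal A^{\otimes e}$ descends to an ample line bundle on $X$, so by \Cref{L:ample} the pullback $q^*\mathcal A$ is ample on $V$. The resulting inclusion
$$\bigoplus^{r} q^*\mathcal A\hookrightarrow(\operatorname{Sym}^\nu q^*\mathcal F)^{\vee\vee}$$
is exactly \eqref{E:bigF-defSch} for $q^*\mathcal F$ on $V$. Hence $q^*\mathcal F$ is big, and therefore $\mathcal F$ is big.

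The only step needing care is the identification of the pulled-back reflexive hull, which rests on the codimension-$2$ argument above. Injectivity and ampleness follow directly from flatness and \Cref{L:ample}.
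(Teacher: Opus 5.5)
Your proof is correct and is essentially the paper's argument. The paper just says to repeat the proof of \Cref{L:big-prpty}\ref{L:big-prpty-ff} with the finite flat cover $q:V\to\mathcal X$: pull back the inclusion, identify $q^*\bigl((\operatorname{Sym}^\nu\mathcal F)^{\vee\vee}\bigr)$ with $(\operatorname{Sym}^\nu q^*\mathcal F)^{\vee\vee}$ by working where $\mathcal F$ is locally free, and use that $q^*\mathcal A$ is ample; your only (harmless) deviation is that you get injectivity directly from exactness of flat pullback rather than from generic flatness plus torsion-freeness.
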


\begin{proof}
The poof  is the same as that of  \Cref{L:big-prpty}\ref{L:big-prpty-ff}, using the finite flat   morphism $q:V\to \mathcal X$.  
\end{proof}

\begin{rem}

Similar to \Cref{R:bigShdesc}, we note that if $\mathcal F$ descends to $F$ on $X$, then  if $F$ is big, then    $\mathcal F$ is strictly big; 
the argument is the same as \Cref{L:big-prpty}\ref{L:big-prpty-ff}, using that the finite morphism $\pi:\mathcal X\to X$ is generically flat (in particular, flat over the smooth locus of $X$; see \S \ref{S:DM-intro}).
\end{rem}

\fi 

 \bibliographystyle{amsalpha}
 \bibliography{mhm_bib}

\end{document}